\providecommand{\Supp}{\operatorname{supp}}                            % support
\providecommand{\supp}{\Supp}
\newcommand{\Va}{{\mathbf{a}}}
\newcommand{\Vb}{{\mathbf{b}}}
\newcommand{\Ve}{{\mathbf{e}}}
\newcommand{\Vf}{{\mathbf{f}}}
\newcommand{\Vg}{{\mathbf{g}}}
\newcommand{\Vp}{{\mathbf{p}}}
\newcommand{\Vr}{{\mathbf{r}}}
\newcommand{\Vs}{{\mathbf{s}}}
\newcommand{\Vv}{{\mathbf{v}}}
\newcommand{\Vx}{{\mathbf{x}}}
\newcommand{\Vy}{{\mathbf{y}}}
\newcommand{\Vz}{{\mathbf{z}}}
\newcommand{\VA}{{\mathbf{A}}}
\newcommand{\VB}{{\mathbf{B}}}
\newcommand{\VI}{{\mathbf{I}}}
\newcommand{\VL}{{\mathbf{L}}}
\newcommand{\VN}{{\mathbf{N}}}
\newcommand{\VP}{{\mathbf{P}}}
\newcommand{\VR}{{\mathbf{R}}}
\newcommand{\VT}{{\mathbf{T}}}
\newcommand{\VX}{{\mathbf{X}}}
\newcommand{\VY}{{\mathbf{Y}}}
\newcommand{\alphabf}{\boldsymbol{\alpha}}
\newcommand{\nubf}{\boldsymbol{\nu}}
\newcommand{\taubf}{\boldsymbol{\tau}}
\newcommand{\varphibf}{\boldsymbol{\varphi}}
\newcommand{\psibf}{\boldsymbol{\psi}}
\newcommand{\Dsf}{\mathsf{D}}
\newcommand{\ksf}{\mathsf{k}}
\providecommand{\Ch}{{\cal H}}
\providecommand{\Ci}{{\cal I}}
\providecommand{\Cl}{{\cal L}}
\providecommand{\Cn}{{\cal N}}
\providecommand{\Cr}{{\cal R}}
\providecommand{\Cx}{{\cal X}}
\providecommand{\Cy}{{\cal Y}}
\providecommand{\Cz}{{\cal Z}}
\providecommand{\bbR}{\mathbb{R}}
\providecommand{\FD}{\mathfrak{D}}
\providecommand{\FT}{\mathfrak{T}}
\providecommand{\Fg}{\mathfrak{g}}
\providecommand{\Fh}{\mathfrak{h}}
\newcommand{\overbar}[1]{\mkern 1.5mu\overline{\mkern-1.5mu#1\mkern-1.5mu}\mkern 1.5mu}
\DeclareMathOperator{\id}{\mathbf{id} }
\newcommand{\ac}{\accentset{\circ}}
\newcommand{\beqn}{\begin{eqnarray*}}
\newcommand{\eeqn}{\end{eqnarray*}}
\newcommand{\ben}{\begin{equation}}
\newcommand{\een}{\end{equation}}
\newcommand{\beq}{\begin{eqnarray}}
\newcommand{\eeq}{\end{eqnarray}}
\newcommand{\benn}{\begin{equation*}}
\newcommand{\eenn}{\end{equation*}}
\newcommand{\orange}[1]{{#1}}
\newtheorem{remark}[theorem]{Remark}
\newtheorem{assumption}[theorem]{Assumption}
\newtheorem{example}[theorem]{Example}
\newtheorem{numexp}[theorem]{Numerical Experiment}
\newcommand{\fractx}[2]{{\textstyle\frac{#1}{#2}}}
\numberwithin{theorem}{section}
\newcommand{\TheTitle}{Weakly-normal basis vector fields in RKHS with an application to shape Newton methods}
\newcommand{\TheShortTitle}{Weakly-normal basis vector fields in RKHS}
\newcommand{\TheAuthors}{A.~Paganini and K.~Sturm}
\headers{\TheShortTitle}{\TheAuthors}
\title{{\TheTitle}\thanks{Submitted to the editors DATE.
%\funding{This work was funded by the Fog Research Institute under contract no.~FRI-454.}
}}
\author{
  Alberto Paganini
  \thanks{University of Oxford \email{paganini@maths.ox.ac.uk}}
  \and
  Kevin Sturm
 \thanks{Johann Radon Institute, \email{kevin.sturm@oeaw.ac.at}}
  }
\begin{document}

\maketitle

% REQUIRED
\begin{abstract}
We construct a space of vector fields that are normal to differentiable curves in the plane.
Its basis functions are defined via saddle point variational problems in
reproducing kernel Hilbert spaces (RKHSs).
First, we study the properties of these basis vector fields and show how to approximate them.
Then, we employ this basis to discretise shape Newton methods and
investigate the impact of this discretisation on convergence rates.
\end{abstract}

% REQUIRED
\begin{keywords}
shape optimisation, shape Newton method, numerical analysis, radial basis functions, reproducing kernel Hilbert spaces	
\end{keywords}

% REQUIRED
\begin{AMS}
 49Q10, 49M15, 93B40, 65Dxx	
\end{AMS}

\section{Introduction}

%Shape optimisation is an active field of research that studies how
%to construct a shape $\Omega$ such that a shape functional $J$ is minimised. 
%An important tool to tackle shape optimisation problems are so-called shape derivatives.
{
Shape optimisation %is a thriving field of research and
studies how
to design a domain $\Omega$ such that a shape functional $J$ is minimised.
Shape optimization problems arise naturally in numerous 
industrial applications. Just to mention a few examples, shape optimization can
be used to improve the design of electrodes \cite{BaCiOfStZa15}, bridges \cite{AlDaFr14}, breakwaters \cite{KeKr16},
optical lenses \cite{PaHaSaHa15}, airplane components \cite{CoRiBe15}, and 3D printers \cite{DaEsFaMi17}.
In light of its wide range of applications, it is not a surprise that several commercial
products offer shape optimisation features.
}

{
Shape optimisation problems are inherently infinite dimensional.
Therefore, any numerical shape optimisation algorithm must rely on a discretisation of the
space of shapes.
%The second key characteristic is that 
Additionally, shape optimization problems are in most instances nonconvex.
Therefore, numerical shape optimisation software commonly relies on iterative
optimisation algorithms.
%Nonconvex optimization problems are commonly solved iteratively.
In particular,
in light of the large number of optimization parameters, it is advantageous to use
optimization strategies that involve derivatives of the functional $J$, like steepest descent
or Newton's algorithm.

The derivative of $J$ with respect to perturbations of $\Omega$ is called its shape derivative
and its technical definition depends on how these domain perturbations are modeled. 
Historically, domain perturbations have been defined in terms of velocity flows or perturbations of the identity \cite{delfourZolesioB,HenrotPierre05, SokolowskiZolesio1}.
Recently, Allaire et al. \cite{AlCaVi16} have introduced domain perturbations
in terms of Hamilton-Jacobi equations. 
These definitions lead to
the same formulas for first-order shape derivatives, but differ for second-order ones.
The equivalence of the three approaches for the first shape-derivative stems from the fact
that each approach constructs domain perturbations from vector fields, and the
Taylor series expansions of these constructions agree to first order.
However, the second-order terms of these series do not match. Therefore,
the corresponding shape Newton methods potentially deliver different shape updates.

Regardless of the definition used, both first- and second-order shape derivatives have a
nontrivial kernel. {For instance, shape derivatives are zero in the direction of vector fields
that vanish on $\partial\Omega$.}
The Hadamard-Zol\'esio theorem and its second-order counterpart \cite{a_NOPI_2002a}
provide a complete characterization of the kernel of shape derivatives.
This information needs to be taken into account to prove convergence rates
of shape Newton methods \cite{sturm3, Schulz1} {(because one must
``quotient'' the kernel from the search space)} and has a direct consequence
for numerical computations: if the discretisation of domain perturbations is not
chosen carefully, the matrix that results by restricting the second-order shape
derivative to the discrete trial space may fail to be positive definite
(it is usually positive semi-definite). In particular, the basis vector fields used to discretise
domain perturbations must necessarily have nonvanishing normal component on the
boundary of $\Omega$.

%Historically, shape derivatives \cite{delfourZolesioB,HenrotPierre05, SokolowskiZolesio1}
%have been defined by flows of vector fields or by perturbations of the identity.
%More recently, Allaire et al. \cite{AlCaVi16} have introduced an alternative definition of shape derivatives
%in terms of Hamilton-Jacobi equations, which coincides with previous derivatives when restricted to normal vector fields.
%The flow and perturbation of identity approach definitions lead to equivalent first derivatives but differing second %ones \cite{delfourZolesioB}. 
%However, not all second derivates are suited to formulate shape Newton methods
%\cite{sturm3, Sc17}.

%Several papers propose Newton-type methods to specific PDE constrained problems with specific domain representations, e.g., star shaped domains; \cite{EPHA1,EP1,EPHASH1,SCSIWE1,SCSIWE2}.
%In \cite{Schulz1}, Schulz proves that, under certain assumptions, a Newton method formulated in shape spaces \cite{RingWirth1} converges quadratically, whereas
%superlinear convergence of a (discrete) shape Newton method
%in Micheletti spaces has been showed in \cite{sturm3}.
%In both papers, the authors have to tackle the problem of
%the shape Newton equation being well-posed only with respect to normal perturbations of the boundary.
{Our goal is to construct basis vector fields that are normal to a given boundary
$\partial\Omega$ and that have good approximation properties.}
The literature contains articles that address this issue and rely
on vector fields with this geometric feature.
In \cite{EPHA1,EP1,EPHASH1}, the authors consider star-shaped domains whose parametrization
formula is known analytically and construct the basis vector fields using this parametrization.
This approach is at the same time ideal for and limited to shape optimization problems
for which all computations can be restricted to the boundary $\partial\Omega$,
because the basis vector fields constructed in \cite{EPHA1,EP1,EPHASH1}
are defined only on $\partial\Omega$.
{To restrict all computations to the boundary, the domain
$\Omega$ must satisfy certain regularity requirements, and if the shape functional
$J$ is constrained to a boundary value problem, its solution must also be sufficiently
smooth. If these conditions are not fulfilled,}
%If 
restricting all computations to $\partial\Omega$ is not possible, and
one needs to employ basis vector fields whose support has
nonzero measure. In practice, it is not difficult to construct such vector fields replacing
$\Omega$ with a triangulation and using finite elements \cite{SCSIWE1,SCSIWE2}.
However, the approximation properties of the constructed space are unclear,
and it is difficult to understand whether the Newton updates are well-defined
in the zero mesh-width limit. 

In our work, we use reproducing kernel Hilbert spaces (RKHSs)
to define a class of basis vector fields that are \emph{weakly-normal} to the boundary of a domain $\Omega$
and whose support has nonzero measure.
We use the term weakly-normal because this geometric property is enforced with the help of
saddle point variational problems. This approach allows us to ensure that the resulting
vector fields are linearly independent and normal to the shape boundary $\partial \Omega$.
Therefore, they can be used to formulate discrete shape Newton methods.
Moreover, these vector fields span a dense subset of the space of normal vector fields,
and it is possible to analyse the error between exact and approximate shape Newton updates.

For simplicity, we restrict the exposition to the two dimensional case.
However, the approach can be adapted to higher dimensions,
although the implementation aspects may become more challenging.

}
%\paragraph{Structure of the paper}
%The paper is organised in two parts.
%
%\Cref{sec:normalVFs} deals with the formulation and the analysis of the proposed
%new class of {weakly-}normal vector fields.
%We study their properties, explain how to approximate them, and analyse the error of this approximation.
%
%\Cref{sec:Newton} deals with a shape {optimization test case, its shape Newton method, and
%the corresponding discretisation based on} %Newton methodand to its discretisation with
%the basis functions introduced in \Cref{sec:normalVFs}.
%%For a particular test case, we
%{We }prove quasi-optimality of the approximate
%shape Newton update {and investigate }%. Moreover, we examine numerically
%the impact of this approximation on the convergence rate of the optimization algorithm.
%
%The theoretical results of this work are supported by numerical experiments performed in \textsc{Matlab}
%and partly based on the Chebfun library \cite{chebfun}.

\section{Normal vector fields in RKHS}\label{sec:normalVFs}

\subsection{RKHSs, Cartesian products, and trace spaces}
We begin with some basic definitions of reproducing kernels.
\begin{definition}
	Let $\mathcal X\subset \VR^2$ be an arbitrary set. A function 
	$\ksf:\mathcal \Cx\times\Cx\to \VR$ is called 
	\emph{reproducing kernel} for the Hilbert space 
	$\left(\Ch(\mathcal X),(\cdot,\cdot)_\Ch\right)$ of functions $f:\Cx\to \VR$, if
	\begin{enumerate}[$(a)$]
		\item 
		$\ksf(\Vx,\cdot)\in \Ch(\Cx)$ for every $\Vx\in \mathcal X$,
		\item
		$(\ksf(\Vx,\cdot), f)_{\Ch} = f(\Vx)	$ for every $\Vx\in \mathcal X$ and every $f\in \Ch(\Cx)$.
	\end{enumerate}
\end{definition}

\begin{definition}
	A kernel $\ksf:\Cx\times \Cx\to \VR$ is said to be positive-definite on $\Cx$ if,
	for every finite subset of pairwise distinct points $\{\Vx_i\}_{i=1}^n\subset\Cx$,
	the matrix 	$(\ksf(\Vx_i,\Vx_j))_{i,j=1}^{n}$ is positive-definite.
\end{definition}

\begin{definition}
	A kernel $\ksf:\Cx\times \Cx\to \VR$ is said to be symmetric if
	\begin{equation*} \ksf(\Vx, \Vy) = \ksf(\Vy, \Vx)  \quad \text{for all } \Vx, \Vy \in \Cx\,.\end{equation*}
\end{definition}

The main reason to consider symmetric positive-definite reproducing kernels is that they
can be used to define certain Hilbert Spaces.

\begin{theorem}
To a positive-definite and symmetric kernel $\ksf:\Cx\times \Cx \to \VR$ defined on an arbitrary set
$\Cx\subset \VR^2$ corresponds a unique reproducing kernel Hilbert space  $(\Ch(\Cx),(\cdot,\cdot)_{\Ch})$ (RKHS),
which is called \emph{native space} of $\ksf$. Note that $(\cdot,\cdot)_{\Ch}$ depends on $\ksf$. 
\end{theorem}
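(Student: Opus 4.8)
The plan is to prove this as the Moore--Aronszajn theorem, by an explicit construction for existence followed by a density argument for uniqueness.

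First I would build a candidate inner-product space from the kernel sections. Set $\Ch_0 := \Span{\ksf(\Vx,\cdot) \,:\, \Vx\in\Cx}$, the space of finite linear combinations $f = \sum_i a_i\,\ksf(\Vx_i,\cdot)$, and define on it the bilinear form
\[
\left(\sum_i a_i\,\ksf(\Vx_i,\cdot),\ \sum_j b_j\,\ksf(\Vy_j,\cdot)\right)_{\Ch} := \sum_{i,j} a_i b_j\,\ksf(\Vx_i,\Vy_j).
\]
The symmetry of $\ksf$ makes this form symmetric. A short computation rewrites its value as $\sum_i a_i\, g(\Vx_i)$, where $g = \sum_j b_j\,\ksf(\Vy_j,\cdot)$, which depends only on $f$ and $g$ and not on the chosen representations; this settles well-definedness. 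Reducing to pairwise distinct points and invoking positive-definiteness of the Gram matrix $(\ksf(\Vx_i,\Vx_j))_{i,j}$ shows that $(f,f)_{\Ch} \ge 0$ with equality only when all coefficients vanish, i.e. only when $f\equiv 0$; hence $(\cdot,\cdot)_{\Ch}$ is a genuine inner product on $\Ch_0$, and the kernel sections at distinct points are linearly independent.

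Next I would record the reproducing property on $\Ch_0$: choosing $g = \ksf(\Vx,\cdot)$ in the formula above yields $(\ksf(\Vx,\cdot), f)_{\Ch} = f(\Vx)$ for every $f\in\Ch_0$. Combined with the Cauchy--Schwarz inequality this gives the pointwise bound $|f(\Vx)| \le \ksf(\Vx,\Vx)^{1/2}\,\N{f}_{\Ch}$. I would then complete $\Ch_0$ and realise the completion as a space of functions on $\Cx$: by this bound every $(\cdot,\cdot)_{\Ch}$-Cauchy sequence in $\Ch_0$ is pointwise Cauchy, hence converges pointwise to a function on $\Cx$. Declaring $\Ch(\Cx)$ to be the set of all such pointwise limits, equipped with the limiting inner product, produces a Hilbert space in which $\Ch_0$ is dense and in which property $(a)$ holds by construction and property $(b)$ persists by continuity of point evaluation.

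For uniqueness, suppose two RKHSs $\Ch_1,\Ch_2$ share the reproducing kernel $\ksf$. The reproducing property forces both inner products to agree with the displayed formula on $\Ch_0$, so they coincide there. Moreover $\Ch_0$ is dense in each $\Ch_i$: if $f\in\Ch_i$ satisfies $(f,\ksf(\Vx,\cdot))_{\Ch_i}=0$ for all $\Vx\in\Cx$, then $f(\Vx)=0$ for all $\Vx$, i.e. $f=0$, so $\Ch_0^{\perp}=\{0\}$. Two Hilbert spaces containing a common dense inner-product subspace on which their inner products agree must coincide, whence $\Ch_1=\Ch_2$ with identical inner products. The main obstacle I anticipate is not any single estimate but the completion step: one must verify that the abstract metric completion of $\Ch_0$ can be identified with an honest space of functions on $\Cx$, rather than with abstract equivalence classes of Cauchy sequences, and that no two distinct functions get glued together in the limit. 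Both points are resolved by the pointwise domination $|f(\Vx)| \le \ksf(\Vx,\Vx)^{1/2}\,\N{f}_{\Ch}$, which makes every evaluation functional continuous and thereby transfers norm limits to consistent pointwise values.
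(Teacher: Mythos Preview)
Your proposal is the standard Moore--Aronszajn construction and is correct. Note, however, that the paper does not supply its own proof of this theorem: it is stated as a well-known foundational result (the existence and uniqueness of the native space of a symmetric positive-definite kernel) and is simply quoted as background, with the reader implicitly referred to standard references such as Wendland's monograph. So there is no proof in the paper to compare against; what you have written is exactly the argument one finds in the literature and would be an appropriate way to fill in the omitted details.
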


There is a variety of symmetric positive-definite reproducing kernels \cite{Wendlandbook}
(each tailored to specific applications).
In this work, we are particularly interested in Wendland kernels with compact support
because, for applications in shape optimisation, compact support is a useful 
%(and sometimes necessary \cite{LaSt16}) feature for
\orange{(it implies sparsity of the
discretised shape Hessian) and sometimes necessary (when part of the
domain is not ``free to move'' \cite{LaSt16})} feature .
In the next example, we list three Wendland kernels and recall their native spaces.

\begin{example}\label{ex:kernel} Classical examples of positive-definite kernels
can be found in \cite{Wendlandbook}. In this work, we are particularly interested in
\begin{equation}\label{eq:k4}
\ksf^\sigma_4(x,y)\coloneqq \left(1-\frac{|x-y|}{\sigma}\right)_+^4\left(4\frac{|x-y|}{\sigma} +1\right), \quad \sigma>0,
\end{equation}
where
\begin{equation*}(\cdot)_+:\VR\to\VR^+_0\,,\quad x\mapsto (x)_+\coloneqq\max(0, x)\,.\end{equation*}
When $\sigma=1$,  \cref{eq:k4} is the lowest order Wendland
kernel with compact support that is
positive-definite on $\VR^d$ (for $d\le 3$) and is of class $C^2$.
When $\Cx=\VR^2$, its native space
is $H^{2.5}(\VR^2)$ \cite[p. 160, Thm. 10.35]{Wendlandbook}. 
When $\Cx=\Omega$ is a bounded smooth domain, its native space is $H^{2.5}(\Omega)$
(because every function $f\in\Ch(\Omega)$ can be extended to a function
$\tilde f\in\Ch(\VR^2) = H^{2.5}(\VR^2)$ \cite[p. 169, Thm. 10.46]{Wendlandbook}).
Similar properties can be proved for the $C^4$-kernel $\ksf^\sigma_6$ and the $C^6$-kernel $\ksf^\sigma_8$,
which are given by \cite{Wendlandbook},
\begin{align}
\label{eq:k6}
\ksf^\sigma_6(x,y)&\coloneqq 
\frac{1}{3}\left(1-\frac{|x-y|}{\sigma}\right)_+^6\left(35\frac{|x-y|^2}{\sigma^2}+18\frac{|x-y|}{\sigma} +3\right), \\
\label{eq:k8}
\ksf^\sigma_8(x,y)&\coloneqq \left(1-\frac{|x-y|}{\sigma}\right)_+^8\left(32\frac{|x-y|^3}{\sigma^3}+25\frac{|x-y|^2}{\sigma^2}
+8\frac{|x-y|}{\sigma} +1\right)\,,
\end{align}
respectively. Their corresponding native spaces in dimension two are $H^{3.5}(\VR^2)$ and 
$H^{4.5}(\VR^2)$, respectively \cite[p. 160, Thm. 10.35]{Wendlandbook}.
\end{example}

Scalar reproducing kernels can be used to define Hilbert spaces of vector valued functions, too \cite[Lemma 3.6]{a_EIST_2017a}.
\begin{proposition}
The Cartesian product $[\Ch(\Cx)]^2\coloneqq \Ch(\Cx)\times\Ch(\Cx)$ of a RKHS
$\Ch(\Cx)$ (with reproducing kernel $\ksf$) is itself a RKHS. Its (matrix valued) reproducing kernel is $\ksf(\cdot,\cdot) \VI$, where $\VI\in\VR^2$ denotes the identity matrix. {Henceforth,
we denote by $(\cdot,\cdot)_{[\Ch]^2}$ the (canonical) inner product of $[\Ch(\Cx)]^2$.}
\end{proposition}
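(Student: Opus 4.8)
The plan is to equip the Cartesian product with its natural sum inner product, recall the appropriate notion of a matrix-valued reproducing kernel for vector-valued functions, and then verify the two defining properties by reducing everything to the scalar reproducing property componentwise. First I would set, for $\Bf=(f_1,f_2)$ and $\Bg=(g_1,g_2)$ in $[\Ch(\Cx)]^2$,
\[
(\Bf,\Bg)_{[\Ch]^2}\coloneqq (f_1,g_1)_{\Ch}+(f_2,g_2)_{\Ch}\,.
\]
Since $\Ch(\Cx)$ is complete, the orthogonal direct sum $[\Ch(\Cx)]^2$ is again a Hilbert space, whose elements I would view as vector-valued functions $\Bf:\Cx\to\VR^2$ with components in $\Ch(\Cx)$. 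I would then recall that a matrix-valued kernel $\VK:\Cx\times\Cx\to\VR^{2\times 2}$ is reproducing for a Hilbert space of functions $\Cx\to\VR^2$ if, for every $\Vx\in\Cx$ and every $\Vc\in\VR^2$, first $\VK(\Vx,\cdot)\Vc$ belongs to the space, and second $(\Bf,\VK(\Vx,\cdot)\Vc)=\Vc^\top\Bf(\Vx)$ for all $\Bf$ in the space. For the candidate $\VK(\Vx,\Vy)=\ksf(\Vx,\Vy)\VI$ one has $\VK(\Vx,\cdot)\Vc=(c_1\ksf(\Vx,\cdot),\,c_2\ksf(\Vx,\cdot))$.

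To verify the first property, I would observe that each component of $\VK(\Vx,\cdot)\Vc$ is a scalar multiple of $\ksf(\Vx,\cdot)$, which lies in $\Ch(\Cx)$ by property $(a)$ of the scalar kernel; hence $\VK(\Vx,\cdot)\Vc\in[\Ch(\Cx)]^2$. To verify the second property, I would expand the product inner product and apply the scalar reproducing identity $(\ksf(\Vx,\cdot),f)_{\Ch}=f(\Vx)$ in each component:
\[
(\Bf,\VK(\Vx,\cdot)\Vc)_{[\Ch]^2}=c_1(f_1,\ksf(\Vx,\cdot))_{\Ch}+c_2(f_2,\ksf(\Vx,\cdot))_{\Ch}=c_1 f_1(\Vx)+c_2 f_2(\Vx)=\Vc^\top\Bf(\Vx)\,.
\]
Continuity of the vector point evaluations then follows from Cauchy--Schwarz, since $|\Vc^\top\Bf(\Vx)|\le \|\Bf\|_{[\Ch]^2}\,\|\VK(\Vx,\cdot)\Vc\|_{[\Ch]^2}$, which confirms that $[\Ch(\Cx)]^2$ is a RKHS with reproducing kernel $\ksf(\cdot,\cdot)\VI$.

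There is no genuine obstacle here: the argument reduces to applying the scalar reproducing property twice. The only point that requires care is stating the correct matrix-valued reproducing identity, namely that pairing $\Bf$ with $\VK(\Vx,\cdot)\Vc$ recovers the scalar $\Vc^\top\Bf(\Vx)$ rather than the vector $\Bf(\Vx)$, since the definitions supplied so far cover only scalar-valued kernels. Once that convention is fixed, the diagonal structure of $\ksf(\cdot,\cdot)\VI$ makes the two components decouple and the verification is immediate.
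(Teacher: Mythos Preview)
Your argument is correct: equipping the product with the sum inner product and verifying the two defining properties of a matrix-valued reproducing kernel componentwise is exactly the right approach, and the computation is clean. The paper itself does not supply a proof of this proposition; it states the result as a standard fact with a citation to Wendland's book, so your write-up in fact fills in what the paper leaves to the reference.
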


Next, we describe how to construct RKHSs on boundaries of bounded subdomains of $\VR^2$.
Let $\ksf$ be a positive-definite kernel on $\VR^2$, and let $\Ch(\VR^2)$ be 
its native space. Let $\Omega\subset \VR^2$ be a nonempty,
bounded, open and smooth set (e.g. $C^1$). 
We denote by $\nubf$ and $\taubf$ the unit normal and unit tangential vector fields along $\partial \Omega$. 

The restriction of $\ksf$ onto $\partial \Omega$ defines a positive-definite kernel
$\ksf\vert_{\partial\Omega}$, which
uniquely characterises the RKHS \cite[p.169]{Wendlandbook}
\begin{equation}
\Ch(\partial \Omega) = \{f|_{\partial \Omega}:\; f\in \Ch(\VR^2)\}\,.
\end{equation}
Similarly, it is possible to define the space of vector fields restricted onto $\partial\Omega$ by
\begin{equation}
[\mathcal H(\partial \Omega)]^2 = \{\Vf|_{\partial \Omega}:\; \Vf\in [\mathcal H(\VR^2)]^2\}.
\end{equation}

The next example identifies the space $\mathcal H(\partial \Omega)$ for the kernel \cref{eq:k4}.
\begin{example}
The native space of the restriction onto $\partial\Omega$ of the Wendland kernel $\ksf_4^\sigma$
(with $\sigma$ fixed)
is $H^2(\partial \Omega)$. This follows from $\Ch(\VR^2) = H^{2.5}(\VR^2)$ and 
the standard Sobolev trace theorem.
\end{example}
We conclude this section by defining the RKHS of normal and tangential vector fields:
\begin{align}\label{eq:Hnu}
[\Ch(\partial \Omega)]^2_{\nubf} &\coloneqq \{\Vf\in[\mathcal H(\VR^2)]^2:\;\Vf\cdot \taubf =0 \quad \text{ on } \partial \Omega  \}\,,\\
[\Ch(\partial \Omega)]^2_{\taubf} &\coloneqq \{\Vf\in[\mathcal H(\VR^2)]^2:\; \Vf\cdot \nubf =0 \quad \text{ on } \partial \Omega  \}.
\end{align}

\subsection{Weakly-normal basis functions}
In this section, we introduce a novel class of normal vector fields.
We begin with the following assumption on the reproducing kernel $\ksf$.

\begin{assumption}\label{ass:one}
Let $k\geq 1$ be an integer and let $\partial \Omega$ be of class $C^{k+1}$. We assume that
$\ksf:\VR^2\times \VR^2 \to \VR$ is a symmetric positive-definite kernel on $\VR^2$
with the property \begin{equation}\label{eq:nomalizedk}
\ksf(\Vx,\Vx)=1 \quad \text{for all } \Vx\in \partial\Omega\,.
\end{equation}
We assume that for every $\Vx\in \VR^2$ we have $\ksf(\Vx,\cdot) \in C^k(\VR^2)$.
Furthermore, we assume that there is a constant $c_{\Omega}>0$, 
such that for every $\phi\in C^k(\partial\Omega)$ and every $f \in \Ch(\partial \Omega)$,
\begin{equation}\label{eq:module}
\phi f\in \Ch(\partial\Omega)\quad \text{and}\quad \|\phi f\|_{\Ch} \le c_{\Omega} \|\phi\|_{C^k(\partial\Omega)}\|f\|_{\Ch} .
\end{equation}

Finally, we assume that the {constant function} %identity map 
$1:\partial \Omega \to\VR$ % \partial \Omega$
belongs to $\Ch(\partial \Omega)$ %$[\Ch(\partial \Omega)]^2$
and, for simplicity, that the support of $\ksf$ is connected.

\end{assumption}
\begin{example}
The kernels $\ksf^\sigma_4, \ksf^\sigma_6$ and $\ksf^\sigma_8$ satisfy \cref{ass:one} for $k=2,3,4$, respectively. 
\end{example}

\begin{remark}\label{rmk:directsum}
Property \cref{eq:module} implies that 
 $C^k(\partial\Omega)\subset\Ch(\partial\Omega)$ (by \cref{eq:module}), and that
$[\Ch(\partial \Omega)]^2 = [\Ch(\partial \Omega)]^2_{\nubf}\oplus [\Ch(\partial \Omega)]^2_{\taubf}$
(because every vector field $\Vv \in [\Ch(\partial \Omega)]^2$ can be decomposed into
$ \Vv = (\Vv \cdot \nubf)\nubf + (\Vv \cdot \taubf) \taubf$ on $\partial \Omega$).
\end{remark}

\begin{definition}\label{def:rx}
For a point $\Vx\in\partial\Omega$, let $(\Vr_\Vx, p_\Vx)\in [\Ch(\partial\Omega)]^2\times \Ch(\partial\Omega)$
be the solution of
\begin{subequations}
\label{eq:saddlepoint}
\begin{align}
(\Vr_\Vx,\varphibf)_{[\Ch]^2}  +(\varphibf \cdot \taubf, p_\Vx )_{\mathcal H}
&= \nubf(\Vx)\cdot \varphibf(\Vx)&& \text{ for all } \varphibf \in [\mathcal H(\partial\Omega)]^2,\label{eq:1RKHS2}\\
\;(\Vr_\Vx\cdot \taubf, \psi)_{\mathcal H}
& =0 &&\text{ for all } \psi \in\mathcal H(\partial\Omega) \label{eq:2RKHS2}\,.
\end{align}
\end{subequations}
The function $\Vr_\Vx$ is the \emph{weakly-normal basis function} associated with $\Vx$.
\end{definition}

{To show that \cref{def:rx} makes sense, we need to prove
that \cref{eq:saddlepoint} admits a unique and stable solution. This is done in
\Cref{lem:rxwelldefined}, which in turn relies on \cref{lem:B_surjective}. Before showing
\cref{lem:B_surjective}, we introduce the following notation:}
henceforth we denote by $\Ve_i, i =1,2,$ the canonical basis of $\VR^2$ and
use the norm
\begin{equation}
\|\Vf\|_{C^k}\coloneqq \sqrt{\Vert \Vf\cdot\Ve_1\Vert_{C^k}^2+\Vert \Vf\cdot\Ve_2\Vert_{C^k}^2}
\quad \text{for all } \Vf\in C^k(\VR^2, \VR^2)\,.
\end{equation}

\begin{lemma}\label{lem:B_surjective}
The operator $B:[\mathcal H(\partial\Omega)]^2 \rightarrow \mathcal H(\partial\Omega)^*$ defined by  $\varphibf \mapsto (\varphibf\cdot \taubf, \cdot)_{\mathcal H}$ is surjective. Moreover, 
\begin{equation}\label{eq:coercivity_Bt}
\|B^*\psi\|_{ ([\Ch]^2)^*}%\Ch^*}
 \ge \frac{\|\psi\|_{\Ch}}{c_{\Omega} \|\taubf\|_{C^k}}  \quad \text{ for all } \psi \in \Ch(\partial\Omega)\,.
\end{equation} 
\end{lemma}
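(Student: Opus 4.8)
The plan is to identify the adjoint $B^*$ explicitly, to prove the coercivity estimate \cref{eq:coercivity_Bt} by testing against a single cleverly chosen vector field, and then to deduce surjectivity of $B$ from the standard equivalence between surjectivity of an operator and coercivity (bounded-belowness) of its adjoint.

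First I would record that $B$ is bounded: since $B\varphibf = (\varphibf\cdot\taubf,\cdot)_{\Ch}$, the Riesz representation gives $\|B\varphibf\|_{\Ch^*} = \|\varphibf\cdot\taubf\|_{\Ch}$, and expanding $\varphibf\cdot\taubf = \varphi_1\tau_1 + \varphi_2\tau_2$ and applying the module property \cref{eq:module} to each summand yields $\|B\varphibf\|_{\Ch^*} \le c_\Omega\|\taubf\|_{C^k}\|\varphibf\|_{[\Ch]^2}$ (note $\taubf\in[C^k(\partial\Omega)]^2$ because $\partial\Omega$ is of class $C^{k+1}$). Accordingly, for $\psi\in\Ch(\partial\Omega)$ the adjoint $B^*\psi$ acts on $[\Ch(\partial\Omega)]^2$ through $\langle B^*\psi,\varphibf\rangle = (\varphibf\cdot\taubf,\psi)_{\Ch}$, so that $\|B^*\psi\|_{\Ch^*} = \sup_{\varphibf\ne 0}|(\varphibf\cdot\taubf,\psi)_{\Ch}|/\|\varphibf\|_{[\Ch]^2}$.

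The decisive step is the choice of test field $\varphibf \coloneqq \psi\,\taubf$. The module property \cref{eq:module}, applied componentwise, guarantees $\psi\,\taubf\in[\Ch(\partial\Omega)]^2$ together with $\|\psi\,\taubf\|_{[\Ch]^2}\le c_\Omega\|\taubf\|_{C^k}\|\psi\|_{\Ch}$ (using the definition of $\|\cdot\|_{C^k}$ from the excerpt). Since $\taubf$ is a unit field, $\varphibf\cdot\taubf = \psi$, so the numerator in the supremum above equals $(\psi,\psi)_{\Ch} = \|\psi\|_{\Ch}^2$. Restricting the supremum to this single $\varphibf$ therefore yields $\|B^*\psi\|_{\Ch^*}\ge \|\psi\|_{\Ch}^2/\|\psi\,\taubf\|_{[\Ch]^2}\ge \|\psi\|_{\Ch}/(c_\Omega\|\taubf\|_{C^k})$, which is precisely \cref{eq:coercivity_Bt}.

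Finally, surjectivity of $B$ follows from \cref{eq:coercivity_Bt}: the estimate shows that $B^*$ is injective and bounded below, hence has closed range, so by the closed range theorem $B$ has closed range as well; since $\overline{\Range{B}}$ equals the annihilator of $\ker B^* = \{0\}$, the range of $B$ is all of $\Ch(\partial\Omega)^*$. The only genuinely substantive point in the whole argument is the test-field trick $\varphibf = \psi\,\taubf$, which collapses the abstract dual norm into the explicit ratio $\|\psi\|_{\Ch}^2/\|\psi\,\taubf\|_{[\Ch]^2}$; the module property \cref{eq:module} is exactly what controls the denominator, and it is there that the $C^{k+1}$-regularity of $\partial\Omega$ (ensuring $\taubf\in[C^k(\partial\Omega)]^2$) enters. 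Everything else is routine functional analysis.
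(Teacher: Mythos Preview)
Your proof is correct and uses the same key idea as the paper: the test field $\varphibf=\psi\,\taubf$ together with the module estimate \cref{eq:module} yields \cref{eq:coercivity_Bt} exactly as in the paper's argument. The only difference is that the paper proves surjectivity of $B$ directly---given $f\in\Ch(\partial\Omega)^*$ with Riesz representative $\eta_f$, the preimage is simply $\eta_f\,\taubf$---whereas you deduce it from \cref{eq:coercivity_Bt} via the closed range theorem; both are valid, and the constructive version is marginally shorter.
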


\begin{proof}
{To show that $B$ is surjective,} let $f\in \Ch(\partial\Omega)^*$. By the Riesz representation theorem, there is a function
$\eta_f\in \Ch(\partial\Omega)$ that satisfies $f(\psi) = (\eta_f, \psi)_\Ch$ for all $\psi\in \Ch(\partial\Omega)$.
Since the function $\varphibf \coloneqq \eta_f\taubf\in [\mathcal H(\partial\Omega)]^2$ satisfies $B\varphibf = f$, the operator $B$ is surjective. To verify \cref{eq:coercivity_Bt}, first note that
inequality \cref{eq:module} implies $\|\taubf\psi\|_{[\Ch]^2}\leq c_{\Omega} \|\taubf\|_{C^k} \|\psi\|_{\Ch}$
for every $\psi \in \Ch(\partial\Omega)$. Thus,
\begin{align*}
\|B^*\psi\|_{ ([\Ch]^2)^*}%\Ch^*}
= \sup_{\substack{\varphibf\in [\Ch(\partial\Omega)]^2 \\
\|\varphibf\|_{ [\Ch]^2}= 1 }}| (\taubf\cdot \varphibf, \psi)_{\Ch}|
&\ge  \left(\frac{ \taubf\cdot(\taubf\psi)}{\|\taubf\psi\|_{[\Ch]^2}}, \psi\right)_{\Ch}
%& = \frac{\|\psi\|_{\Ch}^2 }{\|\taubf\psi\|_{[\Ch]^2}}
\geq \frac{\|\psi\|_{\Ch}}{c_{\Omega} \|\taubf\|_{C^k}}\,.
\end{align*}
\end{proof}

\begin{lemma}\label{lem:rxwelldefined}
Let $\Vx\in \partial \Omega$. The saddle point problem \cref{eq:saddlepoint} admits a unique solution, which satisfies
\begin{equation}\label{eq:apriori_estimates}
\|\Vr_\Vx\|_{[\Ch]^2} \le
\frac{1}{ \Vert \nubf \Vert_{[\Ch]^2}}\,,
\quad \text{and} \quad
\|p_\Vx\|_{\Ch} \le \frac{2 c_{\Omega} \|\taubf\|_{C^k} 
}{ \Vert \nubf \Vert_{[\Ch]^2}}\,.
\end{equation}
\end{lemma}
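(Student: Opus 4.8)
The plan is to recognise \cref{eq:saddlepoint} as a symmetric saddle-point (mixed) problem and to invoke the Brezzi splitting theorem, using \cref{lem:B_surjective} for the inf-sup condition. I would write $a(\Vr,\varphibf):=(\Vr,\varphibf)_{[\Ch]^2}$ and $b(\varphibf,\psi):=(\varphibf\cdot\taubf,\psi)_{\Ch}$, so that $b(\varphibf,\cdot)=B\varphibf$. The form $a$ is exactly the inner product of $[\Ch(\partial\Omega)]^2$, hence bounded and coercive with constant $1$ on the whole space, and a fortiori on the kernel of $B$. The inf-sup condition for $b$ is precisely the coercivity estimate \cref{eq:coercivity_Bt} proved in \cref{lem:B_surjective}, with constant $1/(c_\Omega\|\taubf\|_{C^k})$, and the second equation has zero data, so the hypotheses of Brezzi theory are in place once the right-hand side is shown to be bounded.

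Before applying Brezzi I would check that the functional $\varphibf\mapsto\nubf(\Vx)\cdot\varphibf(\Vx)$ on the right of \cref{eq:1RKHS2} is continuous on $[\Ch(\partial\Omega)]^2$. Since point evaluations are bounded in an RKHS, this holds; more precisely, the reproducing property of the matrix-valued kernel $\ksf(\cdot,\cdot)\VI$ gives $\nubf(\Vx)\cdot\varphibf(\Vx)=(\varphibf,\Vk_\Vx)_{[\Ch]^2}$ with $\Vk_\Vx:=\ksf(\Vx,\cdot)\nubf(\Vx)$, and the normalisation \cref{eq:nomalizedk} together with $|\nubf(\Vx)|=1$ yields $\|\Vk_\Vx\|_{[\Ch]^2}^2=\ksf(\Vx,\Vx)\,|\nubf(\Vx)|^2=1$. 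Coercivity of $a$ on $\ker B$, the inf-sup condition, and this bounded datum then give existence and uniqueness of $(\Vr_\Vx,p_\Vx)$, which is what makes \cref{def:rx} meaningful.

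For the a priori estimates I would proceed by judicious test functions. First, inserting $\psi=\Vr_\Vx\cdot\taubf$ (which lies in $\Ch(\partial\Omega)$ by \cref{eq:module}, as $\taubf\in C^k$) into \cref{eq:2RKHS2} forces $\Vr_\Vx\cdot\taubf=0$, so $\Vr_\Vx$ is genuinely normal and the coupling term drops whenever a normal field is tested in \cref{eq:1RKHS2}. Taking $\varphibf=\Vr_\Vx$ then gives $\|\Vr_\Vx\|_{[\Ch]^2}^2=\nubf(\Vx)\cdot\Vr_\Vx(\Vx)=(\Vr_\Vx,\Vk_\Vx)_{[\Ch]^2}$, while taking $\varphibf=\nubf$ (admissible since $\nubf\in[C^k(\partial\Omega)]^2\subset[\Ch(\partial\Omega)]^2$ by \cref{rmk:directsum}) and using $\nubf\cdot\taubf\equiv0$ yields the exact identity $(\Vr_\Vx,\nubf)_{[\Ch]^2}=|\nubf(\Vx)|^2=1$. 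Combining these two relations with the Cauchy–Schwarz inequality and $\|\Vk_\Vx\|_{[\Ch]^2}=1$ pins down $\|\Vr_\Vx\|_{[\Ch]^2}$ in terms of $\|\nubf\|_{[\Ch]^2}$ and should deliver the first bound in \cref{eq:apriori_estimates}. For $p_\Vx$ I would rewrite \cref{eq:1RKHS2} as $B^*p_\Vx=\Vk_\Vx-\Vr_\Vx$ in $([\Ch(\partial\Omega)]^2)^*$, estimate the residual by $\|\Vk_\Vx\|_{[\Ch]^2}+\|\Vr_\Vx\|_{[\Ch]^2}$, and apply the inf-sup bound \cref{eq:coercivity_Bt} to convert this into the claimed bound on $\|p_\Vx\|_{\Ch}$.

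The hard part will not be well-posedness, which is immediate from Brezzi theory, but the sharp bookkeeping of the constants in \cref{eq:apriori_estimates}: obtaining the precise dependence on $\|\nubf\|_{[\Ch]^2}$ requires exploiting the two exact identities above — especially $(\Vr_\Vx,\nubf)_{[\Ch]^2}=1$, which hinges on the pointwise orthogonality $\nubf\cdot\taubf\equiv0$ — rather than the generic a priori bounds, and then carefully propagating the resulting estimate for $\Vr_\Vx$ into the residual controlling $p_\Vx$. A secondary technical point is the justification that $\Vr_\Vx\cdot\taubf=0$ holds as an element of $\Ch(\partial\Omega)$, so that every coupling term involving a normal test field vanishes; this relies on the module property \cref{eq:module}.
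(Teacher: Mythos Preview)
Your well-posedness argument is correct and coincides with the paper's: both cast \cref{eq:saddlepoint} as a mixed problem with $a$ the $[\Ch]^2$-inner product and $b(\varphibf,\psi)=(\varphibf\cdot\taubf,\psi)_{\Ch}$, invoke the inf-sup estimate from \cref{lem:B_surjective}, and conclude existence and uniqueness from Brezzi theory \cite[Thm.~4.2.1]{BoBrFo13}.

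The gap is in your derivation of the first a priori bound. The two identities you extract give, via Cauchy--Schwarz, only
\[
\|\Vr_\Vx\|_{[\Ch]^2}^2=(\Vr_\Vx,\Vk_\Vx)_{[\Ch]^2}\ \Longrightarrow\ \|\Vr_\Vx\|_{[\Ch]^2}\le\|\Vk_\Vx\|_{[\Ch]^2}=1,
\qquad
(\Vr_\Vx,\nubf)_{[\Ch]^2}=1\ \Longrightarrow\ \|\Vr_\Vx\|_{[\Ch]^2}\ge \frac{1}{\|\nubf\|_{[\Ch]^2}}.
\]
Neither of these, nor any combination, yields the \emph{upper} bound $\|\Vr_\Vx\|_{[\Ch]^2}\le 1/\|\nubf\|_{[\Ch]^2}$ that \cref{eq:apriori_estimates} asserts; the inequality coming from $(\Vr_\Vx,\nubf)_{[\Ch]^2}=1$ points the wrong way. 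Your hedge ``should deliver the first bound'' is precisely where the argument fails. The issue then propagates to $p_\Vx$: bounding the residual by $\|\Vk_\Vx\|_{[\Ch]^2}+\|\Vr_\Vx\|_{[\Ch]^2}\le 1+1/\|\nubf\|_{[\Ch]^2}$ and applying the inf-sup yields $c_\Omega\|\taubf\|_{C^k}\bigl(1+1/\|\nubf\|_{[\Ch]^2}\bigr)$, which is not $2c_\Omega\|\taubf\|_{C^k}/\|\nubf\|_{[\Ch]^2}$ unless $\|\nubf\|_{[\Ch]^2}\le 1$.

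The paper does not argue via test functions here. It invokes the abstract stability bound \cite[Thm.~4.2.3]{BoBrFo13} directly, feeding in the inf-sup constant from \cref{eq:coercivity_Bt} together with the estimate that the dual norm of the right-hand side functional $\varphibf\mapsto\nubf(\Vx)\cdot\varphibf(\Vx)$ is at most $1/\|\nubf\|_{[\Ch]^2}$; both constants in \cref{eq:apriori_estimates} then drop out of the generic Brezzi estimate. The identities you wrote down are exactly the ones the paper exploits later, in parts (b) and (e) of \cref{thm:propsofrx}, but there the bound $\|\Vr_\Vx\|_{[\Ch]^2}\le 1/\|\nubf\|_{[\Ch]^2}$ from the present lemma is taken as input, not rederived.
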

\begin{proof}
Since $B$ is surjective and $(\cdot,\cdot)_{[\Ch]^2}$ is an inner product,
existence and uniqueness follow from the
classical result \cite[Thm 4.2.1, p.224]{BoBrFo13}. The continuity estimates \cref{eq:apriori_estimates} follow
from \cite[Thm 4.2.3, p.228]{BoBrFo13}, \cref{eq:coercivity_Bt}, and the estimate
\begin{equation}
\sup_{\substack{\varphibf \in [\Ch]^2,\\  \|\varphibf\|_{ [\Ch]^2}= 1}}|\nubf(\Vx)\cdot \varphibf(\Vx)|
\le \frac{|\nubf(\Vx)\cdot \nubf(\Vx)|}{ \Vert \nubf \Vert_{[\Ch]^2}} \leq1/ \Vert \nubf \Vert_{[\Ch]^2}\,.
\end{equation}
{Note that $1\in\Ch(\partial\Omega)$ by \cref{ass:one}, and thus $\nubf \in [\Ch(\partial\Omega)]^2$.}
\end{proof}

The following remark provides an alternative definition of the function $\Vr_\Vx$.
\begin{remark}
Let $\Vx\in \partial \Omega$. The solution $\Vr_\Vx\in [\Ch(\partial\Omega)]^2 $ of \cref{eq:saddlepoint}
is the unique minimiser of 
\ben\label{eq:rx_minimiser}
\min_{\substack{ \varphibf\in [\Ch(\partial\Omega)]^2, \\ \varphibf \cdot \taubf=0 \text{ on } \partial \Omega }}
\frac{1}{2} \|\varphibf\|_{[\Ch]^2}^2 - \nubf(\Vx)\cdot \varphibf(\Vx).
\een
\end{remark}
\begin{proof}
By differentiating the Lagrangian functional
\begin{equation}
\Cl:[\Ch(\partial\Omega)]^2\times \Ch(\partial\Omega)\to \VR\,,
\quad (\varphibf,\psi)\mapsto\frac{1}{2} \|\varphibf\|_{[\Ch]^2}^2 - \nubf(\Vx)\cdot \varphibf(\Vx) +
(\varphibf \cdot \taubf,\psi)_\Ch\,,
\end{equation}
it is easy to see that equations \cref{eq:saddlepoint} are the first order optimality conditions of \cref{eq:rx_minimiser}. This is the so-called Lagrange multiplier rule. 
Since $\varphibf \mapsto 1/2\|\varphibf\|_{[\Ch]^2}^2 - \nubf(\Vx)\cdot \varphibf(\Vx)$ is a convex and coercive functional,
 $\Vr_\Vx$ is a minimiser of \cref{eq:rx_minimiser}, where $(\Vr_\Vx,p_\Vx)$ is a solution of the 
 saddle point equation \cref{eq:saddlepoint}.
\end{proof}

In the next theorem, we analyse in details the properties of $\Vr_\Vx$ and show
that, in general, $\Vr_\Vx$ is neither $\nubf(\cdot)\ksf(\Vx,\cdot)$ nor $\nubf(\cdot)$.
To facilitate the interpretation, in \cref{fig:plotrx} we include some plots of
the function $\Vr_\Vx$.
\begin{theorem}\label{thm:propsofrx}
Let $\Vx\in \partial \Omega$. The solution $(\Vr_\Vx, p_\Vx)$ of \cref{eq:saddlepoint} has the following properties.
\begin{enumerate}[$(a)$]

\item \label{it:normal}
$\Vr_\Vx\in[\Ch(\partial \Omega)]^2_{\nubf}$.
\item \label{it:upperboundnorm}
$\|\Vr_\Vx\|_{[\Ch]^2}\le 1$.

\item \label{it:nonzero}
$\|\Vr_\Vx\|_{[\Ch]^2}>0$, and the surface measure of 
$\supp(\Vr_\Vx)$ is strictly greater than zero.

\item \label{it:norm1implies}
The equality $\|\Vr_\Vx\|_{[\Ch]^2} = 1$ holds if and only if $\partial \Omega\cap \supp(\Vr_\Vx)$
is a straight segment.
In particular, $(\Vr_\Vx, p_\Vx)  = (\ksf(\Vx,\cdot) \nubf(\Vx), 0)$.

\item \label{it:prodnuH}
$(\Vr_\Vx,\nubf)_{[\Ch]^2}=1$ and $\|\nubf\|_{[\Ch]^2}\|\Vr_\Vx\|_{[\Ch]^2}= 1$.  In particular,  if $\|\nubf\|_{[\Ch]^2}=1$, then $\partial \Omega$
is a straight segment.

\item \label{it:whatrxisnot}
If $\partial \Omega\cap \supp(\Vr_\Vx)$
is not a straight segment, then
\begin{equation}
\Vr_\Vx\neq\nubf(\cdot) \ksf(\Vx,\cdot) \quad \text{and} \quad \Vr_\Vx\neq\nubf\,.
\end{equation}

\item \label{it:symm}
The basis functions $\Vr_\Vx,\Vr_\Vy$ associated with the points $\Vx, \Vy\in \partial \Omega$
satisfy
\begin{equation}\label{eq:basis_rx_2}
 \Vr_{\Vx}(\Vy)\cdot \nubf(\Vy) = \Vr_{\Vy}(\Vx)\cdot \nubf(\Vx).
\end{equation}
Moreover, for each $\Vx\in \partial \Omega$ there is a neighborhood $U_\Vx$ of $\Vx$
such that $\Vr_\Vx(\Vy)\cdot \nubf(\Vy)>0$ for all $\Vy\in U_\Vx$.
\end{enumerate}
\end{theorem}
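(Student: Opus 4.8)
The plan is to exploit the reproducing property of the vector-valued RKHS $[\Ch(\partial\Omega)]^2$ (whose kernel is $\ksf\VI$) to simplify the right-hand side of \cref{eq:1RKHS2}. Setting $\Vg_\Vx \coloneqq \ksf(\Vx,\cdot)\nubf(\Vx) \in [\Ch(\partial\Omega)]^2$, the reproducing property reads $(\varphibf,\Vg_\Vx)_{[\Ch]^2} = \nubf(\Vx)\cdot\varphibf(\Vx)$ for every $\varphibf$, and the normalization \cref{eq:nomalizedk} gives $\|\Vg_\Vx\|_{[\Ch]^2}^2 = |\nubf(\Vx)|^2\,\ksf(\Vx,\Vx) = 1$. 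First I would establish \ref{it:normal} by testing \cref{eq:2RKHS2} with $\psi = \Vr_\Vx\cdot\taubf$ (which lies in $\Ch(\partial\Omega)$ by the module property \cref{eq:module}), obtaining $\|\Vr_\Vx\cdot\taubf\|_\Ch = 0$. Rewriting \cref{eq:1RKHS2} as $(\Vr_\Vx - \Vg_\Vx,\varphibf)_{[\Ch]^2} = -(\varphibf\cdot\taubf, p_\Vx)_\Ch$ and restricting to normal test fields $\varphibf$ then shows $\Vr_\Vx - \Vg_\Vx \perp [\Ch(\partial\Omega)]^2_{\nubf}$; together with \ref{it:normal} this identifies $\Vr_\Vx$ as the $(\cdot,\cdot)_{[\Ch]^2}$-orthogonal projection of $\Vg_\Vx$ onto $[\Ch(\partial\Omega)]^2_{\nubf}$, a viewpoint I would use repeatedly.

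For \ref{it:upperboundnorm} and \ref{it:nonzero} I would test \cref{eq:1RKHS2} with $\varphibf = \Vr_\Vx$; using \ref{it:normal} to drop the pressure term yields the central identity $\|\Vr_\Vx\|_{[\Ch]^2}^2 = \nubf(\Vx)\cdot\Vr_\Vx(\Vx) = (\Vr_\Vx,\Vg_\Vx)_{[\Ch]^2}$. Cauchy--Schwarz with $\|\Vg_\Vx\|_{[\Ch]^2}=1$ then gives \ref{it:upperboundnorm}. For \ref{it:nonzero}, note that $\Vr_\Vx = 0$ would force $(\Vv,\Vg_\Vx)_{[\Ch]^2} = \Vv(\Vx)\cdot\nubf(\Vx) = 0$ for every normal $\Vv$; but the field $\Vv \coloneqq (\Vg_\Vx\cdot\nubf)\nubf$, which is normal and lies in the space by \cref{eq:module}, satisfies $\Vv(\Vx)\cdot\nubf(\Vx) = \Vg_\Vx(\Vx)\cdot\nubf(\Vx) = 1$, a contradiction. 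Since functions in $\Ch(\partial\Omega)$ are continuous, a nonzero $\Vr_\Vx$ is nonzero on a relatively open set, so $\supp(\Vr_\Vx)$ has positive surface measure.

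Properties \ref{it:norm1implies} and \ref{it:whatrxisnot} I would read off the projection picture. The Pythagorean identity $\|\Vg_\Vx\|^2 = \|\Vr_\Vx\|^2 + \|\Vg_\Vx - \Vr_\Vx\|^2$ shows $\|\Vr_\Vx\|_{[\Ch]^2}=1=\|\Vg_\Vx\|_{[\Ch]^2}$ exactly when $\Vg_\Vx$ already lies in $[\Ch(\partial\Omega)]^2_{\nubf}$, i.e. when $\Vr_\Vx = \Vg_\Vx$ and $p_\Vx = 0$. The field $\Vg_\Vx(\Vy)=\ksf(\Vx,\Vy)\nubf(\Vx)$ is normal precisely when $\nubf(\Vx)\cdot\taubf(\Vy)=0$ at every $\Vy$ with $\ksf(\Vx,\Vy)\neq 0$, which in the plane forces the tangent direction to be constant on $\partial\Omega\cap\supp\ksf(\Vx,\cdot)$, i.e. that arc is a straight segment; there $\supp(\Vr_\Vx)$ coincides with $\partial\Omega\cap\supp\ksf(\Vx,\cdot)$. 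This gives the equivalence in \ref{it:norm1implies}, and \ref{it:whatrxisnot} is its contrapositive, once one observes that each of $\Vr_\Vx=\Vg_\Vx$ and $\Vr_\Vx=\nubf$ forces $\|\Vr_\Vx\|_{[\Ch]^2}=1$ (the latter via \ref{it:prodnuH}).

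Finally, \ref{it:prodnuH} follows by testing \cref{eq:1RKHS2} with $\varphibf=\nubf\in[\Ch(\partial\Omega)]^2$ (admissible since $C^k(\partial\Omega)\subset\Ch(\partial\Omega)$ by \cref{rmk:directsum}): as $\nubf\cdot\taubf=0$ this gives $(\Vr_\Vx,\nubf)_{[\Ch]^2}=1$, and combining Cauchy--Schwarz ($\|\Vr_\Vx\|\|\nubf\|\ge 1$) with the a priori bound $\|\Vr_\Vx\|\le 1/\|\nubf\|$ from \cref{lem:rxwelldefined} forces $\|\Vr_\Vx\|\|\nubf\|=1$; if moreover $\|\nubf\|=1$ the equality case of \ref{it:norm1implies} holds at every $\Vx$, whence $\nubf$ is constant and $\partial\Omega$ straight. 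For \ref{it:symm}, testing \cref{eq:1RKHS2} for $\Vr_\Vx$ with $\varphibf=\Vr_\Vy$ and vice versa, and dropping the pressure terms by normality, yields $\Vr_\Vx(\Vy)\cdot\nubf(\Vy)=(\Vr_\Vx,\Vr_\Vy)_{[\Ch]^2}=\Vr_\Vy(\Vx)\cdot\nubf(\Vx)$; positivity near $\Vx$ follows because the diagonal value equals $\|\Vr_\Vx\|^2>0$ by \ref{it:nonzero} and $\Vy\mapsto\Vr_\Vx(\Vy)\cdot\nubf(\Vy)$ is continuous. The main obstacle is the geometric step in \ref{it:norm1implies}: turning the algebraic equality $\Vr_\Vx=\Vg_\Vx$ into the statement that the relevant arc of $\partial\Omega$ is a straight segment, while carefully tracking $\supp(\Vr_\Vx)$ against $\supp\ksf(\Vx,\cdot)$.
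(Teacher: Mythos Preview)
Your proof is correct and covers all seven items, but it is organized around a different central idea than the paper's. You first recognize $\Vr_\Vx$ as the $[\Ch]^2$-orthogonal projection of $\Vg_\Vx=\ksf(\Vx,\cdot)\nubf(\Vx)$ onto $[\Ch(\partial\Omega)]^2_\nubf$, and then read off most of the properties from this single picture: \ref{it:upperboundnorm} via Cauchy--Schwarz applied once to $(\Vr_\Vx,\Vg_\Vx)_{[\Ch]^2}$, \ref{it:norm1implies} via the Pythagorean identity, and \ref{it:whatrxisnot} as a contrapositive. The paper instead works componentwise: for \ref{it:upperboundnorm} it bounds $|\Vr_\Vx(\Vx)\cdot\Ve_i|$ by $\|\Vr_\Vx\cdot\Ve_i\|_\Ch$ via the scalar reproducing property and then sums; for \ref{it:norm1implies} it analyzes the equality case of these componentwise Cauchy--Schwarz inequalities to deduce $\Vr_\Vx=\ksf(\Vx,\cdot)\alphabf$ for some unit $\alphabf$; and for \ref{it:whatrxisnot} it computes $\|\Vr_\Vx-\nubf\ksf(\Vx,\cdot)\|^2$ and $\|\Vr_\Vx-\nubf\|^2$ directly. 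Your projection viewpoint is more economical and makes the ``if and only if'' in \ref{it:norm1implies} essentially automatic, whereas the paper's hands-on approach avoids introducing the projection language and keeps every step at the level of explicit test functions. The remaining items (\ref{it:normal}, \ref{it:prodnuH}, \ref{it:symm}) are handled essentially the same way in both, and your closing remark about matching $\supp(\Vr_\Vx)$ with $\partial\Omega\cap\supp\ksf(\Vx,\cdot)$ correctly flags the one place where both arguments are a bit loose.
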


\begin{proof}
\cref{it:normal} follows from \cref{eq:2RKHS2}, because
$\Vr_\Vx (\Vy) \cdot \taubf(\Vy) = (\Vr_\Vx \cdot \taubf,\ksf(\Vy, \cdot))_\Ch = 0$ for every
$\Vy\in\partial\Omega$.

To show \cref{it:upperboundnorm}, note that choosing
$\varphibf = \Vr_\Vx$ in \cref{eq:1RKHS2} gives
\begin{equation}\label{eq:rx_square}
\|\Vr_\Vx\|_{[\Ch]^2}^2 = \Vr_\Vx(\Vx)\cdot \nubf(\Vx) \le |\Vr_\Vx(\Vx)| =
\sqrt{|\Vr_\Vx(\Vx)\cdot \Ve_1|^2 + |\Vr_\Vx(\Vx)\cdot \Ve_2|^2}.
\end{equation}
The reproducing kernel property of $\ksf$ and \cref{eq:nomalizedk} imply
\begin{equation}\label{eq:normkisone}
\|\ksf(\Vx,\cdot)\|_{\Ch}^2 = (\ksf(\Vx,\cdot), \ksf(\Vx,\cdot))_{\Ch} = \ksf(\Vx,\Vx) = 1\,.
\end{equation}
Therefore, by the reproducing kernel property of $\ksf$ and Cauchy-Schwarz inequality,
\begin{equation}\label{eq:rxCSineq}
|\Vr_\Vx(\Vx)\cdot \Ve_i| = |(\ksf(\Vx,\cdot),\Vr_\Vx\cdot \Ve_i)_{\Ch}| \le 
\|\ksf(\Vx,\cdot)\|_{\Ch} \|\Vr_\Vx\cdot \Ve_i\|_{\Ch} =  \|\Vr_\Vx\cdot \Ve_i\|_{\Ch}\,,
\end{equation}
and thus,
\begin{equation*} 
|\Vr_\Vx(\Vx)|^2 = |\Vr_\Vx(\Vx)\cdot \Ve_1|^2 + |\Vr_\Vx(\Vx)\cdot \Ve_2|^2
\leq  \|\Vr_\Vx\cdot \Ve_1\|_{\Ch}^2 +  \|\Vr_\Vx\cdot \Ve_2\|_{\Ch}^2 
=  \|\Vr_\Vx\|_{[\Ch]^2}^2.
\end{equation*}
In light of \cref{eq:rx_square}, this yields 
\begin{equation}\label{eq:in_light}
\begin{split}
\|\Vr_\Vx\|_{[\Ch]^2}^2   \le |\Vr_\Vx(\Vx)|
\le \|\Vr_\Vx\|_{[\Ch]^2},
\end{split}
\end{equation}
which, in turn, implies $\|\Vr_\Vx\|_{[\Ch]^2} \le 1$.

To show \cref{it:nonzero}, suppose that $\Vr_\Vx =0$. Then, $p_\Vx = 0$, because 
\cref{eq:1RKHS2} implies that for any $\Vy\in\partial\Omega$
\begin{align*}
p_\Vx(\Vy) &= (\ksf(\Vy,\cdot), p_\Vx )_{\mathcal H} 
= (\ksf(\Vy,\cdot)\taubf(\cdot) \cdot\taubf(\cdot), p_\Vx )_{\mathcal H}\\
&= \nubf(\Vx)\cdot \ksf(\Vy,\Vx)\taubf(\Vx) - (\Vr_\Vx, \ksf(\Vy,\cdot)\taubf(\cdot))_{[\Ch]^2} = 0\,.
\end{align*}
However, $(\Vr_\Vx,p_\Vx) = 0$ does not satisfy \cref{eq:1RKHS2} when the test function
$\varphibf$ is
$\varphibf=\ksf(\Vx,\cdot)\nubf(\cdot)\in[\mathcal H(\partial\Omega)]^2$.
Finally, $\Vr_\Vx\neq \mathbf{0}$ implies that
there is at least a point $\Vy\in \partial \Omega$ such that $\Vr_\Vx(\Vy)\ne 0$.
Since $\Vr_\Vx$ is continuous on $\partial \Omega$, 
 $\supp(\Vr_\Vx)$ contains an
nonempty neighborhood $U_\Vy$ of $\Vy$.  

To show \cref{it:norm1implies}, let us first assume that $\|\Vr_\Vx\|_{[\Ch]^2}=1$.
Then, equation \cref{eq:in_light} implies $|\Vr_\Vx(\Vx)|=\|\Vr_\Vx\|_{[\Ch]^2}$, and since
\begin{equation*}
|\Vr_\Vx(\Vx)\cdot \Ve_1|^2 + |\Vr_\Vx(\Vx)\cdot \Ve_2|^2
=|\Vr_\Vx(\Vx)|^2=\|\Vr_\Vx\|_{[\Ch]^2}^2= \|\Vr_\Vx\cdot \Ve_1\|_{\Ch}^2 + \|\Vr_\Vx\cdot \Ve_2\|_{\Ch}^2\,,
\end{equation*}
equation \cref{eq:rxCSineq} becomes an equality
(because $|\Vr_\Vx(\Vx)\cdot \Ve_i|  \le  \|\Vr_\Vx\cdot \Ve_i\|_{\Ch}$).
In particular, this implies that
\begin{equation*}
|(\ksf(\Vx,\cdot),\Vr_\Vx\cdot \Ve_i)_{\Ch}|  
=\|\ksf(\Vx,\cdot)\|_{\Ch}  \|\Vr_\Vx\cdot \Ve_i\|_{\Ch}\,.
\end{equation*}
For nonzero vectors, Cauchy-Schwarz inequality becomes an equality 
if and only if the two vectors are linearly dependent.
Since $\ksf(\Vx,\cdot)$ and $\Vr_\Vx$ are both nonzero, there is a nonzero vector
$\alphabf\in\VR^2$ such that $\Vr_\Vx(\cdot) = \ksf(\Vx,\cdot)\alphabf$ (in fact,
$\vert \alphabf \vert = 1$ because $|\Vr_\Vx(\Vx)|=1 = \ksf(\Vx, \Vx)$). This, combined
with $\Vr_\Vx\in[\Ch(\partial \Omega)]^2_{\nubf}$ (see \cref{it:normal}), implies that
$\nubf$ restricted to $\partial \Omega\cap \supp(\ksf(\Vx,\cdot))$ is either $\alphabf$
or $-\alphabf$, and thus constant.

On the other hand, note that
\begin{equation*}(\ksf(\Vx,\cdot)\nubf(\Vx), \varphibf)_{[\Ch]^2} = 
\nubf(\Vx)\cdot\Ve_1(\ksf(\Vx,\cdot), \varphibf\cdot\Ve_1)_{\Ch}
+ \nubf(\Vx)\cdot\Ve_2(\ksf(\Vx,\cdot), \varphibf\cdot\Ve_2)_{\Ch}
= \nubf(\Vx)\cdot \varphibf(\Vx)\,,\end{equation*}
which implies that $(\Vr_\Vx, p_\Vx)  = (\ksf(\Vx,\cdot) \nubf(\Vx), 0)$ satisfies \cref{eq:1RKHS2}.
If $\partial \Omega\cap \supp(\Vr_\Vx)$ is a straight segments,
then $\nubf(\Vx)\cdot\taubf(\Vy)=0$ for all $\Vy\in\partial \Omega\cap \supp(\ksf(\Vx,\cdot))$.
Since $\ksf(\Vx,\Vy)=0$ for all $\Vy\not\in\supp(\ksf(\Vx,\cdot))$,
\begin{equation*}(\ksf(\Vx,\cdot)\nubf(\Vx)\cdot\taubf(\cdot), \ksf(\Vy,\cdot))_\Ch = \ksf(\Vx,\Vy)\nubf(\Vx)\cdot\taubf(\Vy) = 0
\quad \text{for every }\Vy\in\partial\Omega\,,\end{equation*}
and Equation \cref{eq:2RKHS2} is also satisfied.
Finally, $\Vert \ksf(\Vx,\cdot)\nubf(\Vx)\Vert^2_{[\Ch]^2} = 
\vert\nubf(\Vx)\vert^2\Vert\ksf(\Vx,\cdot)\Vert^2_{\Ch} = 1$ by \cref{eq:normkisone}.

To show \cref{it:prodnuH}, recall that $1\in[\Ch(\partial\Omega)]^2$ by \cref{ass:one}.
Therefore, $\nubf \in [\Ch(\partial\Omega)]^2$, and replacing by $\varphibf = \nubf$ in
\cref{eq:1RKHS2} shows that $(\Vr_\Vx,\nubf)_{[\Ch]^2}=1$.
So, by Cauchy-Schwarz inequality and \cref{eq:apriori_estimates},
\begin{equation}
1 = (\Vr_\Vx,\nubf)_{[\Ch]^2} \leq \|\nubf\|_{[\Ch]^2}\|\Vr_\Vx\|_{[\Ch]^2} \leq 1\,.
\end{equation}
Moreover, if $\|\nubf\|_{[\Ch]^2}=1$, then $\|\Vr_\Vx\|_{[\Ch]^2}=1$ for every $\Vx\in\partial\Omega$.
By \cref{it:norm1implies}, it follows that $\partial \Omega\cap \supp(\Vr_\Vx)$ is a straight segment.
Since this is the case for every $\Vx\in\partial\Omega$, it follows that $\partial \Omega$ is a straight segment itself.

To show \cref{it:whatrxisnot}, let $\Vy\in \partial \Omega$.
Plugging $\nubf(\cdot) \ksf(\Vy,\cdot)$ into \cref{eq:1RKHS2} shows that
\begin{equation}
(\Vr_\Vx,\nubf(\cdot) \ksf(\Vy,\cdot))_{[\Ch]^2} = \ksf(\Vx,\Vy).
\end{equation}
In particular, $(\Vr_\Vx,\nubf(\cdot) \ksf(\Vx,\cdot))_{[\Ch]^2} = \ksf(\Vx,\Vx) = 1$.
Therefore,
\begin{align}
0\leq\Vert \Vr_\Vx - \nubf(\cdot) \ksf(\Vx,\cdot)\Vert_{[\Ch]^2}^2&= 
\Vert \Vr_\Vx \Vert_{[\Ch]^2}^2 + \Vert \nubf(\cdot) \ksf(\Vx,\cdot)\Vert_{[\Ch]^2}^2 - 2\,\\
&=(\Vert \Vr_\Vx \Vert_{[\Ch]^2}^2 -1)+ (\Vert \nubf(\cdot) \ksf(\Vx,\cdot)\Vert_{[\Ch]^2}^2 -1)\,.
\end{align}
Since \cref{it:norm1implies} implies that
$\Vert \Vr_\Vx \Vert_{[\Ch]^2} < 1$, we conclude that
$\Vert \nubf(\cdot) \ksf(\Vx,\cdot)\Vert_{[\Ch]^2} > 1$, and $\Vr_\Vx\neq\nubf(\cdot) \ksf(\Vx,\cdot)$.
Finally, \cref{it:prodnuH} states that $(\Vr_\Vx,\nubf)_{[\Ch]^2}=1$. Therefore,
\begin{equation}
0\leq\Vert \Vr_\Vx - \nubf\Vert_{[\Ch]^2}^2
=(\Vert \Vr_\Vx \Vert_{[\Ch]^2}^2 -1)+ (\Vert \nubf\Vert_{[\Ch]^2}^2 -1)\,,
\end{equation}
and $\Vr_\Vx\neq\nubf$ because $\Vert \nubf \Vert_{[\Ch]^2} > 1$. 

To show \cref{it:symm},
note that $(\Vr_{\Vx}, \Vr_\Vy)_{[\Ch]^2} = (\Vr_{\Vy}, \Vr_\Vx)_{[\Ch]^2}$.
Therefore, by \cref{eq:1RKHS2},
\begin{equation}
\nubf(\Vx)\cdot\Vr_{\Vy}(\Vx) = (\Vr_{\Vx}, \Vr_\Vy)_{[\Ch]^2} = (\Vr_{\Vy}, \Vr_\Vx)_{[\Ch]^2} = \nubf(\Vy)\cdot\Vr_{\Vx}(\Vy)\,.
\end{equation}
For $\Vx = \Vy$, this implies $\Vr_\Vx(\Vx)\cdot \nubf(\Vx) =\|\Vr_\Vx\|_{[\Ch]^2}^2>0$.
Since $\Vy\mapsto \Vr_\Vx(\Vy)\cdot \nubf(\Vy)$ is continuous, there is a neighborhood $U_\Vx$ of $\Vx$
such $\Vr_\Vx(\Vy)\cdot \nubf(\Vy) >0$ for all $\Vy\in U_\Vx$. 
\end{proof}

\begin{remark}
The vector field $\Vr_\Vx$ depends on the metric induced by the kernel $\ksf$. For instance,
varying the parameters of the Wendland-kernel \cref{eq:k4} results in different $\Vr_\Vx$s; see \cref{fig:plotrx}.
In particular, we observe that the decay of $\Vr_\Vx$ away from $\Vx$ is influenced by the support of the
kernel $k_p^\sigma$.
\end{remark}

{
\begin{remark}
  To extend \cref{def:rx} to the three dimensional (3D) case, notice that any vector field $\varphibf\in[\Ch(\partial\Omega)]^3$ can be rewritten as $\varphibf = (\varphibf\cdot\nubf)\nubf
+ \VP_{\taubf}(\varphibf)$, where $\VP_{\taubf}(\varphibf)\coloneqq
\varphibf - (\varphibf\cdot\nubf)\nubf$. The operator $\VP_{\taubf}:[\Ch(\partial\Omega)]^3
\to[\Ch(\partial\Omega)]^3$ is a linear continuous projection with closed range $[\Ch(\partial\Omega)]^3_{\taubf}$. Therefore, to construct weakly-normal
vector fields in 3D, one can replace \cref{eq:saddlepoint} with the saddle-point problem
\begin{subequations}
\begin{align}\label{eq:rx3d}
(\Vr_\Vx,\varphibf)_{[\Ch]^3}  +(\VP_{\taubf}(\varphibf), \Vp_\Vx )_{[\Ch]^3}
&= \nubf(\Vx)\cdot \varphibf(\Vx)&& \text{ for all } \varphibf \in [\mathcal H(\partial\Omega)]^3\,,\\
\;(\VP_{\taubf}(\Vr_\Vx), \psibf)_{[\Ch]^3}
& =0 &&\text{ for all } \psibf \in[\Ch(\partial\Omega)]^3\,.
\end{align}
\end{subequations}
Similarly to \cref{lem:B_surjective}, one considers the operator
$B:[\mathcal H(\partial\Omega)]^3 \rightarrow ([\Ch(\partial\Omega)]^3)^*$ defined by
$\varphibf \mapsto (\VP_{\taubf}(\varphibf), \cdot)_{[\Ch]^3}$.
Differently from the 2D case, this operator $B$ is not surjective.
However, it has closed range. Therefore, following \cite[Sect. 4.2.4, p.230]{BoBrFo13},
it is straightforward to show that \cref{eq:rx3d} admits a solution. The function
$\Vr_\Vx$ (which is our main target)
is uniquely determined and satisfies a stability condition similar to
\cref{eq:apriori_estimates}, whereas the multiplier $\Vp_\Vx$ is uniquely
defined only as an element of the quotient space $[\mathcal H(\partial\Omega)]^3/\ker{B^*}$.
Finally, it is possible to derive a counterpart of \cref{thm:propsofrx} by repeating its proof
and replacing products of the form $\Vf\cdot \taubf$ with $\VP_{\taubf}(\Vf)$ in the
proof and in the definition \cref{eq:Hnu}.
\end{remark}
}

\begin{figure}[htb!]
\includegraphics[width=0.45\linewidth]{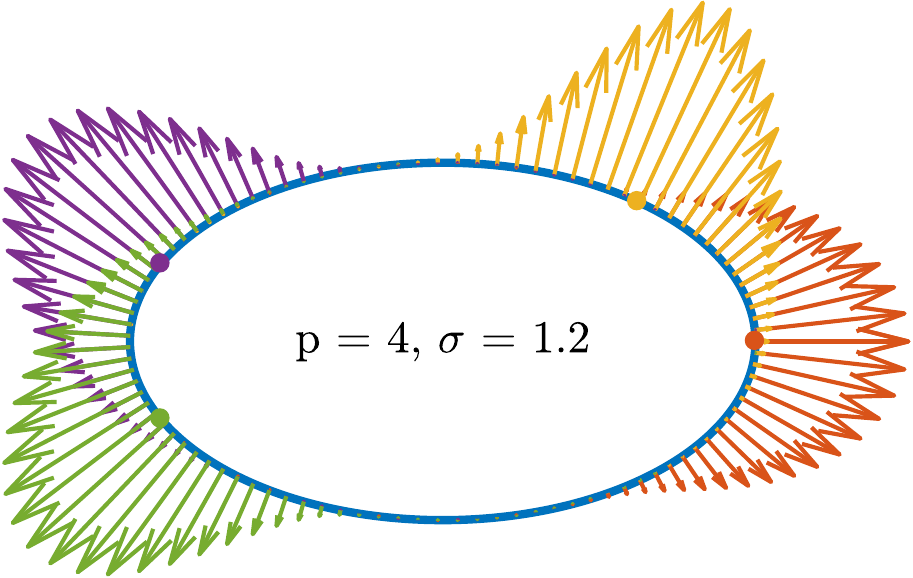}
\hfill
\includegraphics[width=0.45\linewidth]{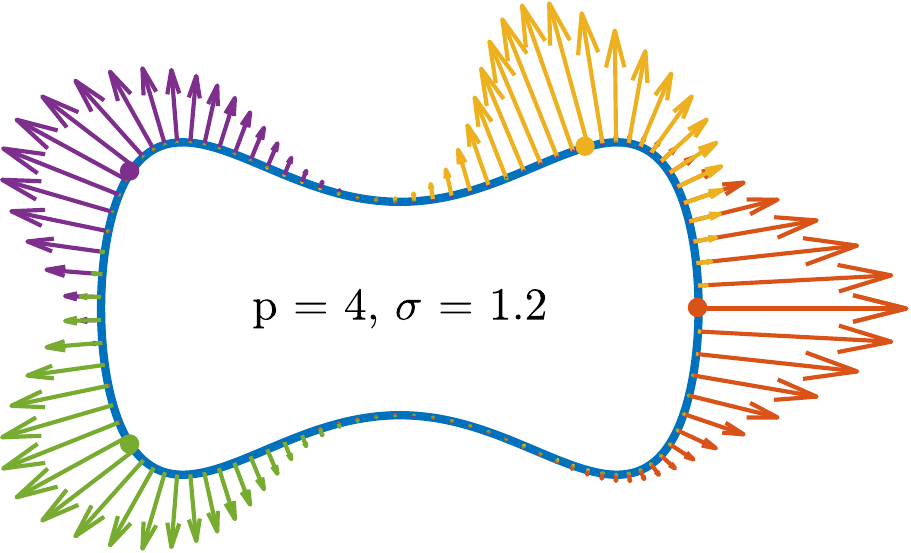}\\
\includegraphics[width=0.45\linewidth]{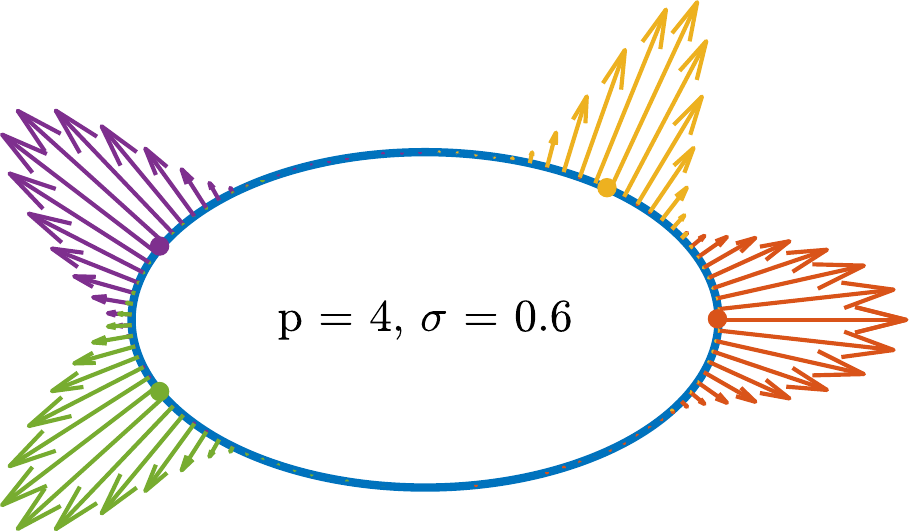}
\hfill
\includegraphics[width=0.45\linewidth]{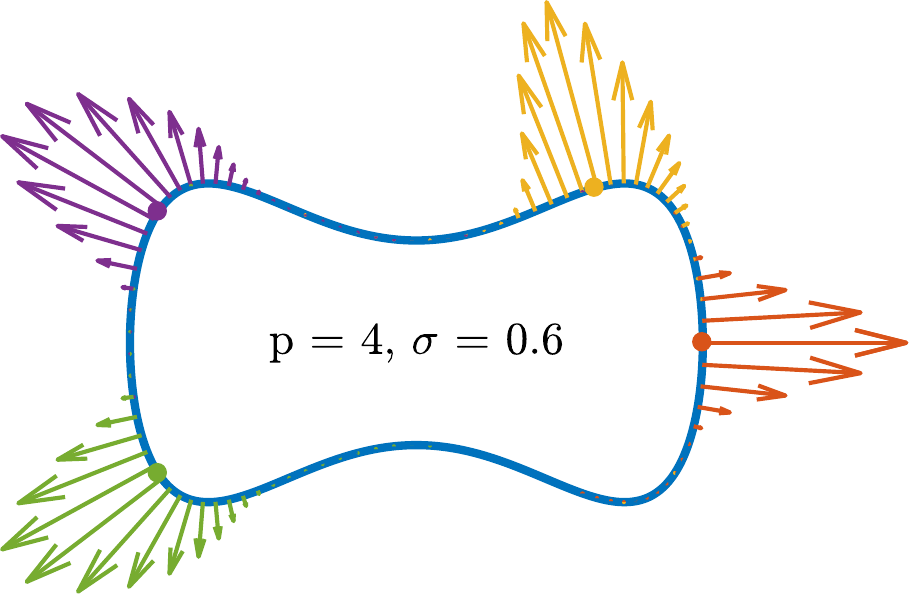}\\
\includegraphics[width=0.45\linewidth]{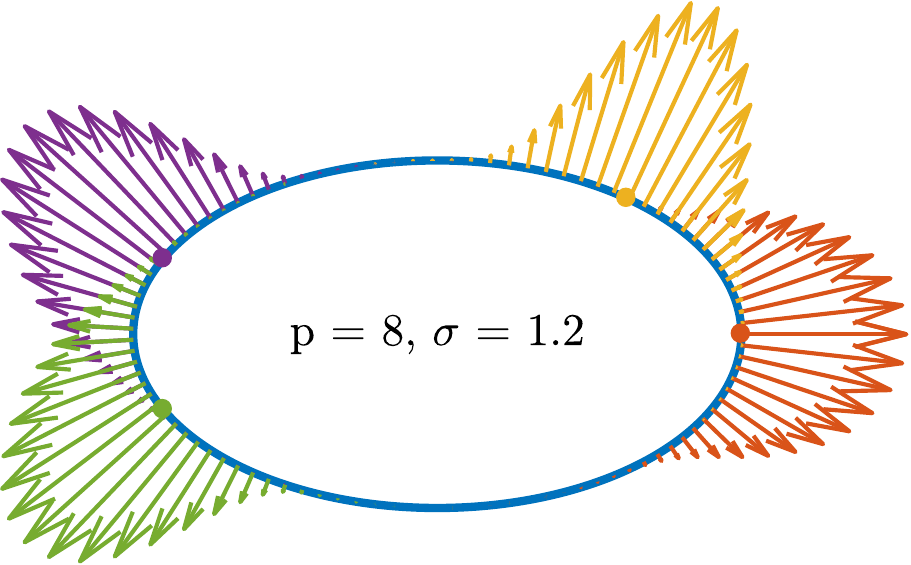}
\hfill
\includegraphics[width=0.45\linewidth]{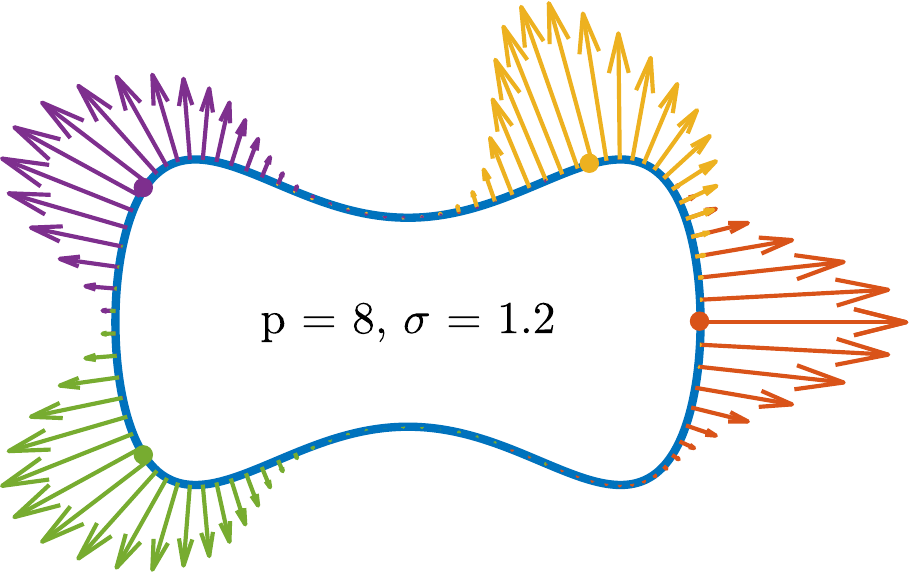}
\caption{Some weakly-normal basis functions based on compactly supported
Wendland kernels \cref{eq:k4} and  \cref{eq:k8} when $\partial\Omega$ is
the ellipse $\{ \gamma(\phi) = (1.4\cos(\phi), 0.8\sin(\phi)): \phi\in[0, 2\pi)\}$ (\emph{left})
and the nonconvex domain
$\{ \gamma(\phi) =(1.4\cos(\phi), 0.8\sin(\phi) + 0.3\sin(3\phi)): \phi\in[0, 2\pi)\}$
(\emph{right}). The dots denote the points to which the weakly-normal basis functions
are associated with.
We observe that different choices of kernel parameters result in different weakly-normal
basis functions.}
\label{fig:plotrx}
\end{figure}

\subsection{Density of weakly-normal functions}
In this section we study the approximation properties of the functions
$\Vr_\Vx$s with respect to $[\Ch(\partial \Omega)]^2_{\nubf}$.

Let $N\ge 1$ be an integer and let $\Cx_N \coloneqq \{\Vx_1,\ldots, \Vx_N \}\subset \partial \Omega$
be a collection of pairwise distinct points.
\begin{lemma}\label{lem:linearindepenent}
The functions $\Vr_{\Vx_1},\ldots, \Vr_{\Vx_N}$ associated with the points in $\Cx_N$
are linearly independent.
\end{lemma}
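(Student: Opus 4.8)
The plan is to show linear independence of $\Vr_{\Vx_1},\ldots,\Vr_{\Vx_N}$ by exploiting the reproducing/duality structure built into the saddle point problem \cref{eq:saddlepoint}. Suppose $\sum_{i=1}^N c_i \Vr_{\Vx_i} = \mathbf 0$ in $[\Ch(\partial\Omega)]^2$ for scalars $c_i\in\VR$; I want to conclude $c_i=0$ for all $i$. The natural idea is to test this relation against a suitable family of functions and use the defining equation \cref{eq:1RKHS2} to convert inner products with $\Vr_{\Vx_i}$ into point evaluations of the normal component.

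The key computation is this: for any $\Vy\in\partial\Omega$, plug the test field $\varphibf = \ksf(\Vy,\cdot)\nubf(\cdot)\in[\Ch(\partial\Omega)]^2$ into \cref{eq:1RKHS2}. As already observed in the proof of \cref{thm:propsofrx}\cref{it:whatrxisnot}, this yields
\begin{equation*}
(\Vr_{\Vx_i},\nubf(\cdot)\ksf(\Vy,\cdot))_{[\Ch]^2} = \nubf(\Vy)\cdot\nubf(\Vy)\,\ksf(\Vx_i,\Vy) = \ksf(\Vx_i,\Vy),
\end{equation*}
using $|\nubf(\Vy)|=1$ and the reproducing property. Pairing the assumed relation $\sum_i c_i\Vr_{\Vx_i}=\mathbf 0$ with $\nubf(\cdot)\ksf(\Vy,\cdot)$ and using linearity then forces
\begin{equation*}
\sum_{i=1}^N c_i\,\ksf(\Vx_i,\Vy) = 0 \quad\text{for every }\Vy\in\partial\Omega.
\end{equation*}
Evaluating this at the nodes $\Vy=\Vx_j$, $j=1,\ldots,N$, gives the linear system $\Ksf\Vc = \mathbf 0$, where $\Ksf = (\ksf(\Vx_i,\Vx_j))_{i,j=1}^N$ is the kernel Gram matrix and $\Vc=(c_1,\ldots,c_N)^\top$.

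The final step invokes the hypothesis that $\ksf$ is positive-definite on $\VR^2$ (hence on $\partial\Omega$): since the points $\Vx_1,\ldots,\Vx_N$ are pairwise distinct, the matrix $\Ksf$ is positive-definite and therefore invertible, so $\Vc=\mathbf 0$, i.e. all $c_i=0$. This proves linear independence. I expect the only delicate point to be confirming that $\varphibf=\ksf(\Vy,\cdot)\nubf(\cdot)$ is indeed an admissible test function in $[\Ch(\partial\Omega)]^2$; this follows from \cref{ass:one}, since $\ksf(\Vy,\cdot)\in C^k$ and the components of $\nubf$ lie in $C^k(\partial\Omega)$, so the module property \cref{eq:module} guarantees the product belongs to $\Ch(\partial\Omega)$ componentwise. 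Everything else reduces to the positive-definiteness of $\Ksf$, which is immediate from the standing assumptions — so there is no real obstacle beyond this bookkeeping.
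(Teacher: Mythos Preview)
Your proof is correct and follows essentially the same approach as the paper: test the relation $\sum_i c_i\Vr_{\Vx_i}=\mathbf 0$ against purely normal fields built from the kernel (so that the Lagrange multiplier term in \cref{eq:1RKHS2} drops out), reduce to $\sum_i c_i\ksf(\Vx_i,\Vx_j)=0$, and conclude by positive-definiteness of the kernel Gram matrix. The only difference is cosmetic: the paper first sums the saddle point equations and then tests with $\varphibf=(\sum_i c_i\ksf(\Vx_i,\cdot))\nubf$, whereas you test with each $\ksf(\Vy,\cdot)\nubf$ separately (as already done in the proof of \cref{thm:propsofrx}\cref{it:whatrxisnot}) and then sum; also, in your displayed identity the factor should read $\nubf(\Vx_i)\cdot\nubf(\Vx_i)$ rather than $\nubf(\Vy)\cdot\nubf(\Vy)$, though this is harmless since $|\nubf|=1$ everywhere.
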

\begin{proof}
Let $\gamma_1,\ldots, \gamma_N\in \VR$ be such that
$\Vr\coloneqq \sum_{i=1}^N \gamma_i \Vr_{\Vx_i} = 0$.
Setting $p \coloneqq \sum_{i=1}^N \gamma_i p_{\Vx_i}$
and summing \cref{eq:1RKHS2} over $i=1,\ldots, N$ yields
\begin{equation}\label{eq:p_var}
(\varphibf\cdot \taubf,p)_{\mathcal H} = \sum_{i=1}^N \gamma_i\nubf(\Vx_i)\cdot \varphibf(\Vx_i) = \bigg(\sum_{i=1}^N \gamma_i \ksf(\Vx_i,\cdot),\, \nubf\cdot \varphibf\bigg)_{\mathcal H}
\quad \text{for all }\varphibf\in [\Ch(\partial \Omega)]^2\,.
\end{equation} 
Choosing
$\varphibf = (\sum_{i=1}^N \gamma_i k(\Vx_i,\cdot))\nubf \in[\Ch(\partial \Omega)]_{\nubf}^2$
(cf. \cref{ass:one}), yields
\begin{equation}
\Vert\sum_{i=1}^N \gamma_i \ksf(\Vx_i,\cdot)\Vert_{\mathcal H}^2 = 0\,,
\quad\text{and thus,}\quad \sum_{i=1}^N \gamma_i \ksf(\Vx_i,\Vy) =0
\quad \text{ for all } \Vy\in \VR^2.
\end{equation}
In particular, this implies that
\begin{equation}
\sum_{i=1}^N \gamma_i \ksf(\Vx_i,\Vx_j) =0
\quad \text{ for all } \Vx_j \in \Cx_N,
\end{equation}
and thus, $\gamma_1=\cdots = \gamma_N=0$, because $\ksf$ is positive-definite on $\VR^2$.
Therefore, the functions
$\Vr_{\Vx_1},\ldots, \Vr_{\Vx_N}$	are linearly independent.
\end{proof}

\Cref{lem:linearindepenent} allows the definition of the interpolation operator onto
$\Cr_N\coloneqq\mathrm{span}\{\Vr_{\Vx}:\Vx\in\Cx_N\}$. The following lemma and remark
clarify the approximation properties of $\Cr_N$ on $[\Ch(\partial \Omega)]^2$.

\color{black}
\begin{lemma}\label{lem:best_approximation}
The interpolation operator $[\Ch(\partial \Omega)]^2_{\nubf}\ni\Vr\mapsto \Vr_N\in\Cr_N$,
defined by 
\begin{equation}\label{eq:interpolation}
\Vr(\Vx) = \Vr_N(\Vx) \quad \text{for every } \Vx\in\Cx_N
\end{equation}
is an $[\Ch]^2$-orthogonal projection.
\end{lemma}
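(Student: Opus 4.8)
The plan is to reduce everything to a single reproducing-type identity for normal vector fields, after which both the well-posedness of the interpolant and its identification with the orthogonal projection follow by short computations.

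First I would establish that for every $\Vr\in[\Ch(\partial\Omega)]^2_{\nubf}$ and every $\Vx\in\partial\Omega$,
\[
(\Vr_\Vx,\Vr)_{[\Ch]^2}=\nubf(\Vx)\cdot\Vr(\Vx).
\]
This is obtained by testing \cref{eq:1RKHS2} with $\varphibf=\Vr$: since $\Vr\cdot\taubf=0$ on $\partial\Omega$, the term $(\varphibf\cdot\taubf,p_\Vx)_{\Ch}$ vanishes, leaving exactly the claimed formula. In words, the inner product of a weakly-normal basis function $\Vr_\Vx$ with any normal field reproduces the normal component of that field at $\Vx$.

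Next I would use this identity to settle well-posedness. Writing the interpolant as $\Vr_N=\sum_{i=1}^N c_i\Vr_{\Vx_i}$ and recalling that each $\Vr_{\Vx_i}$ is normal (\cref{it:normal}), the interpolation conditions $\Vr_N(\Vx_j)=\Vr(\Vx_j)$ are vector equalities between fields parallel to $\nubf(\Vx_j)$; hence they are equivalent to equating normal components, i.e.
\[
\sum_{i=1}^N c_i\,\big(\Vr_{\Vx_i}(\Vx_j)\cdot\nubf(\Vx_j)\big)=\Vr(\Vx_j)\cdot\nubf(\Vx_j),\qquad j=1,\dots,N.
\]
By the identity above the coefficient matrix has entries $\Vr_{\Vx_i}(\Vx_j)\cdot\nubf(\Vx_j)=(\Vr_{\Vx_i},\Vr_{\Vx_j})_{[\Ch]^2}$, so it is precisely the Gram matrix of $\{\Vr_{\Vx_1},\dots,\Vr_{\Vx_N}\}$. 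Since these functions are linearly independent by \cref{lem:linearindepenent}, this Gram matrix is symmetric positive-definite, whence $\Vr_N$ exists and is unique.

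Finally, to identify the interpolation operator with the $[\Ch]^2$-orthogonal projection onto $\Cr_N$, I would check that the residual is orthogonal to the span. The field $\Vr-\Vr_N$ lies in $[\Ch(\partial\Omega)]^2_{\nubf}$, so applying the identity with $\Vx=\Vx_j$ gives
\[
(\Vr_{\Vx_j},\Vr-\Vr_N)_{[\Ch]^2}=\nubf(\Vx_j)\cdot\big(\Vr(\Vx_j)-\Vr_N(\Vx_j)\big)=0,
\]
the last equality being the interpolation condition. As the $\Vr_{\Vx_j}$ span $\Cr_N$, this yields $\Vr-\Vr_N\perp\Cr_N$; together with $\Vr_N\in\Cr_N$ this is exactly the characterisation of $\Vr_N$ as the orthogonal projection of $\Vr$. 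The only genuinely nontrivial step is the reproducing identity of the first paragraph; the subtle point to handle with care is that the interpolation conditions are a priori vector equations in $\VR^2$, and one must invoke the normality of both $\Vr$ and the basis functions to reduce them to scalar conditions on normal components, without which the link to the Gram matrix and hence to the orthogonal projection would not be transparent.
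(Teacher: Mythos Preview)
Your proof is correct and follows essentially the same route as the paper: both hinge on the reproducing identity $(\Vr_\Vx,\Vr)_{[\Ch]^2}=\nubf(\Vx)\cdot\Vr(\Vx)$ for normal fields (obtained by testing \cref{eq:1RKHS2} and using $\Vr\cdot\taubf=0$), and then deduce orthogonality of the residual from the interpolation conditions. Your version is more complete in that you explicitly address well-posedness of the interpolant via the Gram matrix, a point the paper leaves implicit.
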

\begin{proof}
First of all, recall that $\Vr_{\Vx}\in[\Ch(\partial \Omega)]^2_{\nubf}$ for every $\Vx\in\Cx_N$.
Therefore, $\Vr_N \in [\Ch(\partial \Omega)]^2_{\nubf}$. For $\Vx\in \Cx_N$ fixed, using $\Vr$ and $\Vr_N$ as a test functions in \cref{eq:1RKHS2}
gives
\begin{equation}
(\Vr_\Vx,\Vr)_{[\Ch]^2} = \nubf(\Vx)\cdot \Vr(\Vx) \stackrel{\cref{eq:interpolation}}{=} 
\nubf(\Vx)\cdot\Vr_N(\Vx) = (\Vr_\Vx,\Vr_N)_{[\Ch]^2}\,.
\end{equation}
Therefore,
\begin{equation}\label{eq:orthogonality}
(\Vr_\Vx, \Vr-\Vr_N)_{[\Ch]^2}=0 \quad \text{ for all } \Vx\in \Cx_N.
\end{equation}
\end{proof}
\begin{remark}\label{rmk:best_approximation}
\cref{lem:best_approximation} implies that $\Vr_N$ is the best approximation
of $\Vr$ in $\Cr_N$ with respect to the $[\Ch]^2$-norm,
and that $\Vert \Vr_N \Vert_{[\Ch]^2} \leq \Vert \Vr\Vert_{[\Ch]^2}$.
\end{remark}

We conclude the section by showing that any function in $[\Ch(\partial \Omega)]^2_{\nubf}$
can be approximated arbitrarily well using sufficiently many $\Vr_\Vx$s.
\begin{theorem}\label{thm:rxaredensesubset}
Let $\{\Cx_N\}_{N\in\VN}$ be a nested sequence of
finite subsets (that contain pairwise distinct points) of $\partial \Omega$
such that the union $\cup_{N\in\VN}\Cx_N$ is dense in $\partial \Omega$.
The set	$\cup_{N\ge 1}\Cr_N$ is dense in
$[\Ch(\partial \Omega)]^2_{\nubf}$.
\end{theorem}
\begin{proof}

For an arbitrary $\Vr\in [\Ch(\partial \Omega)]^2_{\nubf}$, denote
by $\Vr_N$ its interpolant on $\Cr_N$
(see \cref{lem:best_approximation}).
The sequence $\{\Vr_N\}_{N\in\VN}$ is bounded in $[\Ch(\partial \Omega)]^2$ because
$\Vert \Vr_N \Vert_{[\Ch]^2} \leq \Vert \Vr\Vert_{[\Ch]^2}$ for every $N$.
Therefore, there is a subsequence $\{\Vr_{N_i}\}_{i\in\VN}$
that converges  weakly to a function $\Vf\in [\Ch(\partial \Omega)]^2_{\nubf}$.
By Mazur's Lemma \cite[p.61]{TemanEkeland99}, there is a sequence $\{\hat\Vr_{n}\}_{n\in\VN}$
of convex combinations of $\{\Vr_{n_i}\}_{i\in\VN}$ that converges
strongly in $[\Ch(\partial \Omega)]^2$ to $\Vf$.
Note that there is a function $\Cn:\VN \to \VN$ such that $\Cn(n)\ge n$ and  $\hat\Vr_{n} \in  
\Cr_{\Cn(n)}$ for every $n\in\VN$.

We recall that in RKHS strong convergence implies pointwise convergence. Therefore,
for every $\Vz\in\partial \Omega$,
\begin{equation}\label{eq:conv_pointwise}
\hat \Vr_{n}(\Vz) \to \Vf(\Vz) \quad \text{ as } n\to \infty\,.
\end{equation}
Additionally,
\begin{equation}
\hat\Vr_{n}(\Vz) = \Vr(\Vz) \quad \text{for every } \Vz\in \Cx_{\Cn(n)}\,,
\end{equation}
because $\hat\Vr_{n}$ is a convex combination of interpolants. Therefore, 
$\Vf = \Vr$ on $\cup_{n\in\VN}\Cx_{\Cn(n)}$ and in turn, since $\cup_{n\in\VN}\Cx_{\Cn(n)}$ is dense in $\partial\Omega$ and $\Vf$ and $\Vr$ are continuous, we conclude $\Vf = \Vr$ on $\partial \Omega$. 
It follows that  $\hat \Vr_{n}$ converges strongly to $\Vr$.
\end{proof}
%%%%%%%%%%%%%%%%%%%%%%%%%%%%%%%%%%%%%%%%%%%%%%%%%%%%%%%%
%%%%%%%%%%%%%%%%%%%%%%%%%%%%%%%%%%%%%%%%%%%%%%%%%%%%%%%%
\subsection{Approximation of weakly-normal basis functions}
The functions $\Vr_\Vx$s introduced by \cref{def:rx} live in the infinite dimensional space
$[\Ch(\partial \Omega)]^2$. In this section, we explain how to approximate them using
finitely many collocation points and analyse the numerical error of this approximation.

In this section, the sets $\mathcal X = \{\Vx_1,\ldots, \Vx_N\}$ and $\mathcal Y = \{\Vy_1,\ldots, \Vy_M\} $, $M\le N$,  
denote two subset of $\partial \Omega$ (containing pairwise distinct points) with $\Cy \subset\Cx$.
These sets are used to define the finite dimensional spaces
\begin{equation}\label{eq:V_h}
V_h(\Cx) = \mathrm{span}\{\ksf(\Vx, \cdot)|_{\partial \Omega}: \Vx\in \Cx\}
\quad \text{and} \quad V_h(\Cy) = \mathrm{span}\{\ksf(\Vy, \cdot)|_{\partial \Omega}: \Vy\in \Cy\}.
\end{equation}
Finally, we denote by $\Ci_{V_h}:\Ch(\partial \Omega) \to V_h(\Cx)$ and
$\Ci_{[V_h]^2}:[\Ch(\partial \Omega)]^2 \to [V_h(\Cx)]^2$ the standard pointwise interpolation operators.

\begin{definition}\label{def:rxh}
Let $\Vx\in \partial \Omega$. The finite dimensional approximation
$(\Vr_\Vx^h,p_\Vx^h)\in[V_h(\Cx)]^2\times [V_h(\Cy)]$ of $(\Vr_\Vx, p_\Vx)$ is characterised by
\begin{subequations}
\label{eq:saddlepointdiscr}
\begin{align}
(\Vr_\Vx^h,\varphibf)_{[\Ch]^2}  +(\varphibf \cdot \taubf, p_\Vx^h )_{\mathcal  H}
&= \nubf(\Vx)\cdot \varphibf(\Vx)&& \text{ for all } \varphibf \in [V_h(\mathcal X)]^2,\label{eq:1RKHSh2}\\
\;(\Vr_\Vx^h\cdot \taubf, \psi)_{\mathcal H}
& =0 &&\text{ for all } \psi \in V_h(\mathcal Y) \label{eq:2RKHSh2}\,.
\end{align}
\end{subequations}
\end{definition}

\Cref{thm:uniquenessdiscr} shows that \cref{def:rxh} is well defined.
Its proof relies on \cref{lem:infsup},
which is discrete counterpart of \cref{lem:B_surjective}.
To prove this lemma, we use the following properties of the interpolation operators
$\Ci_{V_h}$ and $\Ci_{[V_h]^2}$.

\begin{lemma}
The interpolation operator $\Ci_{V_h}$
is an $\Ch$-orthogonal projection.
\end{lemma}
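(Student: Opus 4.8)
The plan is to prove that the interpolation operator $\Ci_{V_h}:\Ch(\partial\Omega)\to V_h(\Cx)$ is an $\Ch$-orthogonal projection by establishing the single orthogonality relation
\begin{equation*}
(\Ci_{V_h}f - f, v)_{\Ch} = 0 \quad \text{ for all } v\in V_h(\Cx)\,,
\end{equation*}
which, combined with $\Ci_{V_h}$ mapping into $V_h(\Cx)$ and fixing $V_h(\Cx)$ pointwise (hence acting as the identity there), characterises it as the orthogonal projection onto $V_h(\Cx)$.

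First I would recall that $V_h(\Cx)=\mathrm{span}\{\ksf(\Vx_i,\cdot):\Vx_i\in\Cx\}$, so by linearity it suffices to verify the orthogonality relation against each basis function $v=\ksf(\Vx_j,\cdot)$. The key tool is the reproducing property from the definition of the kernel: for any $g\in\Ch(\partial\Omega)$ one has $(\ksf(\Vx_j,\cdot),g)_{\Ch}=g(\Vx_j)$. Applying this with $g=\Ci_{V_h}f - f$ gives
\begin{equation*}
(\ksf(\Vx_j,\cdot),\,\Ci_{V_h}f - f)_{\Ch} = (\Ci_{V_h}f)(\Vx_j) - f(\Vx_j)\,.
\end{equation*}
Now the defining property of the pointwise interpolation operator $\Ci_{V_h}$ is exactly that $(\Ci_{V_h}f)(\Vx_i)=f(\Vx_i)$ at every collocation point $\Vx_i\in\Cx$; in particular the right-hand side vanishes for each $\Vx_j\in\Cx$. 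Hence $(\ksf(\Vx_j,\cdot),\Ci_{V_h}f - f)_{\Ch}=0$ for every $j$, and by linearity the orthogonality relation holds for all $v\in V_h(\Cx)$.

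Two bookkeeping points close the argument. One must first note that $\Ci_{V_h}f$ is well defined and unique in $V_h(\Cx)$: since $\ksf$ is positive-definite on $\VR^2$ and the points of $\Cx$ are pairwise distinct, the collocation (Gram) matrix $(\ksf(\Vx_i,\Vx_j))_{i,j}$ is positive-definite and hence invertible, so interpolation from $V_h(\Cx)$ at the nodes $\Cx$ is uniquely solvable. Since $\Ci_{V_h}$ maps into $V_h(\Cx)$ and reproduces every element of $V_h(\Cx)$ (a function in $V_h(\Cx)$ is its own interpolant, by uniqueness), we get $\Ci_{V_h}^2=\Ci_{V_h}$, and the orthogonality relation above upgrades this idempotent onto $V_h(\Cx)$ to the $\Ch$-orthogonal projection.

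I do not expect any genuine obstacle here: the whole statement is essentially a restatement of the reproducing property together with the nodal interpolation conditions. The only place that requires a moment's care is ensuring the interpolant exists and is unique, which is where positive-definiteness of $\ksf$ and pairwise distinctness of the nodes in $\Cx$ enter; everything else is a direct computation with the inner product.
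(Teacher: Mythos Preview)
Your proof is correct and follows essentially the same route as the paper: test the interpolation error against the kernel basis functions $\ksf(\Vx_j,\cdot)$, invoke the reproducing property to turn the inner product into a point evaluation, and use the interpolation condition to make this vanish. The paper's argument is slightly more terse and omits your bookkeeping remarks on well-definedness and idempotence, but the substance is identical.
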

\begin{proof}
Let $\Vx$ be a point from $\Cx$, and choose a $\psi \in \Ch[\partial\Omega]$.
Since $\Vx$ is an interpolation point, it holds that
\begin{equation}
(\Ci_{V_h}(\psi), \ksf(\Vx, \cdot))_\Ch = \Ci_{V_h}(\psi)(\Vx) = \psi(\Vx) = (\psi, \ksf(\Vx, \cdot))_\Ch\,,
\end{equation}
and thus,
\begin{equation}
(\Ci_{V_h}(\psi) - \psi, \phi)_\Ch = 0 \quad \text{for all } \phi\in V_h(\Cx)\,.
\end{equation}
\end{proof}
\begin{corollary}\label{cor:interpol_operator_continuous1}
If \cref{ass:one} holds true, then
the interpolation operator $\Ci_{V_h}$ satisfies
\begin{equation}\label{eq:estimateInterp}
\Vert \Ci_{V_h}(f\psi) \Vert_\Ch \leq c_\Omega\Vert f \Vert_{C^{k}(\partial\Omega)} \Vert \psi \Vert_\Ch
\end{equation}
for every $f\in C^{k}(\partial\Omega)$ and every $\psi \in \Ch(\partial\Omega)$.
\end{corollary}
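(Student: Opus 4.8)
The plan is to combine two ingredients: that an $\Ch$-orthogonal projection does not increase the $\Ch$-norm, and the module inequality \cref{eq:module} that is built into \cref{ass:one}. No genuine difficulty is expected; the estimate is essentially a one-line consequence of these two facts, so the work is just in assembling them correctly and checking that $\Ci_{V_h}(f\psi)$ is well defined.

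First I would invoke the preceding lemma, which establishes that $\Ci_{V_h}$ is an $\Ch$-orthogonal projection onto $V_h(\Cx)$. Any orthogonal projection $P$ in a Hilbert space is norm-nonincreasing: the Pythagorean identity $\|g\|_\Ch^2 = \|Pg\|_\Ch^2 + \|g-Pg\|_\Ch^2$ gives $\|Pg\|_\Ch \le \|g\|_\Ch$. Hence
\[
\|\Ci_{V_h}(g)\|_\Ch \le \|g\|_\Ch \quad \text{for every } g\in\Ch(\partial\Omega)\,.
\]

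Next I would specialise this to $g = f\psi$. By \cref{ass:one}, and in particular the module property \cref{eq:module}, the product $f\psi$ does belong to $\Ch(\partial\Omega)$ whenever $f\in C^{k}(\partial\Omega)$ and $\psi\in\Ch(\partial\Omega)$ (so that $\Ci_{V_h}(f\psi)$ is indeed meaningful), and moreover it satisfies $\|f\psi\|_\Ch \le c_\Omega \|f\|_{C^{k}(\partial\Omega)} \|\psi\|_\Ch$. Chaining the two bounds then yields
\[
\|\Ci_{V_h}(f\psi)\|_\Ch \le \|f\psi\|_\Ch \le c_\Omega \|f\|_{C^{k}(\partial\Omega)} \|\psi\|_\Ch\,,
\]
which is precisely the claimed estimate \cref{eq:estimateInterp}. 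The only point requiring any care is the well-definedness of $\Ci_{V_h}(f\psi)$, i.e.\ that $f\psi\in\Ch(\partial\Omega)$, and this is exactly what \cref{eq:module} supplies.
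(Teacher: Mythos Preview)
Your proof is correct and follows essentially the same approach as the paper: use that $\Ci_{V_h}$ is an $\Ch$-orthogonal projection (hence norm-nonincreasing) and then apply the module inequality \cref{eq:module} to bound $\|f\psi\|_\Ch$.
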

\begin{proof}
Choose an $f\in C^{k}(\partial\Omega)$ and a
$\psi \in \Ch(\partial\Omega)$. Since $\Ci_{V_h}$ is an $\Ch$-orthogonal projection, 
$V_h(\Cx)\subset\Ch(\partial\Omega)$, and \cref{eq:module},
it holds that
\begin{equation}
\Vert \Ci_{V_h}(f\psi) \Vert_\Ch \leq\Vert f\psi \Vert_\Ch  \leq c_\Omega\Vert f \Vert_{C^{k}(\partial\Omega)} \Vert \psi \Vert_\Ch\,.
\end{equation}
\end{proof}

\begin{corollary}\label{cor:interpol_operator_continuous2}
If \cref{ass:one} holds true, then the interpolation operator
$\Ci_{[V_h]^2}$ satisfies
\begin{equation}\label{eq:interpol_operator_continuous2}
\Vert \Ci_{[V_h]^2}(\taubf \psi)\Vert_{[\Ch]^2}
\leq c_\Omega \Vert \taubf \Vert_{C^k}
\Vert \psi \Vert_\Ch \quad \text{for all } \psi \in \Ch(\partial\Omega).
\end{equation}
\end{corollary}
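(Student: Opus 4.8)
The plan is to reduce this vector-valued estimate to the already-established scalar estimate of \cref{cor:interpol_operator_continuous1} by exploiting the Cartesian product structure. First I would note that $\Ci_{[V_h]^2}$ acts separately on each component, since $[V_h(\Cx)]^2 = V_h(\Cx)\times V_h(\Cx)$ and the interpolation conditions in the two slots are imposed at the same collocation points $\Cx$. Writing the vector field as $\taubf\psi = ((\taubf\cdot\Ve_1)\psi,\,(\taubf\cdot\Ve_2)\psi)$, this decoupling reads
\begin{equation*}
\Ci_{[V_h]^2}(\taubf\psi) = \big(\Ci_{V_h}((\taubf\cdot\Ve_1)\psi),\ \Ci_{V_h}((\taubf\cdot\Ve_2)\psi)\big),
\end{equation*}
and hence, by the definition of the $[\Ch]^2$-norm,
\begin{equation*}
\Vert \Ci_{[V_h]^2}(\taubf\psi)\Vert_{[\Ch]^2}^2 = \sum_{i=1}^2 \Vert \Ci_{V_h}((\taubf\cdot\Ve_i)\psi)\Vert_\Ch^2.
\end{equation*}

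Next I would apply \cref{cor:interpol_operator_continuous1} to each summand with $f = \taubf\cdot\Ve_i$; this is legitimate because each component $\taubf\cdot\Ve_i$ of the unit tangential field lies in $C^k(\partial\Omega)$ (as $\partial\Omega$ is of class $C^{k+1}$). This produces the bound $\Vert \Ci_{V_h}((\taubf\cdot\Ve_i)\psi)\Vert_\Ch \le c_\Omega\Vert\taubf\cdot\Ve_i\Vert_{C^k(\partial\Omega)}\Vert\psi\Vert_\Ch$ for $i=1,2$. Summing the squares and factoring out $c_\Omega^2\Vert\psi\Vert_\Ch^2$ leaves the quantity $\Vert\taubf\cdot\Ve_1\Vert_{C^k}^2 + \Vert\taubf\cdot\Ve_2\Vert_{C^k}^2$, which is exactly $\Vert\taubf\Vert_{C^k}^2$ by the definition of the vector-valued $C^k$-norm fixed just before \cref{lem:B_surjective}. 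Taking a square root then yields the claimed estimate \cref{eq:interpol_operator_continuous2}.

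There is no substantial obstacle here: the argument is a routine componentwise reduction to the scalar case. The only point requiring genuine (if minor) care is verifying that $\Ci_{[V_h]^2}$ decouples exactly across the two Cartesian components, which follows directly from the product construction $[V_h(\Cx)]^2 = V_h(\Cx)\times V_h(\Cx)$ and the fact that the pointwise interpolation conditions are imposed slot-by-slot at the common node set $\Cx$. Everything else is an immediate consequence of \cref{cor:interpol_operator_continuous1} together with the definitions of the norms involved.
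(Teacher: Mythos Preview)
Your proof is correct and follows essentially the same approach as the paper: decompose $\Ci_{[V_h]^2}(\taubf\psi)$ componentwise, apply \cref{cor:interpol_operator_continuous1} to each scalar component, and recombine using the definition of $\Vert\taubf\Vert_{C^k}$. The paper's argument is line-for-line the same.
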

\begin{proof}
Note that $\Ci_{[V_h]^2}(\taubf \psi) = \Ci_{V_h}(\taubf\cdot\Ve_1 \psi)\Ve_1 + \Ci_{V_h}(\taubf\cdot\Ve_2 \psi)\Ve_2$.
Therefore, by \cref{cor:interpol_operator_continuous1},
\begin{align*}
\Vert \Ci_{[V_h]^2}(\taubf \psi)\Vert_{[\Ch]^2}^2 &= \Vert \Ci_{V_h}(\taubf\cdot\Ve_1 \psi)\Vert_{\Ch}^2
+ \Vert \Ci_{V_h}(\taubf\cdot\Ve_2 \psi)\Vert_{\Ch}^2\\
&\leq c_\Omega^2 \Vert \taubf\cdot\Ve_1\Vert_{C^k}^2 \Vert \psi \Vert_\Ch^2
+c_\Omega^2 \Vert \taubf\cdot\Ve_2\Vert_{C^k}^2 \Vert \psi \Vert_\Ch^2\\
&= c_\Omega^2\Vert \taubf\Vert_{C^k}^2\Vert \psi \Vert_\Ch^2\,.
\end{align*}
\end{proof}

\begin{lemma}[discrete inf-sup condition]\label{lem:infsup}
The following holds
\ben\label{eq:infsup}
\sup_{\substack{\varphibf \in [V_h(\mathcal X)]^2,\\ \|\varphibf\|_{[\Ch]^2}\le 1} } (\varphibf \cdot \taubf, \psi )_{\mathcal  H}
\geq \frac{\|\psi\|_{\Ch}}{c_\Omega\|\taubf\|_{C^k}}
\quad \text{for all }\psi  \in V_h(\mathcal Y)\,.
\een
\end{lemma}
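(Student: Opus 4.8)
The plan is to mimic the construction used in the proof of \cref{lem:B_surjective}, but to replace the test function $\taubf\psi$ (which in general does not belong to $[V_h(\Cx)]^2$) by its interpolant. Concretely, given $\psi\in V_h(\Cy)$, I would take the candidate test function $\varphibf\coloneqq \Ci_{[V_h]^2}(\taubf\psi)\in[V_h(\Cx)]^2$ and bound the quotient $(\varphibf\cdot\taubf,\psi)_\Ch/\|\varphibf\|_{[\Ch]^2}$ from below. The denominator is controlled immediately by \cref{cor:interpol_operator_continuous2}, which gives $\|\varphibf\|_{[\Ch]^2}\le c_\Omega\|\taubf\|_{C^k}\|\psi\|_\Ch$. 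The crux is therefore to show that the numerator equals exactly $\|\psi\|_\Ch^2$.

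To evaluate the numerator I would invoke the reproducing property. Since $\psi\in V_h(\Cy)$, write $\psi=\sum_j c_j\,\ksf(\Vy_j,\cdot)$ with $\Vy_j\in\Cy$; then for any $g\in\Ch(\partial\Omega)$ the inner product collapses to a finite combination of point evaluations, $(g,\psi)_\Ch=\sum_j c_j\,g(\Vy_j)$. Applying this with $g=\varphibf\cdot\taubf$ reduces $(\varphibf\cdot\taubf,\psi)_\Ch$ to $\sum_j c_j\,(\varphibf\cdot\taubf)(\Vy_j)$.

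The key observation, and the only place where the hypothesis $\Cy\subset\Cx$ enters, is that each $\Vy_j$ is an interpolation node of $\Ci_{[V_h]^2}$. Hence $\varphibf(\Vy_j)=\Ci_{[V_h]^2}(\taubf\psi)(\Vy_j)=\taubf(\Vy_j)\,\psi(\Vy_j)$, and since $|\taubf|=1$ on $\partial\Omega$ we obtain $(\varphibf\cdot\taubf)(\Vy_j)=|\taubf(\Vy_j)|^2\psi(\Vy_j)=\psi(\Vy_j)$. Substituting back and reversing the previous step yields $(\varphibf\cdot\taubf,\psi)_\Ch=\sum_j c_j\,\psi(\Vy_j)=(\psi,\psi)_\Ch=\|\psi\|_\Ch^2$, as desired.

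Combining the two estimates (and treating the trivial case $\psi=0$ separately so that the division is legitimate) gives
\begin{equation*}
\sup_{\substack{\varphibf\in[V_h(\Cx)]^2\\ \|\varphibf\|_{[\Ch]^2}\le 1}}(\varphibf\cdot\taubf,\psi)_\Ch
\ \ge\ \frac{(\varphibf\cdot\taubf,\psi)_\Ch}{\|\varphibf\|_{[\Ch]^2}}
\ =\ \frac{\|\psi\|_\Ch^2}{\|\Ci_{[V_h]^2}(\taubf\psi)\|_{[\Ch]^2}}
\ \ge\ \frac{\|\psi\|_\Ch}{c_\Omega\|\taubf\|_{C^k}},
\end{equation*}
which is precisely \cref{eq:infsup}. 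I do not anticipate a genuine obstacle: the argument runs entirely parallel to the continuous case, with discreteness entering only through the interpolation bound of \cref{cor:interpol_operator_continuous2} and through the compatibility condition $\Cy\subset\Cx$. The single point requiring care is verifying that the reproducing points of $V_h(\Cy)$ are indeed interpolation nodes for $\Ci_{[V_h]^2}$, which is exactly what $\Cy\subset\Cx$ guarantees and what makes the numerator computation exact rather than merely approximate.
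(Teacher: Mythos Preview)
Your proposal is correct and follows essentially the same approach as the paper: both take the test function $\varphibf=\Ci_{[V_h]^2}(\taubf\psi)$, bound its norm via \cref{cor:interpol_operator_continuous2}, and use the inclusion $\Cy\subset\Cx$ together with the reproducing property to show that $(\varphibf\cdot\taubf,\psi)_\Ch=\|\psi\|_\Ch^2$. The only cosmetic difference is that the paper verifies the latter identity by testing against each basis element $\ksf(\Vy,\cdot)$, whereas you expand $\psi$ in this basis first; these are the same computation.
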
	
\begin{proof}
Let $\psi\in V_h(\Cy)$. By \cref{cor:interpol_operator_continuous2},
\begin{equation}\label{eq:proofinfsup1ststep}
\sup_{\substack{\varphibf \in [V_h(\mathcal X)]^2,\\ \|\varphibf\|_{[\Ch]^2}\le 1} } (\varphibf \cdot \taubf, \psi )_{\mathcal  H} \ge \left(\taubf\cdot\frac{ \Ci_{[V_h]^2}(\taubf\psi)}{\|\Ci_{[V_h]^2}(\taubf\psi)\|_{[\Ch]^2}}, \psi\right)_{\Ch}
\geq \frac{(\taubf\cdot\Ci_{[V_h]^2}(\taubf\psi),\psi)_{\Ch}}{c_\Omega\|\taubf\|_{C^k}\|\psi\|_{\Ch}}\,.
\end{equation}
Let $\Vy\in\Cy$. Then, since $\Cy \subset\Cx$, 
\begin{align*}
(\taubf\cdot\Ci_{[V_h]^2}(\taubf\psi),\ksf(\Vy,\cdot))_{\Ch}
&= \taubf(\Vy)\cdot\Ci_{[V_h]^2}(\taubf\psi)(\Vy)\\
&= \taubf(\Vy)\cdot (\taubf\psi)(\Vy)
= \psi(\Vy)
= (\psi,\ksf(\Vy,\cdot))_{\Ch}\,.
\end{align*}
Therefore, \cref{eq:infsup} follows by replacing $(\taubf\cdot\Ci_{[V_h]^2}(\taubf\psi),\psi)_{\Ch} = (\psi,\psi)_{\Ch} = \Vert\psi\Vert_{\Ch}^2$ into \cref{{eq:proofinfsup1ststep}}.
\end{proof}

\begin{theorem}\label{thm:uniquenessdiscr}
The finite dimensional problem \cref{eq:saddlepointdiscr} has a unique solution, which satisfies
\ben\label{eq:estimate_rh}
\|\Vr^h_\Vx\|_{[\Ch]^2} \le 1 \quad \text{and}\quad 
\Vert p_\Vx^h\Vert_\Ch \leq 2c_\Omega\|\taubf\|_{C^k}\,.% \Vert \taubf\Vert_{C^k}^2. 
\een
Moreover,
\begin{equation}
\label{eq:r_x^hquasiopt}
\|\Vr_\Vx^h - \Vr_\Vx\|_{[\Ch]^2}
\le \Vert \nubf\Vert_{C^k}^2\inf_{\varphibf\in [V_h(\Cx)]^2}\|\Vr_x-\varphibf\|_{[\Ch]^2}
+ \Vert \taubf\Vert_{C^k} \inf_{\psi\in V_h(\Cy)}\|p_\Vx -  \psi\|_{\Ch}\,,
\end{equation}
and
\begin{equation}
\label{eq:p_x^hquasiopt}
\|p_\Vx^h - p_\Vx\|_{\Ch}
\leq c_\Omega \Vert \taubf \Vert_{C^k}
\|\Vr_\Vx^h - \Vr_\Vx\|_{[\Ch]^2}
+  (1 + c_\Omega^2\Vert \taubf \Vert_{C^k}^2)\inf_{\psi\in V_h(\Cy)}\|p_\Vx -  \psi\|_{\Ch}\,.
\end{equation}
\end{theorem}
\begin{proof}
The functions $\Vr_\Vx^h$ and $p_\Vx^h$ can be written as
\begin{equation}
\Vr_\Vx^h =\sum_{i=1}^N \alphabf_{i}\ksf(\Vx_i, \cdot), 
\qquad  p_\Vx^h = \sum_{i=1}^M \beta_{i} \ksf(\Vy_i, \cdot),
\end{equation}
where $\alphabf_{i}=(\alpha_i^1,\alpha_i^2)^\top\in\VR^2$ and $\beta_{i}\in \VR$ are certain unknown coefficients.

The vectors  $\Va \coloneqq (\alpha_{1}^1,\ldots, \alpha_{N}^1, \alpha_{1}^2,\ldots, \alpha_{N}^2)^\top$
and $\Vb \coloneqq (\beta_1,\ldots, \beta_M)^\top$ satisfy
\begin{equation}\label{eq:discretesaddlepoint2}
\begin{pmatrix}
	\VA& \VB^\top\\
	\VB & \bold{0}\;\;  	
\end{pmatrix}
\begin{pmatrix}
	\Va\\ \Vb
\end{pmatrix} = \begin{pmatrix}
\VL\\\bold{0}
\end{pmatrix}\,,
\end{equation}
where \begin{equation}\label{ea:VA}
\VA = \begin{pmatrix}
	\tilde\VA & \bold{0}\\
	\bold{0}    & \tilde\VA
\end{pmatrix}, \quad \VB = \begin{pmatrix}
\VB^1\,\VB^2
\end{pmatrix}, \quad \VL = \begin{pmatrix}
\VL^1\\
\VL_2
\end{pmatrix}\,,
\end{equation}
and (with $\taubf = (\tau^1, \tau^2)^\top$ and  $\nubf = (\nu^1, \nu^2)^\top$)
\begin{align}
\label{eq:Ah2}
(\tilde\VA)_{n,m} &= \ksf(\Vx_n,\Vx_m), &  n, m=1,\ldots, N,\\
\label{eq:Bh2}
(\VB^\ell)_{m,n}   &=  \ksf(\Vx_n,\Vy_m)\tau^\ell(\Vx_n),& m = 1,\ldots, M,\ n=1,\ldots, N, \\\
\label{eq:Fh2}
(\VL^\ell)_n &= \nu^\ell(\Vx) \ksf(\Vx_n,\Vx) ,& \quad n=1,\ldots, N.
\end{align}

The linear system \cref{eq:discretesaddlepoint2} has a unique solution because
$\tilde\VA$ (and thus $\VA$) is positive-definite and $\VB$ has maximal rank  $M$.
The maximal rank of $\VB$ follows directly from \cref{lem:infsup}. For a more
direct proof, recall that $\mathcal Y\subset \mathcal X$.
Therefore, the matrix $\VB$ contains the $M$ column vectors
$\{(\ksf(\Vy_i,\Vy_j)c_i)_{j=1,\ldots, M}:  i=1,\dots, M\}$,
where $c_i = \tau^1(\Vy_i)$ if $\tau^1(\Vy_i)\neq 0$ and
$c_i = \tau^2(\Vy_i)$ otherwise (so that $c_i\neq0$ for every  $i$). These vectors are
linearly independent because the kernel $\ksf$ is positive-definite.

The estimate \cref{eq:estimate_rh} follows by inserting $\varphibf = \Vr_\Vx^h$
into \cref{eq:1RKHSh2} and using Cauchy-Schwarz and $\|\ksf(\Vx,\cdot)\|_{\Ch}=1$
(see \cref{eq:normkisone}). 
 To prove the estimate on the multiplier $p^h_\Vx$, note
that, by \cref{eq:infsup},
\begin{equation}
\Vert p^h_\Vx \Vert \leq c_\Omega\|\taubf\|_{C^k}
\sup_{\substack{\varphibf \in [V_h(\mathcal X)]^2,\\ \|\varphibf\|_{[\Ch]^2}\le 1} }
(\varphibf \cdot \taubf, p^h_\Vx )_{\mathcal  H}\,,
\end{equation}
and that by \cref{eq:1RKHSh2},
for every $\varphibf \in [V_h(\mathcal X)]^2$ with $\|\varphibf\|_{[\Ch]^2}\le 1$,
\begin{align*}
(\varphibf \cdot \taubf, p^h_\Vx)_{\Ch} 
&=\nubf(\Vx)\cdot\varphibf(\Vx)
- (\Vr^h_\Vx, \varphibf)_{[\Ch]^2}\\
&\leq\vert\varphibf(\Vx)\vert + \Vert \Vr^h_\Vx\Vert_{[\Ch]^2} \Vert \varphibf\Vert_{[\Ch]^2}
\leq \Vert \varphibf \Vert_{[\Ch]^2} + 1 \leq 2\,. 
\end{align*}

Finally, quasi-optimality results similar to \cref{eq:r_x^hquasiopt} and
\cref{eq:p_x^hquasiopt} can be derived by applying directly \cite[Prop. 5.2.1, pp 274]{BoBrFo13}.
Here, we give explicit details on the derivation of \cref{eq:r_x^hquasiopt} and \cref{eq:p_x^hquasiopt}.

To show \cref{eq:r_x^hquasiopt}, note that
the differences $\Vr_\Vx^h-\Vr_\Vx$ and  $p_\Vx^h-p_\Vx$ satisfy
\begin{subequations}
	\label{eq:saddlepoint_orthogonal}
	\begin{align}
	(\Vr_\Vx^h-\Vr_\Vx,\varphibf)_{[\Ch]^2}  +(\varphibf \cdot \taubf, p_\Vx^h - p_\Vx )_{\mathcal  H}
	&= 0&& \text{ for all } \varphibf \in[V_h(\mathcal X)]^2,\label{eq:1RKHSh22}\\
	\;((\Vr_\Vx^h-\Vr_\Vx)\cdot \taubf, \psi)_{\mathcal H}
	& =0 &&\text{ for all } \psi \in V_h(\mathcal Y) \label{eq:2RKHSh22}\,.
	\end{align}
\end{subequations}
This implies that
\begin{align}
\nonumber
\|\Vr_\Vx^h - \Vr_\Vx\|_{[\Ch]^2}^2 
&= (\Vr_\Vx^h-\Vr_\Vx,\Vr_\Vx^h-\Vr_\Vx)_{[\Ch]^2}\\
\nonumber
&= (\Vr_\Vx^h-\Vr_\Vx,\Vr_\Vx^h)_{[\Ch]^2} - (\Vr_\Vx^h-\Vr_\Vx,\Vr_\Vx)_{[\Ch]^2}\\
\nonumber
&= -(\Vr_\Vx^h\cdot\taubf,p_\Vx^h - p_\Vx )_{\Ch} - (\Vr_\Vx^h-\Vr_\Vx,\Vr_\Vx)_{[\Ch]^2}\\
\nonumber
&= -(\Vr_\Vx^h\cdot\taubf,p_\Vx^h )_{\Ch} + (\Vr_\Vx^h\cdot\taubf,p_\Vx )_{\Ch}
- (\Vr_\Vx^h-\Vr_\Vx,\Vr_\Vx)_{[\Ch]^2}\\
\nonumber
&= (\Vr_\Vx^h\cdot\taubf,p_\Vx )_{\Ch} - (\Vr_\Vx^h-\Vr_\Vx,\Vr_\Vx)_{[\Ch]^2}\\
\nonumber
&= (\Vr_\Vx^h\cdot\taubf,p_\Vx )_{\Ch} - (\Vr_\Vx^h-\Vr_\Vx,(\Vr_\Vx\cdot\nubf)\cdot\nubf)_{[\Ch]^2}\,.
\end{align}
The latter result combined with \cref{eq:saddlepoint_orthogonal} implies that,
for any $\varphibf \in [V_h(\mathcal X)]^2$ and any $\psi \in V_h(\mathcal Y)$,
\begin{equation}
\|\Vr_\Vx^h - \Vr_\Vx\|_{[\Ch]^2}^2 
= (\Vr_\Vx^h\cdot\taubf-\Vr_\Vx\cdot\taubf,p_\Vx -  \psi)_{\Ch} 
- (\Vr_\Vx^h-\Vr_\Vx,(\Vr_\Vx\cdot\nubf)\cdot\nubf-(\varphibf\cdot\nubf)\nubf)_{[\Ch]^2}\,.
\end{equation}
Then, Cauchy-Schwarz inequality and \cref{eq:module} imply
\begin{align}
\nonumber
\|\Vr_\Vx^h - \Vr_\Vx\|_{[\Ch]^2}^2
&\le \|\Vr_\Vx^h\cdot\taubf-\Vr_\Vx\cdot\taubf\|_{\Ch}\|p_\Vx -  \psi\|_{\Ch}
+ \|\Vr_\Vx^h-\Vr_\Vx\|_{[\Ch]^2} \|((\Vr_x-\varphibf)\cdot \nubf)\nubf\|_{[\Ch]^2}\,,\\
\label{eq:rx-rxhineq}
&\le \|\Vr_\Vx^h-\Vr_\Vx\|_{[\Ch]^2}
( \Vert \taubf\Vert_{C^k} \|p_\Vx -  \psi\|_{\Ch}
+\Vert \nubf\Vert_{C^k}^2\|\Vr_x-\varphibf\|_{[\Ch]^2})\,,
\end{align}
from which \cref{eq:r_x^hquasiopt} follows easily.

To show \cref{eq:p_x^hquasiopt}, note that for any $\psi \in V_h(\mathcal Y)$,
\cref{eq:infsup} implies
\begin{equation}
\Vert p_\Vx^h -\psi \Vert_{\Ch} \leq c_\Omega \Vert \taubf\Vert_{C^k}
\sup_{\substack{\varphibf \in [V_h(\mathcal X)]^2,\\ \|\varphibf\|_{[\Ch]^2}\le 1} }
(\varphibf \cdot \taubf, p_\Vx^h -\psi)_{\mathcal  H}\,.
\end{equation}
For any $\varphibf \in [V_h(\mathcal X)]^2$, \cref{eq:saddlepoint} and \cref{eq:saddlepointdiscr}
imply
\begin{align*}
(\varphibf \cdot \taubf, p_\Vx^h -\psi)_{\Ch}
&= (\varphibf \cdot \taubf, p_\Vx^h)_{\Ch} - (\varphibf \cdot \taubf, \psi)_{\Ch}\\
&= \nubf(\Vx)\cdot\varphibf(\Vx) - (\Vr_\Vx^h, \varphibf)_{[\Ch]^2}  - (\varphibf \cdot \taubf, \psi)_{\Ch}\\
&=(\Vr_\Vx, \varphibf)_{[\Ch]^2} + (\varphibf \cdot \taubf, p_\Vx)_{\Ch}
- (\Vr_\Vx^h, \varphibf)_{[\Ch]^2}  - (\varphibf \cdot \taubf, \psi)_{\Ch}\\
&=(\Vr_\Vx-\Vr_\Vx^h,\varphibf )_{[\Ch]^2} 
+(\varphibf \cdot \taubf, p_\Vx -\psi)_{\Ch}\\
&\leq \Vert \psibf \Vert_{[\Ch]^2}(\Vert \Vr_\Vx-\Vr_\Vx^h \Vert_{[\Ch]^2} 
+ c_\Omega \Vert \taubf \Vert_{C^k}  \Vert\Vert p_\Vx -\psi\Vert_{\Ch})\,,
\end{align*}
where the last inequality follows from Cauchy Schwarz inequality and \cref{eq:module}.
Therefore,
\begin{equation}
\Vert p_\Vx^h -\psi \Vert_{\Ch} \leq
c_\Omega \Vert \taubf\Vert_{C^k}
(\Vert \Vr_\Vx-\Vr_\Vx^h \Vert_{[\Ch]^2} 
+ c_\Omega \Vert \taubf \Vert_{C^k}  \Vert p_\Vx -\psi\Vert_{\Ch})\,,
\end{equation}
and, by the triangle inequality,
\begin{align*}
\Vert p_\Vx - p_\Vx^h \Vert_{\Ch} &\leq \Vert p_\Vx - \psi \Vert_{\Ch} + \Vert p_\Vx^h - \psi \Vert_{\Ch}\\
&\leq c_\Omega \Vert \taubf\Vert_{C^k} \Vert \Vr_\Vx-\Vr_\Vx^h \Vert_{[\Ch]^2} 
+ (1 + c_\Omega^2\Vert \taubf \Vert_{C^k}^2)  \Vert p_\Vx -\psi\Vert_{\Ch})\,.
\end{align*}
\end{proof}

The following corollary follows directly from the quasi-optimality estimates
\cref{eq:r_x^hquasiopt}  and \cref{eq:p_x^hquasiopt} 
%Before proving convergence rates for specific choices of reproducing kernels,
%we show that the approximation $\Vr_\Vx^h$ converges weakly to $\Vr_\Vx$, if the
%sets $\Cx$ and $\Cy$ are chosen appropriately.
\begin{corollary}
Let $\{\Cy_N\}_{N\in\VN}$ be a nested sequence of
finite subsets (that contain pairwise distinct points) of $\partial \Omega$
such that the union $\cup_{N\in\VN}\Cy_N$ is dense in $\partial \Omega$.
%and set $\Cy_N = \Cx_N$ in \cref{eq:saddlepointdiscr}.
%Let $h_N$ be the fill distance associated with $\Cx_N$.
Then, the sequence of functions $(\Vr_\Vx^{h_N},p_\Vx^{h_N})$ converges strongly to
$(\Vr_\Vx,p_\Vx)$.
\end{corollary}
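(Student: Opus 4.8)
The plan is to reduce the statement to the two quasi-optimality bounds \eqref{eq:r_x^hquasiopt} and \eqref{eq:p_x^hquasiopt} established in \Cref{thm:uniquenessdiscr}. These bounds control $\|\Vr_\Vx^{h_N}-\Vr_\Vx\|_{[\Ch]^2}$ and $\|p_\Vx^{h_N}-p_\Vx\|_{\Ch}$ in terms of the best-approximation errors $\inf_{\varphibf\in[V_h(\Cx_N)]^2}\|\Vr_\Vx-\varphibf\|_{[\Ch]^2}$ and $\inf_{\psi\in V_h(\Cy_N)}\|p_\Vx-\psi\|_{\Ch}$, multiplied by the fixed constants $\|\nubf\|_{C^k}$, $\|\taubf\|_{C^k}$, and $c_\Omega$. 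Hence it suffices to prove that these two infima tend to zero as $N\to\infty$, i.e. that $\cup_N V_h(\Cy_N)$ is dense in $\Ch(\partial\Omega)$ and that $\cup_N[V_h(\Cx_N)]^2$ is dense in $[\Ch(\partial\Omega)]^2$.

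The key step is the density of kernel translates in the native space. I would fix a set $D\subset\partial\Omega$ that is dense in $\partial\Omega$ and show that $\mathrm{span}\{\ksf(\Vx,\cdot)|_{\partial\Omega}:\Vx\in D\}$ is dense in $\Ch(\partial\Omega)$. Indeed, if $f\in\Ch(\partial\Omega)$ is orthogonal to every kernel section $\ksf(\Vx,\cdot)$ with $\Vx\in D$, then the reproducing property gives $f(\Vx)=(f,\ksf(\Vx,\cdot))_{\Ch}=0$ for all $\Vx\in D$; since the elements of $\Ch(\partial\Omega)$ are continuous on $\partial\Omega$ and $D$ is dense, it follows that $f\equiv0$, so the orthogonal complement of the span is trivial. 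Because $\Cy_N\subset\Cx_N$ by the standing assumption and $\cup_N\Cy_N$ is dense in $\partial\Omega$, both $\cup_N\Cy_N$ and $\cup_N\Cx_N$ are dense; applying the above to $D=\cup_N\Cy_N$ and to $D=\cup_N\Cx_N$ shows that $\cup_N V_h(\Cy_N)$ is dense in $\Ch(\partial\Omega)$, and, componentwise, that $\cup_N[V_h(\Cx_N)]^2$ is dense in $[\Ch(\partial\Omega)]^2$.

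It then remains to assemble the estimates. Since $\Vr_\Vx\in[\Ch(\partial\Omega)]^2$ and $p_\Vx\in\Ch(\partial\Omega)$, the density just established forces $\inf_{\varphibf\in[V_h(\Cx_N)]^2}\|\Vr_\Vx-\varphibf\|_{[\Ch]^2}\to0$ and $\inf_{\psi\in V_h(\Cy_N)}\|p_\Vx-\psi\|_{\Ch}\to0$. Plugging these into \eqref{eq:r_x^hquasiopt} yields $\|\Vr_\Vx^{h_N}-\Vr_\Vx\|_{[\Ch]^2}\to0$, and substituting this together with the second vanishing infimum into \eqref{eq:p_x^hquasiopt} yields $\|p_\Vx^{h_N}-p_\Vx\|_{\Ch}\to0$. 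I expect the only genuinely delicate point to be the density of kernel translates; everything else is a direct consequence of the quasi-optimality estimates and of the continuity of functions in the RKHS, the latter guaranteeing that a continuous function vanishing on a dense subset of $\partial\Omega$ vanishes identically.
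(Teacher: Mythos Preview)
Your proposal is correct and follows exactly the route the paper intends: the corollary is stated without proof beyond the remark that it ``follows directly from the quasi-optimality estimates \eqref{eq:r_x^hquasiopt} and \eqref{eq:p_x^hquasiopt}'', and you have supplied the one missing ingredient---density of the kernel translates in $\Ch(\partial\Omega)$---via the standard reproducing-property-plus-continuity argument (continuity being available from \Cref{lem:kernel_continous}). Nothing further is needed.
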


Before studying convergence rates, we mention that in \cite{sturm3}
Sturm proved superlinear convergence of a shape Newton method
in the Micheletti space based on functions
of the form $\ksf(\Vx,\cdot)\nubf(\Vx)$. The next lemma shows
that such functions arise from a particular discretisation of \cref{eq:saddlepoint}. 
\begin{proposition}
If $M=1$ and $\Cy=\{\Vx\}\subset\Cx$, then the solution of \cref{eq:saddlepointdiscr} is
$$
\Vr_\Vx^h(\cdot) = \ksf(\Vx,\cdot)\nubf(\Vx), \quad p^h_\Vx =0\,.
$$
\end{proposition}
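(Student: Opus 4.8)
The plan is to guess the solution and then verify it, invoking the uniqueness already established in \cref{thm:uniquenessdiscr}. Since that theorem guarantees that the discrete saddle point problem \cref{eq:saddlepointdiscr} has a \emph{unique} solution, it suffices to check that the proposed pair $(\ksf(\Vx,\cdot)\nubf(\Vx),\,0)$ satisfies \cref{eq:1RKHSh2} and \cref{eq:2RKHSh2}. First I would confirm that the candidate lives in the correct discrete spaces: because $\Vx\in\Cy\subset\Cx$, the scalar function $\ksf(\Vx,\cdot)$ belongs to $V_h(\Cx)$, so $\ksf(\Vx,\cdot)\nubf(\Vx)$ (which is $\ksf(\Vx,\cdot)$ times the constant vector $\nubf(\Vx)\in\VR^2$) belongs to $[V_h(\Cx)]^2$, and trivially $0\in V_h(\Cy)$.

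Next I would verify \cref{eq:1RKHSh2}. With $p_\Vx^h=0$ the multiplier term drops out, so I only need $(\ksf(\Vx,\cdot)\nubf(\Vx),\varphibf)_{[\Ch]^2}=\nubf(\Vx)\cdot\varphibf(\Vx)$ for every $\varphibf\in[V_h(\Cx)]^2$. Expanding the $[\Ch]^2$ inner product componentwise and applying the reproducing property $(\ksf(\Vx,\cdot),\varphibf\cdot\Ve_i)_{\Ch}=\varphibf(\Vx)\cdot\Ve_i$ yields exactly $\nubf(\Vx)\cdot\varphibf(\Vx)$; this is precisely the identity already computed in the proof of \cref{thm:propsofrx}, so it can simply be cited.

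Then I would verify \cref{eq:2RKHSh2}. Since $V_h(\Cy)=\mathrm{span}\{\ksf(\Vx,\cdot)\}$, it is enough to test against $\psi=\ksf(\Vx,\cdot)$. By the reproducing property, $(\ksf(\Vx,\cdot)\,\nubf(\Vx)\cdot\taubf,\,\ksf(\Vx,\cdot))_{\Ch}=\ksf(\Vx,\Vx)\,\nubf(\Vx)\cdot\taubf(\Vx)$. The normalisation $\ksf(\Vx,\Vx)=1$ from \cref{eq:nomalizedk} together with the orthogonality $\nubf(\Vx)\cdot\taubf(\Vx)=0$ of the unit normal and unit tangent make this vanish, so \cref{eq:2RKHSh2} holds.

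Having checked both equations, the pair $(\ksf(\Vx,\cdot)\nubf(\Vx),\,0)$ is a solution, and uniqueness from \cref{thm:uniquenessdiscr} then forces $\Vr_\Vx^h=\ksf(\Vx,\cdot)\nubf(\Vx)$ and $p_\Vx^h=0$. There is no genuine obstacle here: the only points requiring care are the bookkeeping that the candidate really lies in $[V_h(\Cx)]^2\times V_h(\Cy)$ (which is where $\Vx\in\Cx$ is used) and the correct use of the normalisation $\ksf(\Vx,\Vx)=1$ in the second equation; everything else is a direct appeal to the reproducing property.
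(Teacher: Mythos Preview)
Your proposal is correct and follows essentially the same route as the paper: invoke uniqueness from \cref{thm:uniquenessdiscr}, then verify the candidate satisfies \cref{eq:1RKHSh2} via the reproducing property and \cref{eq:2RKHSh2} by testing against the single generator $\ksf(\Vx,\cdot)$ of $V_h(\Cy)$. One harmless remark: the normalisation $\ksf(\Vx,\Vx)=1$ is not actually needed in the second step, since $\nubf(\Vx)\cdot\taubf(\Vx)=0$ already kills the product regardless of the value of $\ksf(\Vx,\Vx)$.
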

\begin{proof}
By \cref{thm:uniquenessdiscr}, we only need to verify that the pair
$(\Vr_\Vx^h, p^h_\Vx)$ satisfies \cref{eq:saddlepointdiscr}.
By the reproducing kernel property,
\[ 
(\Vr_\Vx^h, \varphibf)_{[\Ch]^2} = \nubf(\Vx)\cdot \varphibf(\Vx)
\quad \text{for all } \varphibf \in [\Ch(\partial\Omega)]^2\,,
\]
which, together with $p^h_\Vx =0$, implies \cref{eq:1RKHSh2}.
Finally,
\begin{equation}
(\Vr^h_\Vx\cdot \taubf, \ksf(\Vx, \cdot))_{\mathcal H}
= \ksf(\Vx, \Vx)\taubf(\Vx)\cdot \nubf(\Vx) = 0\,,
\end{equation}
together with  $V_h(\Cy) = \mathrm{span}\{\ksf(\Vx, \cdot)\}$, implies \cref{eq:2RKHSh2}.
\end{proof}

Henceforth, we restrict ourselves to radial kernels, 
for which a solid interpolation theory on submanifolds is available. 

\begin{assumption}\label{ass:radialkernel}
The kernel $\ksf$ satisfies $\ksf(\Vx,\Vy) = \Phi(\Vx-\Vy)$
for a positive definite function $\Phi\in L_2(\VR^d)\cap C(\VR^d)$.
Assume that the Fourier transform  $\hat \Phi$ of $\Phi$  satisfies
\begin{equation}\label{eq:fourier}
c_1 (1+\|\Vx\|^2)^{-\xi} \le  \hat \Phi(\Vx)  \le c_2 (1+\|\Vx\|^2)^{-\xi} \quad \text{ for all }\Vx\in \VR^2, 
\end{equation}
with $\xi >1$ and two constants $c_1,c_2>0$, $c_1\le c_2$. 
\end{assumption}
\begin{remark}
\cref{ass:radialkernel} implies that the native space of $\ksf$
on $\VR^2$ is $\Ch(\VR^2)=H^{\xi}(\VR^2)$ with equivalent norms. Thus, by the Sobolev trace theorem, the native space of $\ksf$ on $\partial\Omega$
is $\Ch(\partial\Omega)=H^{\xi-1/2}(\partial\Omega)$ \cite[Prop. 5]{FUWR12}.
\end{remark}

For such kernels, we can derive convergence rates
of $\Vr_\Vx^h$ to $\Vr_\Vx$ and $p_\Vx^h$ to $p_\Vx$ in certain Sobolev spaces.
For shortness, we focus on the more interesting term $\Vr_\Vx$
(convergence rates of $p_\Vx^h$ can be derived in exactly the same way).
Before stating the theorem, we recall the concept of fill distance of discrete sets.

\begin{definition}
The fill distance associated with a set $\mathcal X\subset \partial \Omega$ is defined by 
\begin{equation}
h_{\mathcal X,\partial \Omega} \coloneqq \adjustlimits\sup_{\Vy\in \partial \Omega}\inf_{\Vx\in \mathcal X} d_{\partial \Omega}(\Vx,\Vy),
\end{equation}
where $d_{\partial \Omega}$ denotes the geodesic distance associated with $\partial \Omega$.
Note that $d_{\partial \Omega}$ is equivalent to the Euclidean metric restricted on $\partial \Omega$
\cite[Theorem 6]{FUWR12}, that is, there are two constants $c_1,c_2>0$ (that depend only 
on $\partial \Omega$) such that
\ben\label{eq:distance}
c_1 \|\Vx-\Vy\| \le  d_{\partial \Omega}(\Vx,\Vy) \le c_2 \|\Vx-\Vy\|
\quad\text{ for all }  \Vx,\Vy\in \partial \Omega\,.
\een
\end{definition}

Given a continuous kernel $\ksf$ we define the integral operator $T:L_2(\partial \Omega) \to L_2(\partial \Omega)$ via
\ben
f \mapsto \left(Tf: \Vx \mapsto \int_{\partial \Omega}\ksf(\Vx,\Vs) f(\Vs)\; d\Vs\right)\,. 
\een
We define similarly the integral operator $\VT$ the acts on $L_2(\partial \Omega,\VR^2)$.

\begin{theorem}\label{thm:rxhrate}
Let $\Cy = \Cx$ and $\xi>1$, and $s = \xi - 1/2$. 
Let $\ksf$ satisfy Assumption~\ref{ass:radialkernel},
so that $\Ch(\partial \Omega) := H^s(\partial \Omega)$.
Assume that $T^{-1}(p_\Vx)\in L_2(\partial \Omega)$ and $\VT^{-1}(\Vr_\Vx)\in L_2(\partial \Omega,\VR^2)$. Then, there is a constant $c_{\partial\Omega}>0$, such that 
	\begin{equation}\label{eq:interpolation_rx}
	\|\Vr_\Vx^{h_{\mathcal X,\partial \Omega}} - \Vr_\Vx\|_{H^s(\partial \Omega)}
	\le c_{\partial\Omega} h_{\mathcal X,\partial \Omega}^{s}
	(\Vert \nubf\Vert_{C^k}^2\|\VT^{-1}\Vr_\Vx\|_{L_2(\partial \Omega,\VR^2)}
	+\Vert \taubf\Vert_{C^k}\|T^{-1}p_\Vx\|_{L_2(\partial \Omega)})\,
	\end{equation}	
	for all $h_{\mathcal X, \partial \Omega}\le h_{\partial \Omega}$.
\end{theorem}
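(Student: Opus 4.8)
The plan is to combine the quasi-optimality estimate \eqref{eq:r_x^hquasiopt} with a kernel interpolation error bound that exploits the extra smoothness encoded in the hypotheses $\VT^{-1}\Vr_\Vx\in L_2(\partial\Omega,\VR^2)$ and $T^{-1}p_\Vx\in L_2(\partial\Omega)$. Since $\Cy=\Cx$, both infima in \eqref{eq:r_x^hquasiopt} are best approximations from the same kernel-translate space $V_h(\Cx)$, and under Assumption~\ref{ass:radialkernel} the native-space norm $\|\cdot\|_{[\Ch]^2}$ is equivalent to $\|\cdot\|_{H^s(\partial\Omega,\VR^2)}$, so the left-hand side already coincides with the quantity to be bounded. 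The first step is therefore to rewrite each infimum as an interpolation error: because $\Ci_{V_h}$ (resp.\ $\Ci_{[V_h]^2}$) is the $\Ch$-orthogonal projection onto $V_h(\Cx)$ shown above, one has $\inf_{\psi\in V_h(\Cx)}\|p_\Vx-\psi\|_{\Ch}=\|p_\Vx-\Ci_{V_h}p_\Vx\|_{\Ch}$, and similarly for $\Vr_\Vx$ with $\Ci_{[V_h]^2}$.

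The second step is to identify the range of the integral operator $T$ with the doubled Sobolev space. I would invoke the standard Mercer-type fact that $T^{1/2}\colon L_2(\partial\Omega)\to\Ch(\partial\Omega)$ is an isometric isomorphism onto the native space; iterating, $T=T^{1/2}\circ T^{1/2}$ maps $L_2(\partial\Omega)$ boundedly and boundedly-invertibly onto $H^{2s}(\partial\Omega)$, so that
\begin{equation*}
\|p_\Vx\|_{H^{2s}(\partial\Omega)}\le C\,\|T^{-1}p_\Vx\|_{L_2(\partial\Omega)},\qquad
\|\Vr_\Vx\|_{H^{2s}(\partial\Omega,\VR^2)}\le C\,\|\VT^{-1}\Vr_\Vx\|_{L_2(\partial\Omega,\VR^2)}.
\end{equation*}
In particular the hypotheses guarantee that $p_\Vx$ and $\Vr_\Vx$ possess exactly \emph{twice} the native-space smoothness $s=\xi-1/2$.

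The third, and main, step is the interpolation estimate itself. Using the sampling inequality for Sobolev functions on the curve $\partial\Omega$ together with the $\Ch$-stability of $\Ci_{V_h}$ (as in \cite{FUWR12}), functions of doubled smoothness are approximated at the optimal rate in the native-space norm:
\begin{equation*}
\|f-\Ci_{V_h}f\|_{H^s(\partial\Omega)}\le c_{\partial\Omega}\,h_{\mathcal X,\partial\Omega}^{\,s}\,\|f\|_{H^{2s}(\partial\Omega)}
\qquad\text{for all } h_{\mathcal X,\partial\Omega}\le h_{\partial\Omega},
\end{equation*}
and componentwise for $\Ci_{[V_h]^2}$ (the matrix kernel being $\ksf\VI$, so $\VT$ acts as $T$ on each component). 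I would apply this with $f=p_\Vx$ and $f=\Vr_\Vx$, substitute the resulting $h^s$-bounds into the two infima of \eqref{eq:r_x^hquasiopt}, then replace the $H^{2s}$-norms by the $L_2$-norms of $T^{-1}p_\Vx$ and $\VT^{-1}\Vr_\Vx$ via the second step; absorbing all constants into a single $c_{\partial\Omega}$ yields \eqref{eq:interpolation_rx}.

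The hard part is the interpolation estimate of the third step: the gain of a full factor $h_{\mathcal X,\partial\Omega}^{\,s}$ reflects the superconvergence of native-space interpolation applied to a function with double the native-space smoothness, and its proof rests on the sampling inequality on the manifold $\partial\Omega$—hence the threshold $h_{\partial\Omega}$ and the equivalence \eqref{eq:distance} of geodesic and Euclidean distances—rather than on the mere $\Ch$-optimality used for the density result \cref{thm:rxaredensesubset}. The remaining ingredients, namely the projection identity, the norm equivalence $\|\cdot\|_{[\Ch]^2}\simeq\|\cdot\|_{H^s}$, and the $T^{1/2}$-isometry characterisation of $H^{2s}$, are standard and contribute only to the constant $c_{\partial\Omega}$.
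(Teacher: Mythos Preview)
Your proposal is correct and follows the same route as the paper: combine the quasi-optimality bound \eqref{eq:r_x^hquasiopt} with a kernel-interpolation estimate that exploits the extra smoothness encoded by $T^{-1}p_\Vx\in L_2$ and $\VT^{-1}\Vr_\Vx\in L_2$. The only difference is cosmetic: the paper invokes \cite[Cor.~15]{FUWR12} as a black box, which already states the bound $\|\psi-\Ci_{V_h}\psi\|_{\Ch}\le c\,h_{\mathcal X,\partial\Omega}^{s}\|T^{-1}\psi\|_{L_2(\partial\Omega)}$ directly, whereas you unpack its content via the detour through $H^{2s}$ and the $T^{1/2}$-isometry---a harmless elaboration that is in fact how such results are proved.
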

\begin{proof}
In 	Corollary 15 from \cite{FUWR12} it is shown that there is a constant $c>0$, such that
\ben\label{eq:estimate_interpolate}
\|\Ci_{V_h}(\psi) -\psi\|_{\Ch} \le  c h_{\mathcal X, \partial \Omega}^{s}\|T^{-1}\psi\|_{L_2(\partial \Omega)} \quad \text{ for all } \psi \in \Ch(\partial \Omega)\,.
\een
A similar result holds for the operator $\VT$. Hence replacing
$\varphibf = \Ci_{[ V_h]^2}(\Vr_x)$ and $\psi=\Ci_{ V_h}(p_\Vx)$ 
in \cref{eq:r_x^hquasiopt} and using the estimate \cref{eq:estimate_interpolate}
gives the desired results.
\end{proof}

\begin{corollary}\label{cor:improved_rate}
Let the hypotheses of Theorem~\ref{thm:rxhrate} be satisfied. Then, there is a constant $c$ such that
\begin{equation}
\|\Vr_\Vx^{h_{\mathcal X,\partial \Omega}} - \Vr_\Vx\|_{L_2(\partial \Omega,\VR^2)} \le c h_{\mathcal X,\partial \Omega}^{2s}\,
\end{equation}	
for all $h_{\mathcal X, \partial \Omega}\le h_{\partial \Omega}$.
\end{corollary}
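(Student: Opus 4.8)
The plan is to run an Aubin--Nitsche duality argument. Since \cref{thm:rxhrate} already gives the rate $h^{s}$ in the stronger $\Ch$-norm for $\Vr_\Vx^h-\Vr_\Vx$ and (by the analogous multiplier estimate) for $p_\Vx^h-p_\Vx$, it suffices to gain one extra power of $h_{\Cx,\partial\Omega}^{s}$ by testing the $L_2$-error against the solution of a dual saddle-point problem and then interpolating that dual solution. Write $\Ve_r:=\Vr_\Vx^h-\Vr_\Vx$ and $\Ve_p:=p_\Vx^h-p_\Vx$, and abbreviate $h:=h_{\Cx,\partial\Omega}$; recall that here $\Cy=\Cx$, so interpolants onto $V_h(\Cx)$ are admissible test functions in both \eqref{eq:1RKHSh22} and \eqref{eq:2RKHSh22}. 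I would represent the $L_2$-norm by duality, $\|\Ve_r\|_{L_2(\partial\Omega,\VR^2)}=\sup\{(\Ve_r,\Vg)_{L_2(\partial\Omega,\VR^2)}:\|\Vg\|_{L_2(\partial\Omega,\VR^2)}=1\}$, and bound $(\Ve_r,\Vg)_{L_2(\partial\Omega,\VR^2)}$ for a fixed such $\Vg$.

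For fixed $\Vg$, let $(\Vw,q)\in[\Ch(\partial\Omega)]^2\times\Ch(\partial\Omega)$ solve the adjoint problem
\begin{align*}
(\varphibf,\Vw)_{[\Ch]^2}+(\varphibf\cdot\taubf,q)_{\Ch}&=(\varphibf,\Vg)_{L_2(\partial\Omega,\VR^2)} &&\text{for all }\varphibf\in[\Ch(\partial\Omega)]^2,\\
(\Vw\cdot\taubf,\psi)_{\Ch}&=0 &&\text{for all }\psi\in\Ch(\partial\Omega).
\end{align*}
Because $(\cdot,\cdot)_{[\Ch]^2}$ is symmetric and $B$ is the same operator as in \cref{lem:B_surjective}, this problem is well posed by the argument used for \cref{lem:rxwelldefined}; moreover the second equation forces $\Vw\cdot\taubf=0$, i.e. $\Vw\in[\Ch(\partial\Omega)]^2_{\nubf}$. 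Testing the first dual equation with $\varphibf=\Ve_r$ gives $(\Ve_r,\Vg)_{L_2(\partial\Omega,\VR^2)}=(\Ve_r,\Vw)_{[\Ch]^2}+(\Ve_r\cdot\taubf,q)_{\Ch}$. Inserting the Galerkin orthogonality \eqref{eq:saddlepoint_orthogonal}, subtracting $\Vw_h:=\Ci_{[V_h]^2}(\Vw)$ and $q_h:=\Ci_{V_h}(q)$, using \eqref{eq:1RKHSh22} with $\varphibf=\Vw_h$, \eqref{eq:2RKHSh22} with $\psi=q_h$, and the identity $\Vw\cdot\taubf=0$, one obtains
\begin{equation*}
(\Ve_r,\Vg)_{L_2(\partial\Omega,\VR^2)}=(\Ve_r,\Vw-\Vw_h)_{[\Ch]^2}+((\Vw-\Vw_h)\cdot\taubf,\Ve_p)_{\Ch}+(\Ve_r\cdot\taubf,q-q_h)_{\Ch}.
\end{equation*}

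Cauchy--Schwarz together with the module estimate \eqref{eq:module} (exactly as in \cref{lem:B_surjective}) then bounds the right-hand side by
\begin{equation*}
\big(\|\Ve_r\|_{[\Ch]^2}+c_\Omega\|\taubf\|_{C^k}\|\Ve_p\|_{\Ch}\big)\|\Vw-\Vw_h\|_{[\Ch]^2}+c_\Omega\|\taubf\|_{C^k}\|\Ve_r\|_{[\Ch]^2}\|q-q_h\|_{\Ch}.
\end{equation*}
The factors $\|\Ve_r\|_{[\Ch]^2}$ and $\|\Ve_p\|_{\Ch}$ are $O(h^{s})$ by \cref{thm:rxhrate} and its multiplier analogue. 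For the dual factors I would apply the interpolation estimate \eqref{eq:estimate_interpolate} and its vector-valued counterpart, giving $\|\Vw-\Vw_h\|_{[\Ch]^2}\le c\,h^{s}\|\VT^{-1}\Vw\|_{L_2(\partial\Omega,\VR^2)}$ and $\|q-q_h\|_{\Ch}\le c\,h^{s}\|T^{-1}q\|_{L_2(\partial\Omega)}$. Hence each of the three products carries two factors $h^{s}$, and taking the supremum over unit $\Vg$ yields $\|\Ve_r\|_{L_2(\partial\Omega,\VR^2)}\le c\,h^{2s}$; the rate doubles precisely because \eqref{eq:estimate_interpolate} is used once on the primal error and once on the dual solution.

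The main obstacle is the dual regularity needed to apply \eqref{eq:estimate_interpolate} to $(\Vw,q)$: one must show that for $\Vg\in L_2(\partial\Omega,\VR^2)$ the dual solution satisfies $\VT^{-1}\Vw\in L_2(\partial\Omega,\VR^2)$ and $T^{-1}q\in L_2(\partial\Omega)$ with norms controlled by $\|\Vg\|_{L_2(\partial\Omega,\VR^2)}$. Using the adjoint identity $(\varphibf,\Vg)_{L_2(\partial\Omega,\VR^2)}=(\varphibf,\VT\Vg)_{[\Ch]^2}$ and eliminating $\Vw$ from the dual system gives $q=(BB^*)^{-1}B\VT\Vg$ and $\Vw=\VT\Vg-B^*q$, so the claim reduces to showing that $B^*$ and $(BB^*)^{-1}$ preserve the smoother class $H^{2s}(\partial\Omega)=T(L_2(\partial\Omega))$ (one kernel-smoothing order above $\Ch(\partial\Omega)=H^{s}(\partial\Omega)$ in the setting of \cref{ass:radialkernel}). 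Since $\VT$ smooths by order $2s$ and $B,B^*$ are order-zero operators built from the $C^k$ field $\taubf$ through \eqref{eq:module}, this is plausible: the coercivity \eqref{eq:coercivity_Bt} already gives boundedness of $(BB^*)^{-1}$ on $\Ch$, and the genuine work is to propagate that invertibility to the $H^{2s}$ scale. This elliptic-type shift for the saddle-point operator is the single step I expect to require real care; once it is in place, the estimate above closes.
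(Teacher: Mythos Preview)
Your Aubin--Nitsche route is \emph{not} what the paper does. The paper's argument is a two-line combination: it quotes the sampling inequality \cite[Lem.~10]{FUWR12}, stated there for functions vanishing on the node set $\Cx$ (equivalently, for interpolation errors),
\[
\|\varphibf-\Ci_{[V_h]^2}\varphibf\|_{L_2(\partial\Omega,\VR^2)}\le c\,h_{\Cx,\partial\Omega}^{s}\,\|\varphibf-\Ci_{[V_h]^2}\varphibf\|_{[\Ch]^2},
\]
and then ``combines'' it with the $H^{s}$-estimate \eqref{eq:interpolation_rx}. This is much shorter than your duality argument, but it silently treats $\Vr_\Vx^h$ as if it were the interpolant $\Ci_{[V_h]^2}\Vr_\Vx$. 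In fact $\Vr_\Vx^h-\Vr_\Vx$ does \emph{not} vanish on $\Cx$: subtracting \eqref{eq:1RKHS2} and \eqref{eq:1RKHSh2} with $\varphibf=\ksf(\Vx_i,\cdot)\Ve_j$ gives $(\Vr_\Vx^h-\Vr_\Vx)(\Vx_i)\cdot\Ve_j=-(\tau_j\,\ksf(\Vx_i,\cdot),p_\Vx^h-p_\Vx)_{\Ch}$, which is nonzero unless $\taubf$ is constant. So the sampling inequality does not apply directly to $\Vr_\Vx-\Vr_\Vx^h$, and the triangle-inequality repair via $\Ci_{[V_h]^2}\Vr_\Vx$ leaves the discrete piece $\|\Ci_{[V_h]^2}\Vr_\Vx-\Vr_\Vx^h\|_{L_2}$ bounded only by $O(h^{s})$, not $O(h^{2s})$.

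Your duality approach is the standard and correct way to double the rate for a genuine Galerkin (rather than interpolation) error, and your error representation and use of \eqref{eq:saddlepoint_orthogonal} are set up properly. The honest gap you flag---that $(\Vw,q)$ inherits one extra order of smoothness, i.e.\ $\VT^{-1}\Vw\in L_2$ and $T^{-1}q\in L_2$ with norms controlled by $\|\Vg\|_{L_2}$---is exactly the saddle-point analogue of elliptic regularity and is where the work lies. Your reduction to mapping properties of $B^*$ and $(BB^*)^{-1}$ on $H^{2s}(\partial\Omega)$ is the right way to phrase it; establishing that shift (using $\taubf\in C^k$ with $k\ge 2s$ and the explicit form $B^*\psi$ acting as multiplication by $\taubf$ in the $\Ch$-pairing) would close your argument, whereas the paper's shortcut does not obviously close at all.
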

\begin{proof}
Applying \cite[Lem. 10]{FUWR12} yields
\ben
\|\varphibf - \Ci_{[V_h]^2}(\varphibf)\|_{L_2(\partial \Omega, \VR^2)} \le c h_{\mathcal X, \partial \Omega}^s\|\varphibf - \Ci_{[V_h]^2}(\varphibf)\|_{\Ch}
\een
for all $\varphibf \in [\Ch(\partial \Omega)]^2$
(and similarly for $\Ci_{V_h}$). Hence combining this inequality with \cref{eq:interpolation_rx} gives the desired estimate. 
\end{proof}

\begin{numexp}\label{numexp:rx_errors}
We perform a numerical experiment to verify the convergence rates derived in
\cref{thm:rxhrate}. We choose $\partial\Omega$ to be the ellipse depicted in
\cref{fig:plotrx} (left) and consider the sequence of subsets
\begin{equation}
\Cx_N = \{\gamma(2\pi\ell /N): \ell = 1,\dots, N\} \quad \text{with}\quad N=2^{4},2^{5},\dots,2^{10}\,.
\end{equation}
Then, for each $N$, we compute $\Vr^{h_N}_{\gamma(0)}$, where $h_N$ denotes the fill distance of $\Cx_N$.
Finally, we compute (with sufficiently many quadrature points) the $L_{2}(\partial\Omega)$-absolute error using $\Vr^{h_{2^{10}}}_{\gamma(0)}$
as reference solution. In \cref{fig:rx_errors}, we plot these $L_{2}$-errors versus $h_N$
(measured with the Euclidean distance) for the kernels $\ksf_4^{1.2}$, $\ksf_6^{1.2}$, and $\ksf_8^{1.2}$.
The measured convergence rates read (approximatively) 4.51, 6.79, and 8.77, respectively, and are 
slightly better but still in line
with \cref{cor:improved_rate}. Similar comparable results are obtained for the nonconvex domain
depicted in \cref{fig:plotrx} (left), as well as for different values of $\sigma$.
\end{numexp}
\begin{figure}[htb!]
\centering
\includegraphics{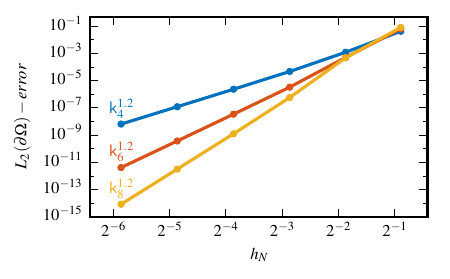}
\caption{\cref{numexp:rx_errors} confirms the convergence rates predicted by \cref{cor:improved_rate}:
the error $\|\Vr_\Vx^{h_N} - \Vr_\Vx\|_{L_{2}(\partial \Omega)}$ decays with the algebraic rate
4.51 (for $\ksf_4^{1.2}$), 6.79 (for $\ksf_6^{1.2}$), and 8.77 (for $\ksf_8^{1.2}$).}
\label{fig:rx_errors}
\end{figure}

\begin{remark}
It is known from scattered data approximation theory \cite{MR1325034} that the condition number of
$(\ksf(\Vx_i,\Vx_j))_{i,j=1}^{n}$ can be very large for certain collections of points $\{\Vx_i\}_{i=1}^n$.
We have computed the condition number of the saddle point matrix from \cref{eq:discretesaddlepoint2}
for different $\sigma$s and $h_N$ and have observed that
,similarly to the condition number of $(\ksf(\Vx_i,\Vx_j))_{i,j=1}^{n}$,
it behaves roughly as $(\sigma/ h_N)^p$,
where $p$ is the polynomial order of the Wendland's kernel.
{Note that by
combining Gershgorin's theorem \cite[Ch. 12, see p. 207]{Wendlandbook}
and the result listed in \cite[Ch. 12.2, Table 12.1]{Wendlandbook},
it is possible to show that the condition number of $(\ksf(\Vx_i,\Vx_j))_{i,j=1}^{n}$
is bounded from above by $C(h_N)^{-p}$, where $C$ is a positive constant.}
\end{remark}

\section{Application to shape Newton methods}\label{sec:Newton}
{The formulation of shape Newton methods is a delicate task, and
not all second-order shape derivatives are suited for this job.
The study of second-order shape derivatives is out of the scope of this work,
and we refer an interested reader to \cite{sturm3, Sc17}.}
In this section, we briefly recall the shape Newton method introduced in \cite{sturm3},
discuss its discretisation with the weakly-normal functions $\Vr_\Vx$s, and
perform some numerical simulations for an unconstrained shape optimisation test case.
This test case is kept simple on purpose in order to restrict the discretisation error
to the use of weakly-normal basis functions.
In this section, we use $\Cx=\Cy$ in \cref{def:rxh}.

\subsection{Shape derivatives and their structure theorem}
We begin by recalling the definition of shape derivatives
and their structure theorem from \cite{sturm3}.
For a given set $\Dsf\subset \VR^2$, we denote by $\wp(\Dsf)$ the powerset of $\Dsf$. 
{
Furthermore we use the notation $\ac C^1(\overbar\Dsf,\VR^d)$ to indicate the
Banach space of functions $\VX\in C^1(\overbar \Dsf,\VR^d)$ that satisfy $\VX=0$
on $\partial \Dsf$. }
\begin{definition}\label{def1} 
Let $J:\Xi\subset \wp(\Dsf)\rightarrow \VR$ be a shape function,
and let  $\VX,\VY \in \ac C^1(\overbar\Dsf ,\VR^2)$ be two vector fields.\\
(i)  The directional derivative of $J$ at $\Omega\in \Xi$ in the direction $\VX$ is defined by
\ben
DJ(\Omega)(\VX)\coloneqq \lim_{t \to 0^+}\frac{J((\id+t\VX)(\Omega))-J(\Omega)}{t}.
\een
(ii)  The second directional derivative of
$J$ at $\Omega$ in the direction $(\VX,\VY)$ is defined by
\ben
\FD^2J(\Omega)(\VX)(\VY) = \lim_{t \to 0^+}\frac{DJ((\id+t\VY)(\Omega))(\VX\circ (\id+t\VY)^{-1})-DJ(\Omega)(\VX)}{t},
\een      
\indent(assuming that $DJ((\id+t\VY)(\Omega))(\VX\circ (\id+t\VY)^{-1})$ exists for all small $t$).
\end{definition}
\begin{remark}%[Symmetry of $\FD^2J(\Omega)$]
{
Assume that the map
$J_\Omega: \ac C^1(\overbar\Dsf,\VR^d) \to \VR$, $\VX \mapsto %J_\Omega(\VX) := 
J((\id+\VX)(\Omega))$ % on $\ac C^1(\overbar\Dsf,\VR^d)$
is twice Fr\'echet differentiable at $0$.
Then, the bilinear operator $(\VX,\VY)\mapsto \FD^2J(\Omega)(\VX)(\VY)$
is the second derivative of $J_\Omega$ at $0$, and thus symmetric \cite[Thm 5.2, pp. 189]{b_AMES_II_2006a}.
}
\end{remark}
We denote by $\nabla^{\taubf} f$ the tangential gradient of function $f$ and by
$\VX_{\taubf}\coloneqq \VX|_{\partial \Omega}- (\VX|_{\partial \Omega}\cdot \nubf) \nubf$
the tangential component of a vector field $\VX$ on $\partial \Omega$.
The shape derivatives $DJ$ and $\FD^2J$ are characterised by the
following structure theorem (see \cite{sturm3,a_NOPI_2002a}).

\begin{theorem}\label{thm:structure_second}
Assume that $J$ is twice differentiable at $\Omega$ and assume that $\Omega$ is of class $C^3$.
Then, there are continuous functionals $\Fg: C^1(\partial \Omega) \to \VR$ and $\Fh:C^1(\partial \Omega)\times C^1(\partial \Omega) \to \VR$, such that
\begin{equation}\label{eq:struc_second_beforetwo}
DJ(\Omega)(\VX) = \Fg (\VX|_{\partial \Omega}\cdot \nubf)
\qquad\qquad \text{for all } \VX\in \ac C^1(\overbar\Dsf,\VR^2)\,,
\end{equation} 
and 
\begin{equation}\label{eq:struc_second_two}
\begin{split}
\mathfrak D^2J(\Omega)(\VX)(\VY) =&  \Fh(\VX_{|_{\partial \Omega}}\cdot 
\nubf, \VY_{|_{\partial \Omega}}\cdot \nubf) - 
\Fg(\nubf \cdot\partial^{\taubf}\VX_{\taubf}\VY_{\taubf})\\
%\Fg(\partial^{\taubf}\VX_{\taubf}\VY_{\taubf}\cdot \nubf )\\
&-	\Fg(\nabla^{\taubf}(\VY\cdot \nubf)\cdot \VX_{\taubf}) - \Fg(\nabla^{\taubf}(\VX\cdot \nubf)\cdot \VY_{\taubf})
\end{split}
\end{equation}
for all $\VX,\VY\in \ac C^2(\overbar\Dsf,\VR^2)$.
\end{theorem}

{
\begin{remark}
The function $\Fh$ corresponds to a Banach space version of the Riemannian Hessian 
on shape spaces used in \cite[Def. 2.2, p. 488]{Schulz1}.  
\end{remark}
}

\begin{remark}
	Notice that \cref{eq:struc_second_two} is equivalent to (see  \cite{a_NOPI_2002a})
	\ben
	\mathfrak D^2J(\Omega)(\VX)(\VY) =  \mathfrak h(\VX_{|_{\partial \Omega}}\cdot 
	\nubf, \VY_{|_{\partial \Omega}}\cdot \nubf) - \Fg( \nubf \cdot \partial^{\taubf}\VY \VX_{\taubf}) -  \Fg( \VY_{\taubf}\cdot \partial^{\taubf}\nubf \VX_{\taubf}) - \Fg(\nubf\cdot \partial^{\taubf}\VX \VY_{\taubf}).
	\een
\end{remark}

\subsection{Shape Newton descent directions and their approximation}
In this section, we define the shape Newton equation and its approximation
with weakly-normal basis functions. For a particular test case, we derive
quasi-optimality of the approximate Newton update and verify the resulting
convergence rates with a numerical experiment.
Henceforth, $\Omega\subset \VR^2$ is a set with $C^{k+1}$-boundary (with $k\ge 1$),
and $\mathcal H(\VR^2)$ is a RKHS that satisfies \cref{ass:one}.

\begin{definition}
The $\Ch(\VR^2)$-Newton descent direction at $\Omega$ is the solution
$\VX_{\nubf}\in [\mathcal H(\partial \Omega)]_{\nubf}$ of
 \begin{equation}\label{eq:newton_infinite}
 \FD^2J(\Omega)(\VX_{\nubf})(\VY) = - DJ(\Omega)(\VY) \quad \text{ for all } \VY\in [\Ch(\partial \Omega)]^2_{\nubf}.
 \end{equation}
\end{definition}
 
\begin{remark}
If $J$ satisfies the assumptions of \cref{thm:structure_second}
and {the functions in } $\mathcal H(\partial \Omega)$
{are sufficiently smooth (for instance, }
$\mathcal H(\partial \Omega)\subset C^2(\partial \Omega)$), then
{formulas \eqref{eq:struc_second_beforetwo} and \eqref{eq:struc_second_two} satisfy}
\begin{equation*}
\mathfrak D^2J(\Omega)(\Vr_\Vx)(\Vr_\Vy) = \Fh(\Vr_\Vx\cdot 
\nubf, \Vr_\Vy\cdot \nubf)
\quad \text{and} \quad 
DJ(\Omega)(\Vr_\Vy)  = \Fg(\Vr_\Vy\cdot \nubf)
\quad \text{for all }\Vx,\Vy\in \partial \Omega,
\end{equation*}
{because item \eqref{it:normal} from \cref{thm:propsofrx} 
implies $(\Vr_\Vx)_{\taubf} = (\Vr_\Vy)_{\taubf} = 0$.}
This, in light of \cref{thm:rxaredensesubset}, implies that \cref{eq:newton_infinite} is equivalent to
\ben\label{eq:NewtonBdry}
\Fh(\VX_{\nubf}\cdot 
\nubf, \Vr_\Vy\cdot \nubf) = - \Fg(\Vr_\Vy\cdot \nubf) \quad \text{ for all } \quad\Vy\in \partial \Omega. 
\een
Since $\Fh$ and $\Fg$ are (bi)linear and continuous in $C^1(\VR^2)$, equation \cref{eq:NewtonBdry}
can also be used to define $\Ch(\VR^2)$-Newton descent direction for RKHS that satisfy only
$\Ch(\VR^2)\subset C^1(\VR^2)$. For instance, as mentioned in \cref{ex:kernel}, the native space
of the kernel $\ksf_2^\sigma$ is $\Ch (\VR^2) = H^{2.5}(\VR^2)$, and
$H^{2.5}(\VR^2) \subset C^{1,1/2}(\VR^2)$ by the Sobolev embedding theorem
(where 
$C^{1,1/2}(\VR^2)$ denotes the usual H\"older space). 
\end{remark}

In general, it can be difficult to compute an \orange{exact} %explicit
solution of \cref{eq:newton_infinite}
because $\FD^2J$ and $DJ$ are typically given by very complicated formulas. For instance,
when $J$ is constrained by a PDE, the formula of $\FD^2J$
contains the material derivative of the solution of the PDE-constraint \cite{DeZo91}.
Nevertheless, approximate $\Ch(\VR^2)$-Newton descent directions can be computed
by discretising \cref{eq:newton_infinite} with the weakly-normal basis functions $\Vr_\Vx$s.

\begin{definition}
Let $\mathcal X_N=\{\Vx_1,\ldots, \Vx_N\}\subset \partial \Omega$
be a collection of pairwise distinct points, and let
$h = h_{\mathcal X_N,\partial \Omega}$ denote its fill distance.
The approximate $\Ch(\VR^2)$-Newton descent direction at $\Omega$ with respect to $\Ch(\partial \Omega)$
is the solution $\VX_{\nubf}^h\in \Cr_N\coloneqq\mathrm{span}\{\Vr_{\Vx}: \Vx \in \Cx_N\}$ of
\begin{equation}\label{eq:newton_discrete}
\mathfrak D^2J(\Omega)(\VX_{\nubf}^h)(\orange{\VY^h}) = - DJ(\Omega)(\orange{\VY^h}) \quad \text{ for all } \orange{\VY^h}\in \Cr_N.
\end{equation}
\end{definition}

\begin{example}\label{ex:unconstrained}
We consider the shape function $J(\Omega) = \int_{\Omega}f\; dx$ with $f\in C^2(\VR^2)$. This shape function $J$ can be used as a regularisation in image segmentation; see \cite{MR2049659}. Its shape derivatives read \cite{sturm3}
\begin{align}
\label{eq:dJlevelset}
DJ(\Omega)(\VX) = &\int_{\partial \Omega} f \VX\cdot \nubf\; ds\,,\\
\mathfrak D^2J(\Omega)(\VX)(\VY) =& \int_{\partial \Omega}  ( \nabla f\cdot \nubf + \kappa f) (\VX\cdot \nubf)(\VY\cdot \nubf)\; ds- \int_{\partial \Omega} f\nubf
\cdot \partial^{\taubf}\VX_{\taubf}\VY_{\taubf}\;ds \\
\nonumber
&-  \int_{\partial \Omega} f\nabla^{\taubf} (\VY\cdot \nubf)\cdot \VX + f\nabla^{\taubf} (\VX\cdot \nubf)\cdot \VY\; ds,
\end{align}
where $\kappa$ denotes the curvature of $\partial \Omega$. 
Hence the $\Ch(\VR^2)$-Newton descent direction at $\Omega$ is the solution
$\VX_{\nubf}\in [\mathcal H(\partial \Omega)]_{\nubf}$ of
\begin{equation}\label{eq:newton_infinite_example}
\int_{\partial \Omega}  ( \nabla f\cdot \nubf + \kappa f) (\orange{\VX}_{\nubf}\cdot \nubf)(\orange{\VY}\cdot \nubf)\; ds = - \int_{\partial \Omega} f (\orange{\VY}\cdot \nubf) \; ds \quad \text{ for all } \orange{\VY} \in [\mathcal H(\partial \Omega)]_{\nubf}.
\end{equation}
Note \orange{that}, %the
when it exists, the solution of \cref{eq:newton_infinite_example} is
$\VX_{\nubf} = (-f/(\nabla f\cdot \nubf+ \kappa f))\nubf$.
Clearly, it is possible to approximate $\VX_{\nubf}$
by interpolating it on $\Cr_N$. However, note that this
interpolant would not satisfy \cref{eq:newton_discrete}.
{ Finally, we point out that \cref{eq:dJlevelset}
implies that zero-level sets of $f$ are local optima for this shape optimization
problems. This implies that, close to the optimum, the term $\kappa f$
in \cref{eq:newton_infinite_example} plays a minor role compared to
$\nabla f\cdot \nubf$. For this reasons, 
some authors omit the term $\kappa f$ in the computation of Newton updates \cite{sturm3, Schulz1}.
}
\end{example}

\cref{prop:Xnuhquasioptimal} shows quasi-optimality of the approximate
shape Newton updates when the shape derivatives are as in \cref{ex:unconstrained}.
\begin{proposition}\label{prop:Xnuhquasioptimal}
Let $J(\Omega) = \int_{\Omega}f\; dx$ with $f\in C^2(\VR^2)$, and further assume that $\nabla f\cdot \nubf + \kappa f > 0$ on
$\partial \Omega$. Then, the solution
$\VX_{\nubf}^h\in \Cr_N$ of \cref{eq:newton_discrete}
satisfies
\begin{equation}\label{eq:approxerror_Xnu}
\Vert \VX_{\nubf} - \VX_{\nubf}^h\Vert_{L_2(\partial\Omega)} \leq c_{f,\Omega}
\inf_{\VY\in \Cr_N}
\Vert \VX_{\nubf} - \VY \Vert_{L_2(\partial\Omega)}\,.
\end{equation}
\end{proposition}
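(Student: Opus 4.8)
The plan is to read \cref{eq:newton_infinite} and its Galerkin discretisation \cref{eq:newton_discrete} as a symmetric variational problem and to deduce \cref{eq:approxerror_Xnu} from C\'ea's lemma. First I would set $a(\VX,\VY)\coloneqq\FD^2J(\Omega)(\VX)(\VY)$ and restrict it to normal fields $\VX,\VY\in[\Ch(\partial\Omega)]^2_\nubf$. For such fields $\VX_\taubf=\VY_\taubf=0$, so the term $\int_{\partial\Omega}f\,\nubf\cdot\partial^\taubf\VX_\taubf\VY_\taubf\,ds$ from \cref{ex:unconstrained} vanishes; moreover, since a tangential gradient is orthogonal to $\nubf$ and $\VX=(\VX\cdot\nubf)\nubf$, the two remaining correction terms $\int_{\partial\Omega}f\nabla^\taubf(\VY\cdot\nubf)\cdot\VX\,ds$ and $\int_{\partial\Omega}f\nabla^\taubf(\VX\cdot\nubf)\cdot\VY\,ds$ vanish as well. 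This is precisely the reduction recorded in the remark following \cref{eq:NewtonBdry}, and it leaves
\begin{equation*}
a(\VX,\VY)=\Fl(\VX\cdot\nubf,\VY\cdot\nubf)=\int_{\partial\Omega}(\nabla f\cdot\nubf+\kappa f)(\VX\cdot\nubf)(\VY\cdot\nubf)\,ds\,.
\end{equation*}

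I would then verify the three hypotheses of C\'ea's lemma in the $L_2(\partial\Omega)$-norm. A normal field obeys $\VX=(\VX\cdot\nubf)\nubf$ with $|\nubf|=1$, hence $\|\VX\|_{L_2(\partial\Omega)}^2=\int_{\partial\Omega}(\VX\cdot\nubf)^2\,ds$. Because $\partial\Omega$ is compact and $\nabla f\cdot\nubf+\kappa f$ is continuous and strictly positive by assumption, there are constants $0<\alpha\le\beta<\infty$ with $\alpha\le\nabla f\cdot\nubf+\kappa f\le\beta$ on $\partial\Omega$. These bounds give coercivity $a(\VX,\VX)\ge\alpha\|\VX\|_{L_2(\partial\Omega)}^2$ and, via Cauchy--Schwarz, continuity $|a(\VX,\VY)|\le\beta\|\VX\|_{L_2(\partial\Omega)}\|\VY\|_{L_2(\partial\Omega)}$, while symmetry is evident. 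In particular $a$ is an inner product on normal fields, so the matrix $(a(\Vr_{\Vx_i},\Vr_{\Vx_j}))_{i,j}$ is positive-definite (the $\Vr_{\Vx_i}$ are linearly independent by \cref{lem:linearindepenent}); this secures existence and uniqueness of $\VX_\nubf^h$.

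Finally, since $\Cr_N\subset[\Ch(\partial\Omega)]^2_\nubf$ by \cref{it:normal}, I may test \cref{eq:newton_infinite} with any $\VY\in\Cr_N$ and subtract \cref{eq:newton_discrete} to obtain the Galerkin orthogonality $a(\VX_\nubf-\VX_\nubf^h,\VY)=0$ for all $\VY\in\Cr_N$. The C\'ea chain then reads, for every $\VY\in\Cr_N$,
\begin{align*}
\alpha\|\VX_\nubf-\VX_\nubf^h\|_{L_2(\partial\Omega)}^2
&\le a(\VX_\nubf-\VX_\nubf^h,\VX_\nubf-\VX_\nubf^h)
= a(\VX_\nubf-\VX_\nubf^h,\VX_\nubf-\VY)\\
&\le\beta\,\|\VX_\nubf-\VX_\nubf^h\|_{L_2(\partial\Omega)}\,\|\VX_\nubf-\VY\|_{L_2(\partial\Omega)}\,,
\end{align*}
where orthogonality was used in the middle equality because $\VY-\VX_\nubf^h\in\Cr_N$. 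Dividing by $\|\VX_\nubf-\VX_\nubf^h\|_{L_2(\partial\Omega)}$ and taking the infimum over $\VY\in\Cr_N$ yields \cref{eq:approxerror_Xnu} with $c_{f,\Omega}=\beta/\alpha$. The only delicate point is that coercivity holds merely in the $L_2(\partial\Omega)$-norm rather than in the stronger $\Ch$-norm; this is exactly why the conclusion is phrased in $L_2$, and it causes no difficulty because every step above---the positive-definiteness giving $\VX_\nubf^h$, the orthogonality, and the C\'ea estimate---lives entirely in $L_2(\partial\Omega)$ and uses only that $\Cr_N$ consists of normal fields. Well-posedness of the continuous direction $\VX_\nubf$ itself is inherited from \cref{ex:unconstrained}, where it is exhibited explicitly as $-f/(\nabla f\cdot\nubf+\kappa f)\,\nubf$.
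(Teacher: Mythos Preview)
Your argument is correct and is essentially the same as the paper's: both reduce the Newton equation on normal fields to the symmetric, $L_2(\partial\Omega)$-coercive bilinear form $\int_{\partial\Omega}(\nabla f\cdot\nubf+\kappa f)(\,\cdot\,)(\,\cdot\,)\,ds$ and apply C\'ea's lemma. The only cosmetic difference is that the paper first passes to the scalar variable $z=\VX\cdot\nubf$ and the scalar subspace $\Cz_N=\{\VX\cdot\nubf:\VX\in\Cr_N\}$, applies C\'ea there, and then translates back via $\VX_\nubf=z\nubf$, $\VX_\nubf^h=z_h\nubf$; you carry out the identical estimate directly on vector fields using $\|\VX\|_{L_2(\partial\Omega)}=\|\VX\cdot\nubf\|_{L_2(\partial\Omega)}$ for normal $\VX$.
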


\begin{proof}
The bilinear form
\begin{equation}
L_2(\partial\Omega)\times L_2(\partial\Omega) \to \bbR\,,
\quad
(x,y)\mapsto \int_{\partial \Omega}  ( \nabla f\cdot \nubf + \kappa f) xy\; ds
\end{equation}
is $L_2(\partial\Omega)$-continuous,
symmetric, and $L_2(\partial\Omega)$-elliptic.
Let $\Cz_N\coloneqq\mathrm{span}\{\VX\cdot\nubf: \VX \in \Cr_N\}$.
Clearly, $\Cz_N\subset L_2(\partial\Omega)$.
Therefore, the solution $z_h\in\Cz_N$ of
\begin{equation}
\int_{\partial \Omega}  ( \nabla f\cdot \nubf + \kappa f) z_h y\; ds
=  - \int_{\partial \Omega} f y \; ds \quad \text{ for all } y \in\Cz_N \end{equation}
satisfies
\begin{equation}
\Vert z - z_h\Vert_{L_2(\partial\Omega)} \leq c_{f,\Omega}
\inf_{y\in \Cz_N}
\Vert z - y \Vert_{L_2(\partial\Omega)}\,,
\end{equation}
where $z\in L_2(\partial\Omega)$ is the solution of
\begin{equation}\label{eq:Newton_z}
\int_{\partial \Omega}  ( \nabla f\cdot \nubf + \kappa f) z y\; ds
=  - \int_{\partial \Omega} f y \; ds \quad \text{ for all } y \in L_2(\partial\Omega)\,.
\end{equation}
Finally, $\VX_{\nubf}^h = z_h\nubf$, because $\Cr_N$ satisfies
$\Cr_N = \{z\nubf: z \in \Cz_N\}$,
and $\VX_{\nubf}= z\nubf$, because the
solution of \cref{eq:Newton_z} is
$z=-f/(\nabla f\cdot \nubf+ \kappa f)$.
\end{proof}

\begin{numexp}\label{numexp:Xnu_errors}
We perform a numerical experiment to investigate
the approximation error \cref{eq:approxerror_Xnu} for
$f(\Vx) = \vert \Vx \vert^2-1$.
We choose $\partial\Omega$ to be the ellipse depicted in
\cref{fig:plotrx} (left) and consider the sequence of subsets
\begin{equation}
\Cx_N = \{\gamma(2\pi\ell /N): \ell = 1,\dots, N\} \quad \text{with}\quad N=2^{4},2^{5},\dots,2^{8}\,.
\end{equation}
For each $N$, we construct the finite dimensional space
$\widetilde \Cr_{N}\coloneqq\mathrm{span}\{\Vr^{h_N}_{\Vx}: \Vx \in \Cx_N\}$
(where $h_N$ denotes the fill distance of $\Cx_N$),
and compute the solution $\widetilde\VX_{\nubf}^{h_N} \in \widetilde \Cr_N$
of
\begin{equation}\label{eq:newton_finite_example}
\int_{\partial \Omega}  ( \nabla f\cdot \nubf + \kappa f) (\widetilde\VX_{\nubf}^{h_N}\cdot \nubf)(\orange{\VY^{h_N}}\cdot \nubf)\; ds = - \int_{\partial \Omega} f (\orange{\VY^{h_N}}\cdot \nubf) \; ds \quad \text{ for all } \orange{\VY^{h_N}} \in \widetilde \Cr_N.
\end{equation}
\orange{Although the normal vector field $\nubf$ and curvature $\kappa$ can be computed exactly,
for the ease of implementation we replace the curve $\gamma$ with a highly accurate interpolant
(using Chebfun \cite{chebfun}) and evaluate $\nubf$ and $\kappa$ of this interpolant. For a result on this kind of
approximation, see \cite{Su17}.}
Finally, we compute (with sufficiently many quadrature points) the $L_{2}(\partial\Omega)$-absolute error
$\Vert \VX_{\nubf} - \widetilde\VX_{\nubf}^{h_N}\Vert_{L_2(\partial\Omega)}$.
In \cref{fig:Xnu_errors}, we plot this $L_{2}$-errors versus $h_N$
(measured with the Euclidean distance) for the kernels
$\ksf_4^{0.7}$, $\ksf_6^{0.7}$, and $\ksf_8^{0.7}$.
The measured convergence rates read (approximatively) 4.20, 5.98,
and 8.20 (when $h_N>2^{-4}$), respectively. 
Similar compatible results are obtained for the nonconvex domain
depicted in \cref{fig:plotrx} (left), as well as for different values of $\sigma$.
To intepret these results, let us first point out that
$\widetilde \Cr_N$ is only an approximation of
$\Cr_N$.
It may not be so simple to prove a proposition for $\widetilde \Cr_N$
that is analogous to \cref{prop:Xnuhquasioptimal}, because $\widetilde \Cr_N$
does not contain the image of the operator $\VX \mapsto (\VX\cdot\nubf)\nubf$.
Note also that it is true that $\Vr_\Vx^{h_N}$ converges to $\Vr_\Vx$ as $h_N\to 0$,
but \cref{thm:rxhrate} does not guarantee that this convergence is uniform in $\Vx$.
However, if we assume that a proposition analogous to \cref{prop:Xnuhquasioptimal}
exists for $\widetilde \Cr_N$, we can explain the measured convergence rates with
\cite[Cor. 15]{FUWR12}, because interpolating a vector field $\Vf$
into $\widetilde \Cr_N$ or into $\Ch(\Cx_N)\times\Ch(\Cx_N)$
returns exactly the same interpolant if the vector field $\Vf$ is normal to $\partial\Omega$.

Finally, the convergence history saturates for $\ksf_8^{0.7}$ because the
condition number of the discretised shape Hessian is very large for $N=2^8$
(the \textsc{Matlab}-function \verb!cond! returns the value \verb!4.826490e+18!).
In a simple numerical experiment, we observed that the condition number of
the discretised shape Hessian behaves as $h_N^{\alpha_h}\sigma^{\alpha_\sigma}$
with $\alpha_h = -8.7$ and ${\alpha_\sigma} = 7.2$ when $p=4$ and
$\alpha_h = -12.48$ and ${\alpha_\sigma} = 10.18$ when $p=6$.
{We believe that these condition numbers deteriorate
because we rely on radial basis functions.
Indeed, following closely \cite[Sect. 5.2]{ChLe14},
it is possible to bound from above the condition number of a mass matrix the arises
from a discretization based on compactly supported Wendland kernels,
and this upper bound reads $C h_N^{-2p}$, where $C$ is a positive constant.}
\end{numexp}

\begin{figure}[htb!]
\centering
\includegraphics{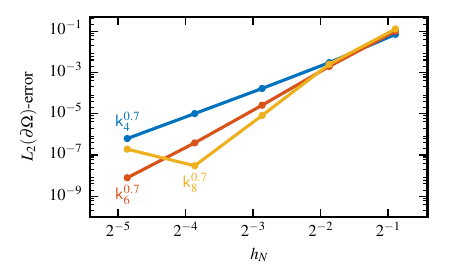}
\caption{
\cref{numexp:Xnu_errors} shows that
$\Vert \VX_{\nubf} - \widetilde\VX_{\nubf}^{h_N}\Vert_{L_2(\partial\Omega)}$
decays with the algebraic rate 4.20 (for $\ksf_4^{0.7}$), 5.98 (for $\ksf_6^{0.7}$),
and 8.20 (for $\ksf_8^{0.7}$ when $h_N>2^{-4}$).}
\label{fig:Xnu_errors}
\end{figure}

\subsection{Discrete shape Newton algorithm based on $\Vr_\Vx$}
In this section, we use the basis functions $\Vr_\Vx$s to formulate a discrete
shape Newton algorithm and test it on \cref{ex:unconstrained}.
In particular, we investigate the impact of the Newton update approximation
on the convergence rates of the shape Newton method.
To measure these convergence rates, we use the norm on the tangent spaces,
which is simply the norm of the RKHS.

Let $J$ be a twice differentiable shape function,
$\Omega\subset \VR^2$ be an initial guess, and $\Cx_N=\{\Vx_1,\ldots, \Vx_N\}$ be a
finite number points on $\partial\Omega$.
We begin by specifying how to transport the basis functions $\Vr_\Vx$
when $\Omega$ is perturbed by a geometric transformation.

\begin{definition}
Let $\Vr_{\partial \Omega,\Vx}$ denote the weakly-normal function associated with 
$\partial \Omega $ and $\Vx\in \partial \Omega$,
and let $F_t:\VR^2\to \VR^2$ be family of $C^1$-diffeomorphisms
such that $F_0=\id$. We define the transport $\FT_s$ by
\begin{equation}
\FT_s(\Vr_{\partial \Omega,\Vx}) \coloneqq \Vr_{F_s(\partial \Omega),F_s(\Vx)},
\end{equation}
and extend it by linearity to
$\mathfrak \FT_s:\mathrm{span}\{\Vr_{\partial \Omega,\Vx},\; \Vx\in \partial \Omega\} \to
\mathrm{span}\{\FT_s(\Vr_{\partial \Omega,\Vx}),\; \Vx\in \partial \Omega\}$.
By construction, $\mathfrak T_s:\mathrm{span}\{\Vr_{\partial \Omega,\Vx},\; \Vx\in \partial \Omega\}\subset [\Ch(\partial \Omega)]_{\nubf}^2 \to [\Ch(F_t(\partial \Omega))]_{\nubf}^2$. 
\end{definition}

Using this transport, we formulate the following discrete shape Newton method.
Set $\Omega_{0} = \Omega$, and
$\widetilde\Cr_N^0 \coloneqq \mathrm{span}\{\Vr^h_{\partial \Omega,\Vx}: \Vx\in \Cx_N\}$.
The optimisation algorithm proceeds as follows:
first, it computes the solution $\Vg_0\in \widetilde\Cr_N^0$ of
\begin{equation}\label{eq:newton}
\FD^2J(\Omega_0)(\Vg_0)(\varphibf) = - DJ(\Omega_0)(\varphibf) \quad \text{ for all } \varphibf\in \widetilde\Cr_N^0;
\end{equation}
then, it defines the transformation $F_0(\Vx) \coloneqq \Vx + \Vg_0(\Vx)$ and the new domain
$\Omega_1\coloneqq F_0(\Omega_0)$; finally, it defines the space $\widetilde\Cr_N^1$ by transporting $\widetilde\Cr_N^0$
with $F_0$, that is,
\begin{equation}
\widetilde\Cr_N^1 \coloneqq \mathrm{span}\{\Vr^h_{F_0(\partial \Omega),F_0(\Vx)}: \Vx\in \Cx_N\}\,.
\end{equation}
This procedure is repeated until convergence.

\begin{remark}
The shape $\Omega_1$ that results by updating $\Omega_0$ with $\Vg_0$
has the same regularity of $\Vg_0$. Therefore, from a theoretical point of view,
it may be necessary to regularise the update $\Vg_0$ to guarantee that
\cref{ass:one} remains fulfilled during the optimisation process. However,
such a regularization is not necessary in practice when
one employs the Wendland kernels from \cref{ex:kernel}. In this case,
regularity is lost only at the points $\{\Vx \pm \sigma : \Vx\in \Cx_N\}$,
whereas to construct the approximate weakly-normal function $\Vr_\Vx^h$s it
is necessary to evaluate $\nubf$ and $\taubf$ only at $\Cx_N$.
Therefore, if $\{\Vx \pm \sigma : \Vx\in \Cx_N\}\cap \Cx_N=\emptyset$,
the discrete algorithm does not perceive the loss of regularity of the domain.
\end{remark}

\begin{numexp}\label{numexp:Newton}
We test this discrete shape Newton method by performing 5 optimisation steps
to solve \cref{ex:unconstrained} with $f(\Vx) = \vert \Vx \vert^2 -1$ and
\begin{equation}
\Omega = \{ \gamma(\phi) = (0.2 + 1.15\cos(\phi), 0.15 + 0.9\sin(\phi)): \phi \in [0, 2\pi)\}\,.
\end{equation}
The (approximate) weakly-normal basis functions $\Vr^h_\Vx$ are constructed with the kernel $\ksf_4^{0.7}$
on the sets
\begin{equation}
\Cx_N = \{\gamma(2\pi\ell /N): \ell = 1,\dots, N\} \quad \text{with}\quad N=2^{4},2^{5},\dots,2^{8}\,.
\end{equation}

\orange{To simplify the implementation, the shape iterates
$\{\Omega_{\ell+1} = \Omega_\ell + \Vg_\ell(\Omega_\ell)\}_{\ell=0}^4$ are
replaced by highly-accurate interpolants computed with Chebfun \cite{chebfun}.
Normal and tangential vector fields $\nubf$ and $\taubf$, as well as the curvature term $\kappa$,
are approximated evaluating their corresponding analytic formulas
with these interpolants.}

%Following \cite{sturm3, Schulz1}, we neglect the curvature term in \cref{eq:newton_infinite_example}.
\orange{To replicate the results from \cite{sturm3, Schulz1},
we approximate Newton descent direction neglecting the curvature term in
\cref{eq:newton_infinite_example}. Note that a counterpart of \cref{prop:Xnuhquasioptimal}
can be proved, provided that $\nabla f\cdot \nubf >0$ (which is the case in these experiments).}
In \cref{fig:newton}, we plot the evolution of the shape iterates $\{\Omega_\ell\}_{\ell=0}^2$
for $N = 2^4$ (top left) and for $N = 2^5$ (top right),
and, for each $N$, we plot the values of the sequence
$\{\Vert \Vg_{\ell}\Vert_{[\Ch]^2}\}_{\ell=0}^4$ (bottom left)
and the measure of the quadratic rate of convergence 
$\{\Vert \Vg_{\ell+1}\Vert_{[\Ch]^2}/\Vert \Vg_{\ell}\Vert_{[\Ch]^2}^2\}_{\ell=0}^3$
(bottom right).

We observe that the sequence of shapes converges quickly to the minimiser
(a circle of radius 1 centered in the origin), and that the discretisation error
on the retrieved shape is not visible for $N\geq 2^5$.
We also observe that the discretised shape Newton method converges quadratically,
if sufficiently many weakly-normal vector fields are employed.
\end{numexp}
\begin{figure}[htb!]
\includegraphics[width=0.48\linewidth]{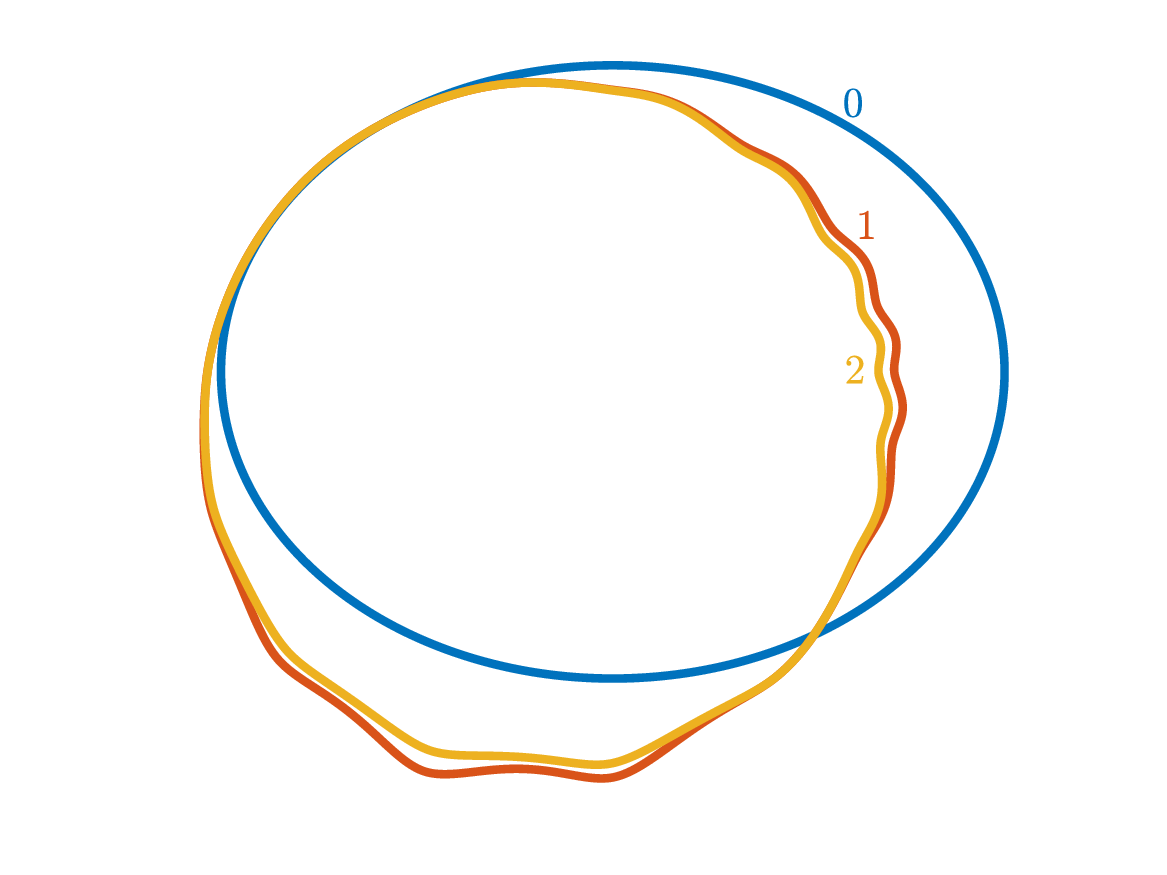}
\hfill
\includegraphics[width=0.48\linewidth]{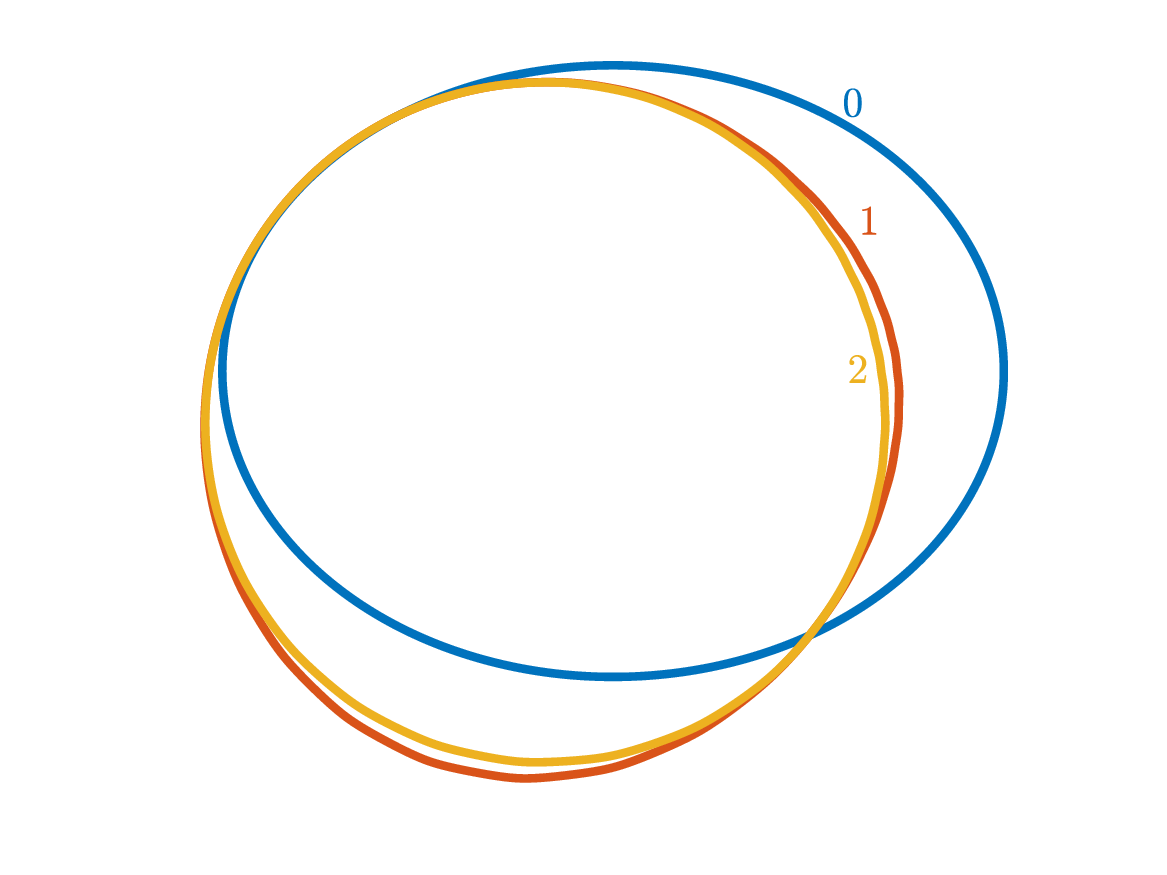}
\\
\includegraphics{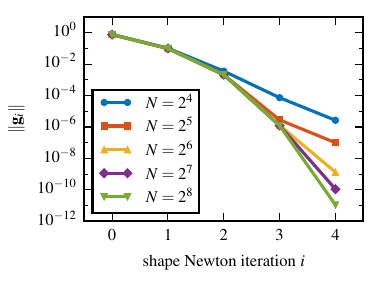}
\hfill
\includegraphics{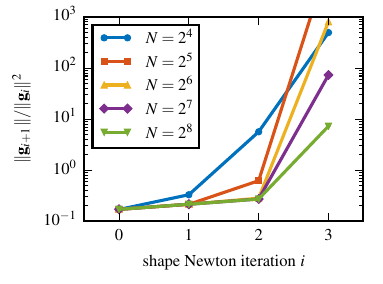}
\caption{\cref{numexp:Newton} shows that the discretised shape Newton method
converges quadratically, if sufficiently many weakly-normal vector fields are employed.}
\label{fig:newton}
\end{figure}
\begin{remark}
We have repeated the previous experiment including the curvature
term in the Newton equation. Surprisingly, we observed that the 
\orange{quadratic rate of convergence deteriorates to superlinear}
%convergence rates of the algorithm deteriorates drastically,
although the sequence of shape
iterates converge to the correct minimiser.
{To further investigate this unexpected behavior, we consider a
simpler scenario that can be solved analytically. Let the initial domain $\Omega_0$
be a disc of radius $r_0$ centered in the origin. In this case,
the shape iterates are concentric discs of radii
\begin{equation}\label{eq:concentric}
r_{\ell+1} = r_{\ell} -  \frac{r_{\ell}^2 -1}{2r_{\ell} + \kappa_\ell (r_\ell^2-1)}\,, \quad \ell = 0,1, \ldots, 
\end{equation}
where $\kappa_\ell = r_{\ell}^{-1}$ is the curvature of the $\ell$th disc. Equation
\eqref{eq:concentric} can be rewritten as
\begin{equation}\label{eq:r1}
  r_{\ell +1} = \frac{2r_{\ell}^3}{3 r_{\ell}^2 - 1}\,.
\end{equation}
From this equation, we see that if $r_{\ell}<1/\sqrt{3}$, then $r_{\ell +1}<0$, which is not feasible.
On the other hand, if $r_{\ell}>1/\sqrt{3}$, then $r_{\ell +1}>1$. Moreover, $r_{\ell+1}<r_\ell$ if $r_{\ell}>1$, which implies that $(r_\ell)$ decreases monotonically and {converges to one}.
By direct computation, we can show that
\ben\label{eq:rate1}
\vert 1 - r_{\ell +1} \vert = \left\vert\frac{(2r_{\ell}+1)(r_{\ell}-1)}{3r_{\ell}^2-1}\right\vert \vert 1-r_{\ell}\vert\,,
\een
which implies that the error is decreasing only if 
%$r_\ell > \fractx{1+\sqrt{41}}{10 }\approx 0.740312$ 
$r_\ell > \fractx{1+\sqrt{41}}{10} > \fractx{1+\sqrt{36}}{10} = 0.7$.
Since $r_{\ell}>1/\sqrt{3}$ implies $r_{\ell +1}>1$,
the sequence $\{r_\ell\}$ converges quadratically for every
initial radius $r_{0}>1/\sqrt{3}$ after the first iteration. 
%and it always converges quadratically after the first iteration.
However, we obtain a better algorithm if we set $\kappa_\ell = 0$, 
because equation \eqref{eq:concentric} becomes 
\begin{equation*}
  %r_{\ell+1} = \frac{r_\ell^2+1}{2r_\ell} = \frac12 (r_\ell + \frac{1}{r_\ell})\,,
  r_{\ell+1} = \frac12 \left(r_\ell + \frac{1}{r_\ell}\right)\,,
\end{equation*}
which converges quadratically for any $r_0>0$. To be more precise, in this case
\ben\label{eq:rate2}
|r_{\ell+1}-1| = \left\vert \frac{r_\ell-1}{2r_\ell}\right\vert|r_\ell-1|,
\een
which converges faster than
\eqref{eq:rate1} because $\fractx{2r_\ell +1}{3r_\ell^2-1}>\fractx{1}{2r_\ell}$ for
$r_\ell >\fractx{1}{\sqrt{3}}$.% is equivalent to $(r_\ell+1)^2 >0$. 
This example shows that the denominator in \eqref{eq:concentric} plays a key role in the
convergence analysis, and it may be for the same reason that the shape Newton algorithm 
without curvature term performed better in our previous numerical experiment.
}
\end{remark}
%\begin{remark}
%It is interesting to note that the shape Newton method applied with the initial shape as unit disc can be interpreted as a standard Newton method applied to the function $f(r):=\int_{B_r(0)} |x|^2-r_0\;dx$. In fact using polar coordinates we compute $f(r)=\fractx{\pi}{2}r_0 r^4-\pi r_0 r^2$ and hence $f'(r)=2\pi r^3-2\pi r_0 r$ and $f''(r) = 6\pi r^2 - 2\pi r_0$. Now it is readily checked that 
%\ben
%r_{\ell+1} = r_{\ell} - \frac{f'(r_\ell)}{f''(r_\ell)}, \quad\ell = 1,2,\ldots, 
%\een
%is indeed equivalent to \eqref{eq:concentric}.
%\end{remark}

{
\begin{numexp}\label{numexp:Newton_fcool}
To further highlight the effect of the discretisation parameter $N$
on approximate shape Newton methods, we consider the shape optimization problem
from \cref{ex:unconstrained} with $f(\Vx) = f(x,y) = (x^2+16y^2 -1)(x^2+(y-0.45)^2-0.04)$. The
zero-level set of this function is the union of a disc and an ellipse, and it has a cusp at
(0, 0.25) (see \cref{fig:newton_fcool}). Therefore, the optimal shape is not even Lipschitz,
and retrieving it by solving the optimization problem is particularly challenging.

We perform 15 discrete shape Newton steps
with the same numerical setup of \cref{numexp:Newton}
In \cref{fig:newton_fcool}, we plot the evolution of
shape iterates $\{\Omega_\ell\}_{\ell=0}^{15}$ for $N = 2^6$ (left) and for $N = 2^8$ (right).
These plots clearly show that increasing $N$ allows retrieving a much better approximation
of the optimal shape. Note that, after 15 steps, the algorithm has fully converged to an approximate optimum.

Note that these shape iterates have been computed neglecting the curvature term in
\cref{eq:newton_infinite_example}. Similarly to \cref{numexp:Newton}, we have
observed that the performance of shape Newton's method deteriorates if the curvature
term is taken into account. In particular, it is necessary both to regularized Newton's equation
to enforce coercivity \cite{HiRi03} and to switch to damped Newton's method
to ensure that every shape iterate is feasible. 

\end{numexp}
}

\begin{figure}[htb!]
\includegraphics[width=0.49\linewidth]{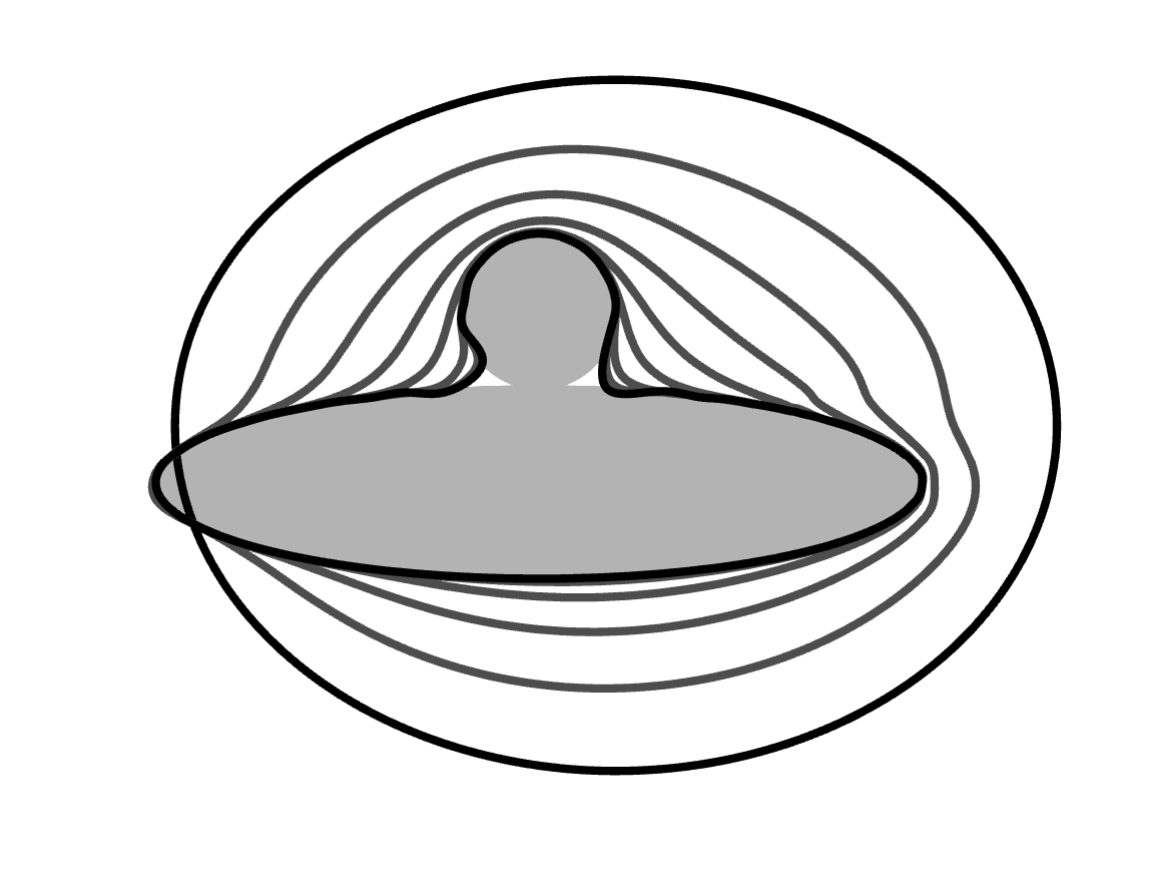}
\hfill
\includegraphics[width=0.49\linewidth]{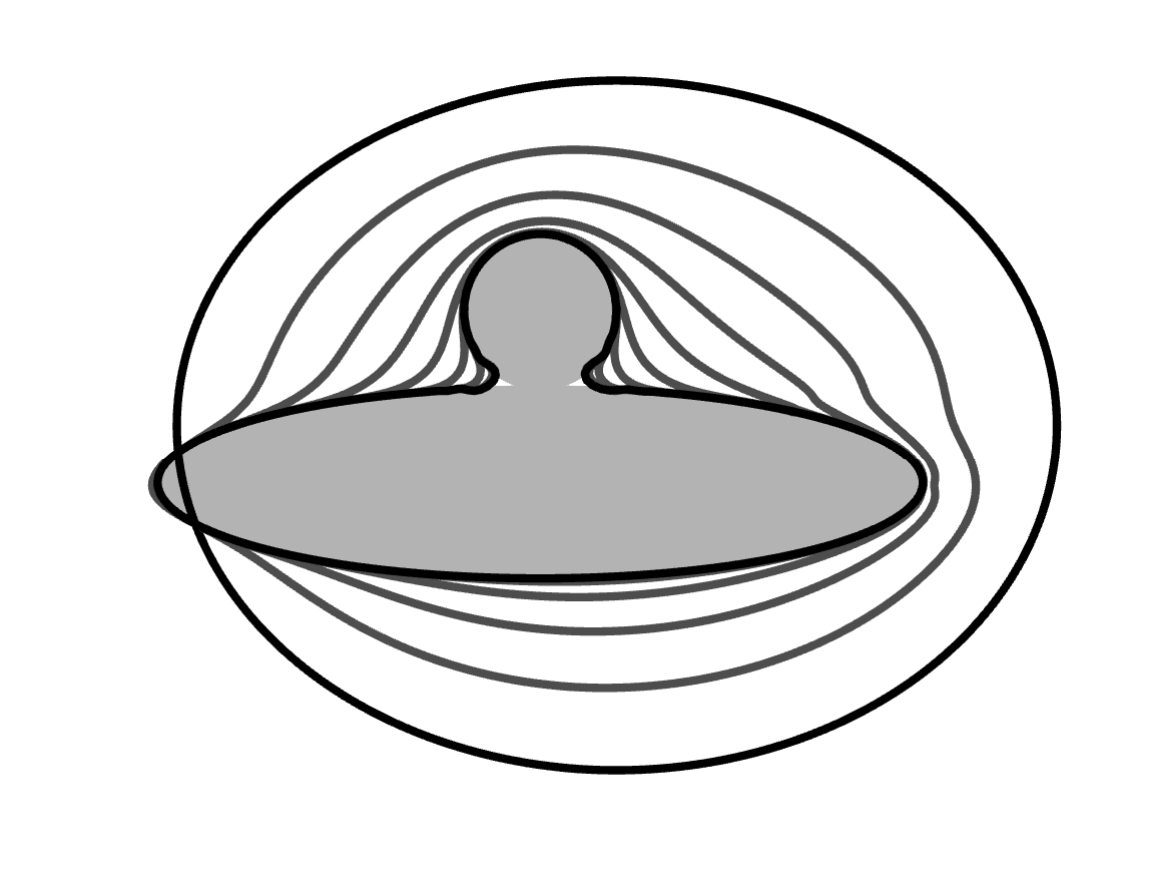}
\caption{\cref{numexp:Newton_fcool} shows that increasing the discretisation parameter
$N$ allows retrieving a much better approximation of the optimal shape. In these plots, the black lines
denote the initial and the final shape, whereas lighter lines denote intermediate shape iterates
and the gray area denotes the zero-level set of $f$.}
\label{fig:newton_fcool}
\end{figure}

\section{Conclusion}
In the first part of this work, we have introduced the class of
weakly-normal vector fields, which are defined as solutions of saddle point
variational problems in RKHSs.
Besides investigating their properties, we have discussed their
approximation and proved the related convergence rates.
These vector fields can be used to discretise
shape Newton methods because they have purely normal components.

In the second part of this work, we have formulated a discrete shape Newton
method based on these weakly-normal vector fields and, for a specific test
case, proved that the discrete shape Newton direction converges to the
continuous one in the metric associated with the shape Hessian.
Finally, we have showed that the discrete shape Newton method
exhibits quadratic convergence if the discretisation error
is sufficiently small.

{We have observed that our approach leads to
the matrices whose condition number deteriorates as the number of
degrees of freedom increases. This is a common issue of discretizations
based on radial basis functions. In \cite{ChLe14}, the authors describe
a multilevel residual correction algorithm that leads to approximation matrices
with bounded condition number. Although 
there is a debate on its convergence properties \cite{We18},
this algorithm performed well in numerical experiments, and it would
be interesting to investigate whether a similar technique could be used
in our context.
}

The weakly-normal vector fields have another property that has not
been exploited in this work: although defined on the boundary
$\partial \Omega$, they can be straightforwardly extended to $\VR^2$.
This implies that the discrete shape Newton method remains well-posed
even when formulated with volume-based expressions of the shape
derivative and the shape Hessian. This is a great advantage for
PDE-constrained shape optimisation problems,
in which case volume based formulas are easier to derive and impose
less regularity requirements on the solution of the PDE-constraint.
However, using volume-based formulas requires the integration of
the weakly-normal vector fields on $\Omega$, which introduces an
additional complexity in the implementation as well as additional sources
of discretisation error that need to be analysed.
For this reason, we postpone to a subsequent work the study of the shape
Newton method based on weakly-normal vector fields  in the framework of
PDE-constrained shape functionals.

{
Finally, we mention that certain shape optimization problems present an additional
difficulty: their second-order shape derivative is continuous
on a function space and coercive on a weaker one
%and coercive on different function spaces 
(this is commonly known as the ``norm gap''). When this is the case, the shape optimization
problem under consideration is not well posed \cite{EpHa12}.
Although discretisation is a form of regularization, in the case of an ill-posed problem
one should pay additional care to the discretisation level. The weakly-normal vector fields
introduced in this work may carry extra regularization properties because their smoothness
can be tuned by choosing appropriate reproducing kernels. We defer the investigation of this
interesting aspect to future work.
}

\section*{Appendix}
In this appendix, we show that $\mathcal H(\partial \Omega)\subset C^0(\partial \Omega)$
when $\ksf \in C^0(\VR^2\times\VR^2)$. We believe that this result is not new, but we
could not find its proof in the literature.
\begin{lemma}\label{lem:kernel_continous}
Let $\ksf$ be a symmetric positive-definite kernel on $\VR^2$. Denote by $\mathcal H(\partial \Omega)$ the RKHS associated with the restriction of $\ksf$ to $\partial \Omega$. If $\ksf \in C^0(\VR^2\times\VR^2)$,
then $\mathcal H(\partial \Omega)\subset C^0(\partial \Omega)$.\end{lemma}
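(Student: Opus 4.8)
The plan is to exploit the reproducing property of the restricted kernel together with the continuity of $\ksf$ to show that every $f\in\Ch(\partial\Omega)$ is continuous on $\partial\Omega$. First I would fix $f\in\Ch(\partial\Omega)$ and two points $\Vx,\Vy\in\partial\Omega$, and use the reproducing property of the restricted kernel to write $f(\Vx)-f(\Vy)=(f,\ksf(\Vx,\cdot)-\ksf(\Vy,\cdot))_{\Ch}$. Applying the Cauchy--Schwarz inequality then yields
\begin{equation*}
|f(\Vx)-f(\Vy)|\le \|f\|_{\Ch}\,\|\ksf(\Vx,\cdot)-\ksf(\Vy,\cdot)\|_{\Ch}\,.
\end{equation*}

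The key step is to control the second factor. Using the reproducing property twice and the symmetry of $\ksf$, I would expand
\begin{equation*}
\|\ksf(\Vx,\cdot)-\ksf(\Vy,\cdot)\|_{\Ch}^2
=\ksf(\Vx,\Vx)-2\ksf(\Vx,\Vy)+\ksf(\Vy,\Vy)\,.
\end{equation*}
Since $\ksf\in C^0(\VR^2\times\VR^2)$, letting $\Vy\to\Vx$ sends the right-hand side to $\ksf(\Vx,\Vx)-2\ksf(\Vx,\Vx)+\ksf(\Vx,\Vx)=0$. Hence $\|\ksf(\Vx,\cdot)-\ksf(\Vy,\cdot)\|_{\Ch}\to 0$ as $\Vy\to\Vx$, and the displayed Cauchy--Schwarz bound forces $|f(\Vx)-f(\Vy)|\to 0$. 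This establishes continuity of $f$ at every $\Vx\in\partial\Omega$, so $f\in C^0(\partial\Omega)$, proving the inclusion $\Ch(\partial\Omega)\subset C^0(\partial\Omega)$.

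There is essentially no serious obstacle here: the argument is the classical fact that a continuous kernel has a native space of continuous functions. The only points requiring a moment's care are that the restriction of $\ksf$ to $\partial\Omega\times\partial\Omega$ inherits continuity from $\ksf\in C^0(\VR^2\times\VR^2)$, which is immediate, and that the reproducing property must be applied to the restricted kernel on $\partial\Omega$ (which defines $\Ch(\partial\Omega)$) rather than to the ambient kernel on $\VR^2$.
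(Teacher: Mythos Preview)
Your proposal is correct and follows essentially the same approach as the paper: use the reproducing property together with Cauchy--Schwarz to bound $|f(\Vx)-f(\Vy)|$ by $\|f\|_{\Ch}\,\|\ksf(\Vx,\cdot)-\ksf(\Vy,\cdot)\|_{\Ch}$, then expand the squared norm as $\ksf(\Vx,\Vx)-2\ksf(\Vx,\Vy)+\ksf(\Vy,\Vy)$ and invoke continuity of $\ksf$. The only minor addition in the paper is a reference to the equivalence of the geodesic and Euclidean metrics on $\partial\Omega$, but this is inessential to the argument.
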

\begin{proof}
For a generic $f\in \Ch (\partial \Omega)$,
the reproducing kernel properties of $\mathcal H(\partial \Omega)$
implies
\ben\label{eq:f_difference}
\begin{split}
|f(\Vx)-f(\Vy)| &= |(\ksf(\Vx,\cdot),f )_\Ch - (\ksf(\Vy,\cdot),f )_\Ch| \\
& = |(\ksf(\Vx,\cdot)-\ksf(\Vy,\cdot),f)_\Ch|  \\
& \le \|\ksf(\Vx,\cdot)-\ksf(\Vy,\cdot)\|_{\Ch}\|f\|_{\Ch}
\end{split}
\een
for all $\Vx,\Vy \in \partial \Omega$. Moreover,
\ben\label{eq:k_difference}
\|\ksf(\Vx,\cdot)-\ksf(\Vy,\cdot)\|_{\Ch}^2 = \ksf(\Vx,\Vx) + \ksf(\Vy,\Vy) - 2\ksf(\Vx,\Vy)\,.
\een
Therefore, the right hand side of \cref{eq:k_difference} goes to zero as $\Vx$ goes to $\Vy$
because $\ksf\in C^0(\VR^2\times\VR^2)$. In view of estimate \cref{eq:f_difference} and \cref{eq:distance},
this shows that $f$ is continuous on $\partial \Omega$.
\end{proof}

\bibliographystyle{siamplain}
\bibliography{refs}

\end{document}